\documentclass[11pt,a4paper,reqno]{amsart}

\usepackage{amsmath,mathrsfs,ragged2e,multirow,array,booktabs}
\usepackage{color}
\usepackage{amsfonts}
\usepackage{amscd}
\usepackage{latexsym}
\usepackage{amssymb}  
\usepackage{amsthm}
\usepackage{hyperref}
\allowdisplaybreaks
\usepackage{graphicx, graphics}
\usepackage{subfig,float}
\usepackage[table]{xcolor}
\usepackage{lineno}
\numberwithin{figure}{section}

\numberwithin{table}{section}
\numberwithin{equation}{section}
\newtheorem{theorem}{Theorem}[section]
\newtheorem{lemma}{Lemma}[section]

\newtheorem{remark}{Remark}[section]

\graphicspath{{Plots/Error_graph/},{Plots/Blowup_graph/},{Plots/Stability_graph/}}

\allowdisplaybreaks

\newcommand{\ust}{u^*}
\newcommand{\vst}{v^*}
\newcommand{\wst}{w^*}

\newcommand{\vo}{\bar{u}}

\newcommand{\ut}{u(\cdot,t)}
\newcommand{\vt}{v(\cdot,t)}
\newcommand{\wt}{w(\cdot,t)}

\newcommand{\us}{u(\cdot,s)}
\newcommand{\vs}{v(\cdot,s)}
\newcommand{\ws}{w(\cdot,s)}

\newcommand{\gu}{{\nabla u}}
\newcommand{\gv}{{\nabla v}}
\newcommand{\gvo}{{\nabla \vo}}
\newcommand{\gw}{{\nabla w}}

\newcommand{\lv}{{\Delta v}}
\newcommand{\lu}{{\Delta u}}

\newcommand{\mgu}{{|\gu|}}
\newcommand{\mgv}{{|\gv|}}
\newcommand{\mgw}{{|\gw|}}
\newcommand{\mgvo}{{|\gvo|}}

\newcommand{\nru}{\big\|u\big\|}

\newcommand{\nrw}{\big\|w\big\|}

\newcommand{\nut}{\big\|\ut\big\|}
\newcommand{\nvt}{\big\|\vt\big\|}
\newcommand{\nwt}{\big\|\wt\big\|}

\newcommand{\ngut}{\|\nabla\ut\|}

\newcommand{\ssup}{\sup\limits_{t\in(0,\tmax)}}
\newcommand{\tmax}{T_{\mathrm{max}}}
\newcommand{\tin}{t_0}
\newcommand{\into}{\int_{\Omega}}

\newcommand{\intt}{\int^t_0}
\newcommand{\intti}{\int^t_{\tin}}
\newcommand{\inti}{\int_0^\infty}
\newcommand{\ints}{\int_{\Omega}}
\newcommand{\ds}{\mathrm{d}s}
\newcommand{\dt}{\frac{\mathrm{d}}{\mathrm{d}t}}

\newcommand{\phvo}{\phi(\vo)}
\newcommand{\povo}{\phi^{'}(\vo)}
\newcommand{\ptvo}{\phi^{''}(\vo)}

\newcommand{\bup}{v^p}
\newcommand{\upo}{v^{p-1}}

\newcommand{\lis}{\mathcal{L}^{\infty}(\Omega)}
\newcommand{\los}{\mathcal{L}^{1}(\Omega)}
\newcommand{\lts}{\mathcal{L}^{2}(\Omega)}
\newcommand{\ltps}{\mathcal{L}^{\frac{2}{p}}(\Omega)}
\newcommand{\lps}{\mathcal{L}^{p}(\Omega)}

\newcommand{\lrs}{\mathcal{L}^{r}(\Omega)}

\newcommand{\wsq}{\mathcal{W}^{1,q}(\Omega)}
\newcommand{\cso}{\mathcal{C}^{0}(\overline{\Omega})}

\newcommand{\momeg}{|\Omega|}
\newcommand{\lros}{\mathcal{L}^{r\eta }(\Omega)}

\title[]{Global existence, blow-up behavior, and numerical simulations for a class of chemotaxis-driven \\ fish-mussel systems}

\begin{document}

%\linenumbers

\author[S. Gnanasekaran]{Gnanasekaran Shanmugasundaram}
\address[GS]{Department of Mathematics, National Institute of Technology Tiruchirappalli, Tamilnadu 620015, India}
\curraddr{}
\email{sekaran@nitt.edu}
\thanks{}

\author[J. Saha]{Jitraj Saha$^*$}
\address[JS]{Department of Mathematics, National Institute of Technology Durgapur, West Bengal 713209, India}
\curraddr{}
\email{jsaha.maths@nitdgp.ac.in}
\thanks{$^*$Corresponding author}

\keywords{Classical solution, Global existence, Blow-up, Finite element method}

\subjclass[2020]{35A01, 35A09, 35B44, 35Q92, 65N30}

\begin{abstract}
	In this work, we investigate a chemotaxis-driven fish--mussel ecosystem model described by a coupled system of partial differential equations subject to homogeneous Neumann boundary conditions. Under suitable assumptions on the system parameters and initial data, we establish the global existence of classical solutions by employing semigroup methods together with a priori estimates. We also examine the possible blow-up behavior of solutions in a three-dimensional domain. To support the theoretical analysis, a finite element method is developed for the numerical approximation of the system, and convergence studies based on mesh refinement are carried out to verify the accuracy and stability of the proposed numerical scheme. Furthermore, numerical simulations illustrating the blow-up behavior of solutions in the computational domain are presented.
\end{abstract}

\maketitle

\section{Introduction and motivation}
\justifying
Aquaculture plays an important role in global food production and sustainable economic development, with ecosystem stability strongly influenced by nutrient availability, water quality, and species interactions. However, modern fish farming systems are increasingly affected by industrial discharge, agricultural runoff, and organic waste accumulation, which significantly alter nutrient dynamics and aquatic conditions \cite{gazi2009}. Excessive nutrient enrichment, particularly from nitrogen and phosphorus, can cause eutrophication, leading to algal blooms, oxygen depletion, water quality deterioration, and harmful impacts on fish populations and ecosystem stability. In addition, toxic algal blooms may transfer harmful substances through the food chain, posing ecological and human health risks. In contrast, oligotrophic water bodies are nutrient-deficient systems characterized by low algal productivity, high dissolved oxygen levels, and relatively clear and stable aquatic environments.

Early studies on aquaculture dynamics primarily relied on ordinary differential equation models to describe the temporal interactions between nutrients and aquatic populations. Subsequently, these models were extended to include additional species, such as fish and mussels, providing a more comprehensive representation of ecosystem processes. These studies demonstrated that nutrient availability and species interactions are key factors governing system stability and oscillatory behavior. An early contribution in this direction was the two-component nutrient--population model proposed by Ardito et al.~\cite{Ardito1988}. To address eutrophication problems commonly encountered in aquaculture systems, mussels were later incorporated into the modeling framework. Owing to their filter-feeding capacity, mussels remove suspended organic matter, improve water quality, and enhance oxygen availability within the ecosystem. Based on these ecological considerations, Gazi et al.~\cite{gazi2009} formulated a three-component fish farm model involving nutrients, fish, and mussels. The authors investigated the boundedness, local stability, and global stability of the proposed system. Furthermore, they analyzed the Hopf bifurcation of the model near the coexistence equilibrium point by considering the time delay as a bifurcation parameter. Numerical simulations were also carried out to validate the analytical findings. Their study demonstrated that the external food supply plays a crucial role in controlling the dynamics of the system. Subsequently, Arunkumar et al. \cite{MArun2024} analyzed the model using the Homotopy Analysis Method (HAM) and compared with existing Laplace–Adomian Decomposition Method (LADM) \cite{sambath2016} and the fourth-order Runge–Kutta method (RK4M). However, these studies were restricted to spatially homogeneous ordinary differential equation models and did not account for directed population movement induced by environmental gradients.

In realistic aquatic environments, fish populations tend to migrate toward nutrient-rich regions, leading to aggregation phenomena that can be effectively described through chemotaxis, namely the directed movement of organisms in response to chemical stimuli. Motivated by this limitation, we formulate a partial differential equation model incorporating chemotactic effects to investigate the spatiotemporal dynamics of nutrients, fish, and mussel populations. 
\begin{align}
	\left\{
	\begin{array}{ll}
		\tau u_{t} =\Delta u-u\big(\alpha +\beta v+\gamma w\big)+f(\cdot,t), \hspace{1cm}& \text{in}\:\: \Omega, t>0,\\
		v_{t}  =\Delta v-\chi\nabla\cdot\big(v\nabla u\big) +\sigma_1v\big(1- v+ u\big)- \delta_1 v, &\text{in}\:\: \Omega, t>0,\\
		\tau w_{t}  =\Delta w+\sigma_2uw-\delta_2w, &\text{in}\:\:\Omega, t>0,
	\end{array}
		\right.\label{1.1}
\end{align}
with initial condition
\begin{align}
	\tau u(\cdot, 0)=\tau u_{0}, \:v(\cdot, 0)=v_{0}, \: \tau w(\cdot, 0)=\tau w_{0}, \quad \text{in}\:\:\Omega,\label{1.2}
\end{align}
and homogeneous Neumann boundary condition
\begin{align}
	\frac{\partial u}{\partial\nu}=\frac{\partial v}{\partial\nu}=\frac{\partial w}{\partial\nu}=0,\quad \text{on}\quad\partial\Omega, t>0.\label{1.3}
\end{align}
Here, $\Omega\subset\mathbb{R}^n$ is a bounded domain with smooth boundary $\partial\Omega$, $\nu$ denotes unit outward normal vector on $\partial\Omega$, and $\tau\in{0,1}$. The functions $u(\cdot,t)$, $v(\cdot,t)$, and $w(\cdot,t)$ represent nutrient concentration, fish population density, and mussel population density, respectively. Positive constants $\alpha$, $\beta$, and $\gamma$ describe nutrient loss through natural outflow or sedimentation, nutrient uptake by fish, and nutrient uptake by mussels, respectively. The function $f(\cdot,t)$ represents an external nutrient input that may vary in both space and time. Trophic status of the aquatic ecosystem depends strongly on the magnitude of $f$: low nutrient input corresponds to an oligotrophic state, whereas excessive nutrient loading may induce eutrophication. Parameters $\sigma_1$ and $\delta_1$ denote intrinsic growth rate and mortality rate of the fish population, respectively. The chemotactic term $-\chi\nabla\cdot(v\nabla u)$ models directed movement of fish toward regions with higher nutrient concentration. The parameters $\sigma_2$ and $\delta_2$ denote the nutrient-to-biomass conversion efficiency and mortality rate of the mussel population, respectively. Through the consumption of particulate organic matter and phytoplankton generated by excess nutrients, mussels contribute to nutrient recycling and help regulate water quality. Consequently, the coupled interactions among nutrients, fish, and mussels promote ecological balance and support the long-term coexistence of species within the aquaculture system.

The initial conditions $u_0$, $v_0$ and $w_0$ are assumed to be nonnegative functions, ensuring the biological relevance of the model. Suppose that initial data satisfy
\begin{align}\label{1.4}
	\left\{
	\begin{array}{llll}
		&v_0\in\cso,\quad \mbox{with} \quad u_0 \geq 0\quad\mbox{in}\:\: \Omega,\\
		&\tau u_0, \tau w_0\in\wsq,\quad \mbox{for some}\,\, { q} >n,\quad \mbox{with} \quad \tau v_0, \tau w_0 \geq 0\quad\mbox{in}\:\: \Omega.
	\end{array}
	\right.
\end{align}
Moreover, the function $f(\cdot,t)$ satisfies
\begin{align}
f\in \mathcal{W}^{1, \infty}(\Omega\times (0, \infty)), \quad 	0\leq f\leq M,\:\: |\nabla f|\leq M,\quad \mbox{for some}\:\: M> 0.\label{1.5}
\end{align}
We briefly review existing results related to the present system. The classical Keller–Segel model provides a fundamental framework for describing chemotaxis and has been extensively studied, along with its variants, due to its significance in mathematics and biology. Recent advances on chemotaxis–consumption systems, including the existence, qualitative behavior, and pattern formation of solutions, are comprehensively surveyed in \cite{Lankit2023}.

It is worth noting that Keller and Segel \cite{keller} introduced the consumption model to describe the motion of {\it E.~coli} bacteria driven by oxygen gradients and nutrient consumption. The model is governed by the following system of partial differential equations
 \begin{align}
		 \left\{
		 \begin{array}{llll}
			u_t= \Delta u-\chi \nabla\cdot(u \gv),\\
			v_t= \Delta v- uv.
			 \end{array}
		 \right.\label{1.6}
\end{align} 
where $u$ denotes the bacterial density, $v$ represents the oxygen concentration, and $\chi>0$ is the chemotactic sensitivity coefficient. Model predicts the formation of traveling bacterial bands when the chemotactic sensitivity exceeds a critical threshold, a phenomenon that has also been observed experimentally. Tao \cite{ytao} established conditions ensuring the global existence of classical solutions to system \eqref{1.6} for spatial dimensions $n\geq 2$. Subsequently, Tao and Winkler \cite{taowinkler} investigated the existence of global weak solutions for $n\geq 3$ and further proved convergence of solutions toward spatially homogeneous equilibria as $t\to\infty$. Later, Zhang and Li \cite{zhangli} refined the criteria for global classical solvability in two-dimensional domains. They demonstrated that the condition proposed in \cite{ytao} remains essential for dimensions $n\geq 3$ and additionally established the exponential convergence of solutions as time progresses. Furthermore, Jiang et al. \cite{jiangwu} analyzed the blow-up behavior of system \eqref{1.6} in three spatial dimensions using a kinetic reformulation approach under the assumption that blow-up occurs. Their results generalized several previously known blow-up criteria and also provided insights into the local non-degeneracy of blow-up points. Baghaei and Khelghati \cite{kbaghaei} established the global existence of classical solutions to system \eqref{1.6} for various ranges of diffusion parameters. %A generalized form of system \eqref{1.6}, incorporating nonlinear diffusion and chemotactic sensitivity, is studied by Fan and Jin \cite{fan}. The chemotaxis system \eqref{1.7} with an additional nonlinear source term studied by Baghaei and Khelghati \cite{kbaghaei2}, \cite{xzhao}, Jiang and Han \cite{kjiang}

Fuest \cite{mfuest} investigated the existence and long-term behavior of solutions to the following chemotaxis system
 \begin{align*}%\label{1.9}
		 \left\{
		 \begin{array}{llll}
			 u_t= \Delta u-\chi \nabla\cdot(u \gv),\\
			 v_t= \Delta v- vw,\\
			 w_t= -\delta w+u.
			 \end{array}
		 \right.
\end{align*} 
In this model, additional variable $w$ describes the accumulation of a signaling or memory-related substance generated by the cell density $u$. It was shown that the system admits globally bounded solutions whenever either $n\leq 2$ or $\|v_0\|_{\lis}\leq \frac{1}{3n}$. Moreover, the solutions were proved to converge toward a spatially homogeneous equilibrium state as $t\to\infty$. Subsequently, Liu et al. \cite{yliu} extended these results by incorporating a logistic-type source term satisfying $f(u)\leq \mu(u-u^r)$ for dimensions $n\geq 2$.  Frassu and Viglialoro \cite{sfrassu} further refined and partially generalized the aforementioned results to higher-dimensional settings with $n\geq 5$. Subsequently, Chiyo et al. \cite{YChiyo2024} improved the results in \cite{sfrassu} and extended the analysis to the case where the chemotactic sensitivity $\chi$ depends on $v$. For further developments and related studies concerning this model, we refer the reader to \cite{jxiang}.

%Wang {\it et al.} \cite{jwang} investigated the global existence and asymptotic behavior of solutions to the following predator--prey system with indirect prey-taxis
% \begin{align}
%\left\{
% \begin{array}{llll}
% u_t=d_1 \Delta u- \nabla\cdot(\chi(w) u \nabla w)+bug(v)-uh(u),\\
% w_t= d_2\Delta w-\mu w+rv,\\
% v_t= d_3\Delta v+f(v)-ug(v).
% \end{array}
%\right.\label{1.10}
%\end{align} 
%where $u$ and $v$ denote the predator and prey densities, respectively, while $w$ represents the indirect signal mediating the taxis mechanism. The authors established the global existence of solutions and further derived precise convergence rates toward equilibrium states. In contrast to prey-taxis mechanisms, Ahn and Yoon \cite{iahn} studied a related system involving indirect predator-taxis. By constructing a suitable Lyapunov functional, they obtained stability results for spatial dimensions satisfying $n\leq 2$. Furthermore, in the presence of nonlinear diffusion, the existence of classical solutions was established in \cite{jxing2021} under appropriate assumptions on the model parameters. The authors also analyzed the asymptotic behavior of solutions under various predation mechanisms and interaction conditions.

Hu and Tao \cite{hutao} considered the following system
\begin{equation}\label{1.10}
	\left\{
	\begin{array}{llll}
		u_t= \Delta u-\chi \nabla\cdot(u \gv),\\
		\tau v_t= \Delta  v+\alpha w-\beta v-\gamma uv, \\
		\tau w_t= \Delta w-\delta uw+\mu w(1-w),
	\end{array}
	\right.
\end{equation}
with $\tau=\alpha=\beta=\gamma=\delta=\mu=1$. The authors investigated the well-posedness, solvability, and qualitative properties of classical solutions in two-dimensional domains. More precisely, they proved that, for any $\chi>0$ and sufficiently regular initial data, solutions remain globally bounded. Furthermore, when $\chi$ is sufficiently small and $\frac{1}{|\Omega|}\int_\Omega u_0<1$, convergence to equilibrium states as $t\to\infty$ was established, while the asymptotic behavior corresponding to $\frac{1}{|\Omega|}\int_\Omega u_0\geq 1$ was also discussed.

Subsequently, Gnanasekaran et al. \cite{gnanasekaran2025} studied the cases $\tau=\{0, 1\}$, focusing on the global existence and blow-up behavior of solutions in bounded domains of $\mathbb{R}^n$, $n\geq3$, under Neumann boundary conditions. More recently, Fuentes \cite{RDFuntes2025} studied the above system \eqref{1.10} with the source term $\mu_1u^{k}-\mu_2u^{k+1}$ in bounded domains of $\mathbb{R}^n$ $(n\geq 3)$ under Neumann boundary conditions. The author established global existence results by distinguishing the cases $k>1$ and $k=1$. For $k>1$, boundedness was obtained under conditions depending solely on the model parameters, whereas for $k=1$, an additional restriction involving $\chi$, $\mu_2$, $n$, and the initial datum $\|v_0\|_{L^\infty}$ was required.

Numerical simulations of the Keller–Segel chemotaxis system and its variants have been widely studied over the past decades. In the finite volume setting, Chertock and Kurganov \cite{AChertock2008} developed a second-order positivity-preserving central-upwind scheme and demonstrated its effectiveness for the classical Keller–Segel model, its extensions, and haptotaxis models. Subsequently, high-order hybrid finite-volume/finite-difference schemes for the Patlak–Keller–Segel system were proposed in \cite{AChertock2018, AKurganov2014}, providing improved accuracy while preserving key solution properties. More recently, Huang et al. \cite{XHuang2024} validated these schemes through extensive numerical experiments and investigated blow-up phenomena in three-dimensional Keller–Segel models in both parabolic–elliptic and parabolic–parabolic settings.

For Keller–Segel chemotaxis and related cancer invasion models, early numerical investigations focused on finite element discretizations and their convergence properties. Zhang et al. \cite{JZhang2016} proposed a splitting mixed finite element method for chemotaxis systems and established convergence and error estimates prior to blow-up. Subsequently, Ganesan et al. \cite{SGanesan2017} developed a Galerkin finite element framework for a cancer invasion model using Crank–Nicolson time integration, providing a robust baseline for simulating coupled reaction–diffusion–taxis dynamics. %Significant advances were made toward stability and positivity preservation in cancer invasion simulations. 
Manimaran et al. \cite{JManimaran2020} and Shangerganesh et al. \cite{LShangerganesh2020} introduced finite element schemes capable of producing nonnegative, oscillation-free solutions while accurately capturing steep gradients and blow-up behavior. Niño et al. \cite{VNino2021} developed two fully discrete finite element schemes for tumor invasion models that preserve the non-negativity of discrete variables. Khaled-Abad and Salehi \cite{LJKhaled2021} proposed a weak Galerkin finite element method for chemotaxis–haptotaxis systems using weak derivatives on discontinuous spaces, ensuring stable, nonnegative, and oscillation-free solutions even in the presence of cell-density blow-up. Further developments include the work of Pérez et al. \cite{JEPerez2022}, who extended finite element approaches for tumor invasion models with improved numerical robustness.

Recently, Aswin et al. \cite{VSAswin2023} investigated parallel space–time adaptive strategies for cancer invasion models involving density-dependent diffusion and haptotaxis. Their framework combines adaptive finite element discretization, adaptive time stepping, and dynamic load balancing via domain decomposition, significantly reducing computational cost while maintaining accuracy. Zhang et al. \cite{LZhang2025} developed a high-order explicit numerical method for haptotaxis models using compact finite differences and SSP Runge–Kutta time integration, achieving fourth-order spatial and third-order temporal accuracy while preserving positivity and improving numerical stability. More recently, Cengizci et al. \cite{SCengizci2026} proposed a stabilized SUPG finite element framework enhanced with a residual-based YZ$\gamma$ discontinuity-capturing operator and Crank–Nicolson time integration within FEniCS. Their method effectively eliminates nonphysical oscillations in convection-dominated regimes, providing a robust and efficient tool for simulating haptotaxis-driven cancer invasion models.

Motivated by the aforementioned studies, we investigate the analytical and numerical properties of the proposed chemotaxis system. We establish the global existence and boundedness of classical solutions and derive conditions characterizing potential blow-up phenomena. On the numerical side, we develop a finite element scheme and analyze its convergence and consistency using FEniCSx. Numerical experiments are presented to validate the theoretical findings and illustrate the predicted blow-up dynamics. We now state the main results of this work.

%\begin{theorem}\label{t1.1}
%Let $\Omega \subset\rsn (n\geq 3)$ be a  bounded domain with smooth boundary and $\tau \in \{0,1\}$. Suppose that the constants $\alpha, \beta, \gamma, \chi, \sigma_1, \sigma_2, \delta_1, \delta_2$ all are  positive and the function $f$ full fill \eqref{1.5}.  Then  for any nonnegative initial data $(\tau u_0, v_0, \tau w_0)$ satisfying \eqref{1.4} and additionally 
%	\begin{align*}
%\max\left\{\tau\|u_0\|_{\lis}, \frac{M}{\alpha}\right\}<\min\left\{\frac{\pi}{\chi}\sqrt{\frac{2}{n}}, \frac{\delta_2}{\sigma_2}\right\}
%	\end{align*}
%	system \eqref{1.1}-\eqref{1.3} admits a unique global classical solution $(u, v, w)$ which is uniformly bounded-in-time, in the sense that	
%	\begin{align*}
%		\nut_{\wsq}+\nvt_{\lis}+\nwt_{\wsq}\leq C, \quad\quad \text{for all}\quad t\in(0,\tmax),
%	\end{align*}
%	for some $C>0$
%\end{theorem}

\begin{theorem}\label{t1.1}
	Let $\Omega\subset\mathbb R^n$ $(n\ge3)$ be a bounded domain with smooth boundary, and let $\tau\in\{0,1\}$. Assume that
	$\alpha,\beta,\gamma,\chi,\sigma_1,\sigma_2,\delta_1,\delta_2$ all are  positive and that $f$ satisfies \eqref{1.5}. Then, for any nonnegative initial data
	$(\tau u_0,v_0,\tau w_0)$ satisfying \eqref{1.4} and
	\begin{align*}
	\max\left\{
	\tau\|u_0\|_{L^\infty(\Omega)},
	\frac{M}{\alpha}
	\right\}
	<
	\min\left\{
	\frac{\pi}{\chi}\sqrt{\frac{2}{n}},
	\frac{\delta_2}{\sigma_2}
	\right\},
\end{align*}
	the system \eqref{1.1}--\eqref{1.3} admits a unique global classical solution
	$(u,v,w)$ which is uniformly bounded in time. More precisely,
	there exists a constant $C>0$ such that
\begin{align*}
	\|u(\cdot,t)\|_{\wsq}
	+\|v(\cdot,t)\|_{\lis}
	+\|w(\cdot,t)\|_{\wsq}
	\le C,
	\qquad\text{for all }t\in(0,T_{\max}),
\end{align*}
	where $q>n$.
\end{theorem}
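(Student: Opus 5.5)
The plan is to follow the standard scheme for chemotaxis-consumption systems in the spirit of Tao \cite{ytao}, adapted to the present three-component structure. First, local existence and uniqueness of a nonnegative classical solution on a maximal interval $(0,\tmax)$ follows from Banach's fixed point theorem together with the Neumann heat semigroup and parabolic Schauder theory. This also gives the extensibility criterion: if $\tmax<\infty$, then
\begin{align*}
\limsup_{t\nearrow\tmax}\Big(\nut_{\wsq}+\nvt_{\lis}+\nwt_{\wsq}\Big)=\infty .
\end{align*}
Nonnegativity of $u,v,w$ comes from the maximum principle, using $f\ge0$.

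The first real step is an $L^\infty$ bound for $u$. Since $v,w\ge0$, we have $\tau u_t\le\Delta u-\alpha u+M$. For $\tau=1$, comparison with the ODE $y'=-\alpha y+M$ gives
\begin{align*}
\|u(\cdot,t)\|_{\lis}\le \max\Big\{\|u_0\|_{\lis},\frac{M}{\alpha}\Big\}=:K .
\end{align*}
For $\tau=0$, the elliptic maximum principle gives $u\le M/\alpha=K$ directly. By hypothesis, $K<\frac{\pi}{\chi}\sqrt{2/n}$ and $K<\delta_2/\sigma_2$.

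Second, I bound $w$. The $w$-equation reads $\tau w_t\le\Delta w-(\delta_2-\sigma_2K)w$ with $\delta_2-\sigma_2K>0$. So $\|w\|_{\lis}\le\|w_0\|_{\lis}$, and in fact it decays exponentially when $\tau=1$. When $\tau=0$, the elliptic problem $-\Delta w+(\delta_2-\sigma_2u)w=0$ has strictly positive coefficient and forces $w\equiv0$.

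Third, I bound $v$ in $L^p$. This step uses the smallness $\chi K<\pi\sqrt{2/n}$, and I expect it to be the main obstacle. I would adapt Tao's weighted functional and consider $\into v^p\varphi(u)$ with
\begin{align*}
\varphi(u)=\Big(\cos\big(\theta u\big)\Big)^{-k},\qquad \theta=\frac{\pi}{2K'},
\end{align*}
where $K'>K$ is chosen so that the final quadratic form stays negative definite. The idea is to differentiate, integrate by parts, and use Young's inequality to group the terms involving $|\nabla v|^2v^{p-2}\varphi$, $v^{p-1}\nabla v\cdot\nabla u\,\varphi'$ and $v^p|\nabla u|^2\varphi''$. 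The time derivative of $u$ is substituted from its equation; when $\tau=0$ the time-derivative term is absent and the computation only gets simpler. The harmless zero-order contributions come from $-u(\alpha+\beta v+\gamma w)$, $f$, and the logistic term $\sigma_1v(1-v+u)$. They are controlled using $\|u\|_{\lis}\le K$ and $\|w\|_{\lis}\le C$, and the damping $-\sigma_1\into v^{p+1}\varphi$ absorbs the remaining $\into v^{p+1}$ terms. Since $\varphi$ is bounded above and below, this yields $\sup_t\|v\|_{\lps}\le C$ for some $p>n/2$, and then for every finite $p$. The exact constant $\pi\sqrt{2/n}$ should appear when optimizing the discriminant condition; I would first try the computation with exponent $k=p$ and check that the negativity condition reduces to $\chi K<\pi\sqrt{2/n}$ for $p$ slightly above $n/2$.

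Finally, bootstrap. Treat the $u$-equation with source $-u(\beta v+\gamma w)+f\in L^p$, $p>n/2$ large. Semigroup smoothing estimates for $\nabla e^{t\Delta}$ then give $\|\nabla u\|_{L^r}\le C$ for some $r>n$, and even $\|u\|_{\wsq}\le C$. The same argument applied to the $w$-equation gives $\|w\|_{\wsq}\le C$. For $\tau=0$, elliptic $W^{2,p}$ estimates do this job instead. With $\nabla u\in L^r$, $r>n$, a Moser iteration or the variation-of-constants formula for $v$ gives $\|v\|_{\lis}\le C$; the quadratic decay term only helps. Combined with the extensibility criterion, this yields $\tmax=\infty$ and the stated uniform bound.
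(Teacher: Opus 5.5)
Your overall architecture is the paper's: comparison bounds for $u$ and $w$, a Tao-type weighted functional $\into v^p\varphi(u)$ with a cosine weight when $\tau=1$, then semigroup bootstrapping to $L^\infty$ and the extensibility criterion. Your observation that $w\equiv0$ when $\tau=0$ is correct and sharper than Lemma~\ref{l2.3}. There are, however, two places where the argument as proposed does not go through.

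\textbf{The case $\tau=0$.} It is not true that the weighted computation gets simpler because ``the time-derivative term is absent''. The derivative $\frac{d}{dt}\into v^p\varphi(u)$ always contains $\into v^p\varphi'(u)u_t$, and $u(\cdot,t)$ still varies in time through $v(\cdot,t)$ and $f(\cdot,t)$. Yet there is no evolution equation to substitute for $u_t$: differentiating the elliptic problem in $t$ only gives a nonlocal expression driven by $v_t$. The paper uses no weight here (Lemma~\ref{l3.2}). Testing with $v^{p-1}$, the taxis term equals $(p-1)\chi\into\nabla v^p\cdot\nabla u=-(p-1)\chi\into v^p\Delta u$. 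The first equation gives $-\Delta u=f-u(\alpha+\beta v+\gamma w)\le M$, so this term is at most $(p-1)\chi M\into v^p$. Gagliardo--Nirenberg with the $L^1$ bound (or your logistic damping) then bounds $\into v^p$ for every $p>1$, with no smallness of $\chi$ needed.

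\textbf{The case $\tau=1$.} The exponent $k=p$ you propose to try does not work. Set $s=\chi u$, $\varphi=\cos(\vartheta s)^{-k}$, $z=\tan(\vartheta s)\ge0$ and $y=\varphi'/\varphi=k\vartheta z$. Nonpositivity of the gradient form with margin $\epsilon$ means $\bigl((p-1)-2y\bigr)^2\le4(p-1)(1-\epsilon)\bigl(\frac1p(y'+y^2)-y\bigr)$, which reads $c_2z^2+c_1z+c_0\ge0$ with
\begin{align*}
c_2&=\frac{4k\vartheta^2}{p}\Bigl((p-1)(1-\epsilon)-k\bigl(1+(p-1)\epsilon\bigr)\Bigr),\\
c_1&=4(p-1)\epsilon k\vartheta,\qquad c_0=\frac{4(p-1)(1-\epsilon)}{p}k\vartheta^2-(p-1)^2.
\end{align*}
For $k=p$ one has $c_2<0$ even at $\epsilon=0$. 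So the inequality breaks down as $\vartheta s\to\pi/2$, which is exactly the regime needed near the sharp constant. Optimizing over $\vartheta$ then gives a much smaller threshold: for $n=3$ it is below $1$, versus $\pi\sqrt{2/3}\approx2.57$.

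The right choice is $k=\frac{(p-1)(1-\epsilon)}{1+(p-1)\epsilon}<p-1$ and $\vartheta=\frac{\sqrt{p(1+(p-1)\epsilon)}}{2(1-\epsilon)}$. This gives $c_2=c_0=0\le c_1$ and the condition $\chi K<\frac{\pi}{2\vartheta}$. That bound tends to $\pi/\sqrt p$ as $\epsilon\to0$, and to $\pi\sqrt{2/n}$ as $p\downarrow n/2$. This is the exponent $l$ of Lemma~\ref{l2.5}; the paper also inserts an exponential factor to exploit the linear coefficient and slightly enlarge the admissible interval.

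Since you absorb the zero-order terms with the damping $-\sigma_1\into v^{p+1}\varphi$ rather than with a share of the diffusion, you may even take $\epsilon=0$. That is, $\varphi=\cos\bigl(\tfrac{\sqrt p}{2}\chi u\bigr)^{-(p-1)}$, for which the gradient form is an exact negative square. This is a slightly cleaner route than the paper's Gagliardo--Nirenberg step. The rest of your bootstrap matches Lemma~\ref{l2.4}.
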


To derive an estimate for the blow-up time of solutions to system \eqref{1.1}-\eqref{1.3}, we introduce the following auxiliary functional.
\begin{equation}\label{t1.3.1}
	\Psi_\tau= \Psi_\tau(t):= \tau \into \mgu^4 + \into  v^2  + \tau \into \mgw^2 \quad \textrm{for all } t \in (0,\tmax),
\end{equation}
with initial value
\begin{equation}\label{t1.3.2}
	\Psi_\tau(0) = \tau \into |\nabla u_0|^4 + \into  v_0^2  + \tau \into \lvert \nabla w_0 \rvert^2.
\end{equation}
Indeed, while we have recalled that blow-up does not occur in two dimensions, the possibility of finite-time blow-up in higher dimensions remains an open problem.
\begin{theorem}\label{t1.3}
Suppose that $\Omega \subset\mathbb{R}^3$ is a bounded convex domain, $\tau\in\{0,1\}$ and the function $f$ satisfy \eqref{1.5}. Let $(u,v,w)$ be a solution of the system \eqref{1.1}-\eqref{1.3} corresponding to initial data satisfying \eqref{1.4}, blowing-up at finite time $\tmax$, in the sense that
\begin{align*}
\limsup_{t \to T_{max}} \lVert v(\cdot,t) \rVert_{\lis} = +\infty.
\end{align*}
Then, there exist computable constants $\mathcal{A}_\tau, \mathcal{B}_\tau, \mathcal{C}_\tau, \mathcal{D}_\tau$ such that
	\begin{align} 
		T_{\text{max}} \geq \int_{\Psi_\tau(0)}^{+\infty} \frac{\mathrm{d} \Psi_\tau(t)}{\mathcal{A}_\tau \Psi_\tau(t)^3 + \mathcal{B}_\tau \Psi_\tau(t)^\frac32 + \mathcal{C}_\tau \Psi_\tau(t)^\tau +\tau \mathcal{D}_\tau},\label{t1.3.3}
	\end{align}
where $\Psi_\tau$ is defined in \eqref{t1.3.1}. In particular, we can find a computable $\mathcal{E}_\tau>0$ such that this explicit lower bound
\begin{align}
	\tmax\geq \int_{\Psi_\tau(0)}^{+\infty} \frac{\mathrm{d}\Psi_\tau(t)}{\mathcal{E}_\tau\: \Psi_\tau(t)^3}=\frac{1}{2\mathcal{E}_\tau \:\Psi_\tau(0)^2}\label{t1.3.4}
\end{align}
can be derived.
\end{theorem}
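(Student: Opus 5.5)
The lower bound will come from a first-order differential inequality for $\Psi_\tau$ of the form
\begin{align*}
\Psi_\tau'(t)\le \mathcal{A}_\tau\Psi_\tau^3+\mathcal{B}_\tau\Psi_\tau^{3/2}+\mathcal{C}_\tau\Psi_\tau^{\tau}+\tau\mathcal{D}_\tau \quad\text{on }(0,\tmax),
\end{align*}
integrated from $0$ to $\tmax$. To make the integration meaningful, we first check that $\Psi_\tau(t)\to\infty$ as $t\to\tmax$. For $\tau=1$ this follows from the $\mathcal{L}^\infty$ blow-up of $v$ via the standard extensibility criterion. For $\tau=0$ only $\into v^2$ remains, and blow-up of $\|v\|_{\mathcal{L}^2}$ follows because an $\mathcal{L}^2$ bound on $v$ with $n=3$ would bootstrap through the elliptic $u$-equation to boundedness. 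Since $\Psi_\tau$ diverges, separating variables gives $\int_{\Psi_\tau(0)}^{\infty}\mathrm{d}\Psi/(\cdots)\le \tmax$, which is \eqref{t1.3.3}.

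\textbf{Deriving the inequality, $\tau=1$.} We differentiate each part of $\Psi_1$.
\begin{itemize}
\item[] For $\into v^2$, testing the $v$-equation by $v$ gives $-2\into|\nabla v|^2+2\chi\into v\nabla u\cdot\nabla v+2\sigma_1\into v^2(1+u)-2\sigma_1\into v^3-2\delta_1\into v^2$. We absorb the cross term into $\into|\nabla v|^2$, which leaves $\chi^2\into v^2|\nabla u|^2$. We keep $-\sigma_1\into v^3$ to control $\sigma_1\into uv^2$ via Young's inequality, paying terms like $\into u^3$.
\item[] For $\into|\nabla u|^4$, we use $\nabla u\cdot\nabla\Delta u=\frac12\Delta|\nabla u|^2-|D^2u|^2$. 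Convexity of $\Omega$ gives $\partial_\nu|\nabla u|^2\le0$, so the boundary term has a good sign and we obtain a dissipation $-c\into|\nabla|\nabla u|^2|^2$. The coupling terms $-\into|\nabla u|^2\nabla u\cdot\nabla(u(\beta v+\gamma w))$ are integrated by parts to $\into u(\beta v+\gamma w)\nabla\cdot(|\nabla u|^2\nabla u)$. Since $|\Delta u|\le\sqrt3|D^2u|$ in three dimensions, the second derivatives are absorbed by the dissipation, leaving integrals such as $\into u^2v^2|\nabla u|^2$. The $f$ term is handled by $|\nabla f|\le M$ together with Young's inequality.
\item[] For $\into|\nabla w|^2$, testing by $-\Delta w$ gives $-\into|\Delta w|^2+\sigma_2\into uw\Delta w$ and similar terms, which are absorbed in the same way.
\end{itemize}
Nonnegativity of $u,v,w$ and the bound $u\le\max\{\|u_0\|_{\lis},M/\alpha\}$ come from the comparison principle applied to the $u$-equation. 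With $u$ bounded, the mixed terms reduce to $\into v^2|\nabla u|^2$, $\into w^2|\nabla u|^2$, and similar. By Hölder's inequality, $\into v^2|\nabla u|^2\le(\into v^4)^{1/2}(\into|\nabla u|^4)^{1/2}$.

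\textbf{Main obstacle.} The hardest step is controlling $\into v^4$ (or $\into v^3$) by $\Psi$ and the dissipation with the right powers. For this we apply the three-dimensional Sobolev/Gagliardo--Nirenberg inequality $\|\varphi\|_{\mathcal{L}^3}^{3}\le c\,(\|\nabla\varphi\|_{\mathcal{L}^2}^2+\|\varphi\|^2_{\mathcal{L}^2})^{3/4}\|\varphi\|^{3/2}_{\mathcal{L}^2}+c\|\varphi\|_{\mathcal{L}^2}^3$, as in Payne--Song type lower-bound arguments. Choosing $\varphi=v$ and $\varphi=|\nabla u|^2$, we absorb the gradient parts via Young's inequality into the dissipations $\into|\nabla v|^2$ and $\into|\nabla|\nabla u|^2|^2$. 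The remainders then appear as $(\into v^2)^3$, $(\into|\nabla u|^4)^3$, and $(\into v^2)^{3/2}$-type terms; this is where the exponents $3$ and $3/2$ come from. The $w$-terms produce linear pieces plus constants from $f$, which give $\mathcal{C}_1\Psi$ and $\mathcal{D}_1$. Collecting everything yields the inequality with $\tau=1$.

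\textbf{The case $\tau=0$.} Here $u$ solves an elliptic problem. We estimate $\into v^2|\nabla u|^2$ by elliptic regularity, $\|\nabla u\|_{\mathcal{L}^4}\le c(\|v\|_{\mathcal{L}^2}+\|w\|_{\mathcal{L}^2}+1)$, which uses the bounded $u$ from comparison, and then apply the same Gagliardo--Nirenberg argument. The $\tau\mathcal{D}_\tau$ term drops out and $\Psi_0^{\tau}=\Psi_0^0$ is a constant, giving the inequality with $\tau=0$.

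\textbf{The explicit bound \eqref{t1.3.4}.} We may assume $\Psi_\tau(0)$ is at least a fixed value (say $1$); otherwise we restart from the first time $\Psi_\tau$ reaches that value. For $\Psi\ge1$ we have $\Psi^{3/2},\Psi^{\tau},1\le\Psi^3$, so the denominator is at most $\mathcal{E}_\tau\Psi^3$ with $\mathcal{E}_\tau=\mathcal{A}_\tau+\mathcal{B}_\tau+\mathcal{C}_\tau+\tau\mathcal{D}_\tau$. Integrating gives $\tmax\ge 1/(2\mathcal{E}_\tau\Psi_\tau(0)^2)$.
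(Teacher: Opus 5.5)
Your derivation of the differential inequality for $\tau=1$ follows the paper's route. You use convexity to drop $\int_{\partial\Omega}|\nabla u|^2\partial_\nu|\nabla u|^2$, then Young, then the three-dimensional $L^3$ inequality of Lemma~\ref{l2.6} applied to $v$ and $|\nabla u|^2$, which is what produces the powers $3$ and $\frac32$. Integrating the coupling term in $\frac{d}{dt}\int_\Omega|\nabla u|^4$ by parts is a harmless variant; the paper instead applies Young to $u|\nabla u|^3|\nabla v|$ and cancels $\int_\Omega|\nabla v|^2$ against the $v$-dissipation.

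\textbf{Gap in the proof of \eqref{t1.3.4} when $\Psi_\tau(0)<1$.} Restarting at the first $t_1$ with $\Psi_\tau(t_1)=1$ only yields $\tmax\ge\tmax-t_1\ge 1/(2\mathcal{E}_\tau)$. This is weaker than the claimed $1/(2\mathcal{E}_\tau\Psi_\tau(0)^2)$. In fact, with your data-independent $\mathcal{E}_\tau=\mathcal{A}_\tau+\mathcal{B}_\tau+\mathcal{C}_\tau+\tau\mathcal{D}_\tau$, the bound \eqref{t1.3.4} cannot follow from \eqref{t1.3.3} at all. The denominator in \eqref{t1.3.3} is bounded below by its constant term ($\mathcal{D}_1$, resp.\ $\mathcal{C}_0$), which is positive, so the integral stays bounded as $\Psi_\tau(0)\to0$, whereas $1/(2\mathcal{E}_\tau\Psi_\tau(0)^2)\to\infty$. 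The paper avoids any restart by letting $\mathcal{E}_\tau$ depend on the data. On the range of integration one has $\Psi\ge\Psi_\tau(0)$, so $\Psi^{3/2}\le\Psi_\tau(0)^{-3/2}\Psi^3$, $\Psi^\tau\le\Psi_\tau(0)^{\tau-3}\Psi^3$ and $1\le\Psi_\tau(0)^{-3}\Psi^3$. Hence $\mathcal{E}_\tau=\mathcal{A}_\tau+\mathcal{B}_\tau\Psi_\tau(0)^{-3/2}+\mathcal{C}_\tau\Psi_\tau(0)^{\tau-3}+\tau\mathcal{D}_\tau\Psi_\tau(0)^{-3}$ works for every value of $\Psi_\tau(0)$.

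\textbf{The case $\tau=0$.} Your elliptic-regularity step, as written, does not give the cubic power. With $\|\nabla u\|_{L^4}\lesssim\|v\|_{L^2}+1$ (for bounded $w$) and the split $\|v\|_{L^4}^2\|\nabla u\|_{L^4}^2$, absorbing into $\int_\Omega|\nabla v|^2$ via Gagliardo--Nirenberg and Young leaves a term of order $\Psi_0^5$. To reach $\Psi_0^3$ you need something sharper, for example either of:
\begin{itemize}
\item the interpolation $\|\nabla u\|_{L^4}^2\lesssim\|u\|_{L^\infty}\|u\|_{W^{2,2}}\lesssim\|v\|_{L^2}+1$;
\item the bound $\|\nabla u\|_{L^6}\lesssim\|v\|_{L^2}+1$ (from $W^{2,2}\hookrightarrow W^{1,6}$) combined with $v^2|\nabla u|^2\le\frac23v^3+\frac13|\nabla u|^6$.
\end{itemize}
The paper needs none of this. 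It writes $2\chi\int_\Omega v\nabla u\cdot\nabla v=-\chi\int_\Omega v^2\Delta u$ and substitutes $\Delta u=u(\alpha+\beta v+\gamma w)-f$; with $u,v,w\ge0$ this is bounded by $\chi M\int_\Omega v^2$. That estimate shows $\Psi_0$ grows at most exponentially, so for $\tau=0$ the blow-up hypothesis is never met, consistent with Lemma~\ref{l3.2}.

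\textbf{Bound on $w$.} In both cases your $w$-terms become constants only given an $L^\infty$ bound on $w$. These are $\int_\Omega w^2|\nabla u|^2$, the $\int_\Omega w^2$ left from testing by $-\Delta w$, and $\|w\|_{L^2}$ in the elliptic estimate. The paper also uses $\|w\|_{L^\infty}$, in $\mathcal{D}_1$, but you should state where this bound comes from.
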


The paper is organized as follows. Section \ref{sec2} presents the preliminary results and establishes the local existence of classical solutions. In Section \ref{sec3}, we prove the boundedness and global existence of classical solutions to system \eqref{1.1}-\eqref{1.5}. Section \ref{sec4} is devoted to the derivation of blow-up time estimates. Finally, Section \ref{sec5} contains numerical simulations that corroborate the theoretical findings on blow-up behavior.

\section{Preliminaries and local existence}\label{sec2}
We recall several useful inequalities and auxiliary lemmas that will be employed in the subsequent sections.
\begin{lemma}\label{l2.1}
	Let $y(t)$ be a positive absolutely continuous function on $(0, \infty)$ that satisfies
	\begin{align*}
		\left\{
		\begin{array}{rrll}
			\hspace*{-0.6cm}&&y'(t)+A y(t)^p\leq B,\\
			\hspace*{-0.6cm}&&y(0)=y_0,
			\end{array}
		\right.
		\end{align*}
	with some constants $A>0$, $B\geq 0$ and $p\geq 1$. Then we have 
	\begin{align*}
		y(t)\leq \max\left\{y_0, \: \left(\frac{B}{A}\right)^\frac{1}{p}\right\}
		\end{align*}
	for all $t\in(0,\tmax)$.
\end{lemma}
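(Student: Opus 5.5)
The plan is a comparison argument with the explicit ODE $z'+Az^p=B$, split according to whether $y$ ever exceeds the threshold level. Set
\begin{align*}
K:=\max\left\{y_0,\left(\frac{B}{A}\right)^{\frac1p}\right\}.
\end{align*}
The key observation is pointwise: whenever $y(t)>K\ge (B/A)^{1/p}$, we have $Ay(t)^p>B$. Hence the differential inequality gives
\begin{align*}
y'(t)\le B-Ay(t)^p<0
\end{align*}
at almost every such $t$. So $y$ is strictly decreasing on any interval where it stays above $K$.

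We argue by contradiction. Suppose $y(t_1)>K$ for some $t_1>0$. Define
\begin{align*}
t_0:=\sup\{s\in[0,t_1]:\ y(s)\le K\}.
\end{align*}
This set is nonempty because $y(0)=y_0\le K$. By continuity of $y$, which holds since $y$ is absolutely continuous, we get $y(t_0)\le K$ and $t_0<t_1$. Moreover $y>K$ on all of $(t_0,t_1]$.

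Absolute continuity lets us write $y(t_1)-y(t_0)=\int_{t_0}^{t_1}y'(s)\,\ds$. The integrand is negative almost everywhere on $(t_0,t_1)$, so $y(t_1)<y(t_0)\le K$. This contradicts $y(t_1)>K$. Therefore $y(t)\le K$ for all $t$ in the interval of existence, in particular for all $t\in(0,\tmax)$.

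The only delicate point is that the inequality $y'+Ay^p\le B$ holds only almost everywhere. We must therefore pass through the integral representation rather than a pointwise monotonicity argument. Absolute continuity is exactly what guarantees $y(t_1)-y(t_0)=\int_{t_0}^{t_1}y'$, so this step goes through.

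Positivity of $y$ ensures that $y^p$ is well defined and monotone in $y$ for $p\ge1$. The case $B=0$ is included, since then $K=y_0$ and the argument shows that $y$ is nonincreasing.
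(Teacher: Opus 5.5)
Your argument is correct. The first-crossing time $t_0$, combined with the identity $y(t_1)-y(t_0)=\int_{t_0}^{t_1}y'(s)\,\mathrm{d}s$ from absolute continuity, is the right way to handle the fact that $y'+Ay^p\le B$ holds only almost everywhere. Strict negativity of the integrand on the interval $(t_0,t_1)$, which has positive measure, then gives the contradiction.

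The paper gives no proof of this lemma; it simply recalls it as a standard auxiliary fact, so there is no competing route to compare.

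Two small remarks.
\begin{itemize}
\item When $t_0=0$, your integral identity needs $y$ to be absolutely continuous up to $t=0$. This is how the hypothesis $y(0)=y_0$ has to be read anyway, since without continuity at $0$ the lemma is false. If you only have absolute continuity on compact subsets of $(0,\infty)$, apply the identity on $[\varepsilon,t_1]$. This shows $y(t_1)<y(\varepsilon)$ for every $\varepsilon\in(0,t_1)$, and letting $\varepsilon\to0$ gives $y(t_1)\le y_0\le K$.
\item Your proof actually yields the bound for every $t>0$ and for any exponent $p>0$. Only the strict monotonicity of $y\mapsto y^p$ on $(0,\infty)$ is used.
\end{itemize}
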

The following proof of the local existence lemma is adopted from well-established arguments proved in \cite{horstman}.

\begin{lemma}[Local Existence]\label{l2.2}
	Suppose that $\Omega\subset\mathbb{R}^n$, $n\geq 2,$ is a bounded domain with smooth boundary and $\tau \in \{0,1\}$. Moreover, assume that the function $f$ satisfies \eqref{1.5}. Then, for each nonnegative initial data satisfying \eqref{1.4}, there exists $\tmax\in (0,\infty]$ such that  the system \eqref{1.1}-\eqref{1.3} admits a unique nonnegative solution $(u, v, w)$ satisfying
	\begin{align*}
		v&\in { \mathcal{C}^{0}}\left(\overline{\Omega}\times\left.\left[0,\tmax\right.\right)\right)\cap { \mathcal{C}^{2,1}}\left(\overline{\Omega}\times\left(0,\tmax\right)\right),
\end{align*}
and
\begin{align*}
		u, w&\in { \mathcal{C}^{0}}\left(\overline{\Omega}\times\left.\left[0,\tmax\right.\right)\right)\cap { \mathcal{C}^{2,1}}\left(\overline{\Omega}\times\left(0,\tmax\right)\right)\cap { \mathcal{L}^{\infty}_{loc}}\left(\left.\left[0,\tmax\right.\right);\wsq\right),  \quad \mbox{for}\:\: \tau=1
	\end{align*}
or
	\begin{align*}
	u, w&\in {\mathcal{C}^{2,0}}\left(\overline{\Omega}\right)\cap { \mathcal{L}^{\infty}_{loc}}\left(\left.\left[0,\tmax\right.\right);\wsq\right),  \quad \mbox{for}\:\: \tau=0
\end{align*}
	classically in $\Omega\times (0,\tmax)$. Furthermore, if $\tmax<\infty$, then
	\begin{equation} \begin{split}
			\lim_{t\to \tmax}\Big(\nut_{\wsq}+\nvt_{\lis}+\nwt_{\wsq}\Big)= \infty.\label{l1.1}
	\end{split} \end{equation} 
\end{lemma}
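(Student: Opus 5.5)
The plan is the standard fixed-point/semigroup argument from \cite{horstman}, carried out separately for $\tau=1$ and $\tau=0$. I will sketch the case $\tau=1$ first.

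\textbf{Function space and map.} Fix $R>0$ with $\|u_0\|_{\wsq}+\|v_0\|_{\lis}+\|w_0\|_{\wsq}\le R$ and a small $T\in(0,1)$. On
\begin{align*}
X:=\mathcal{C}^0\big([0,T];\cso\big)\times \mathcal{L}^\infty\big((0,T);\wsq\big)\times \mathcal{L}^\infty\big((0,T);\wsq\big)
\end{align*}
take the closed ball $S$ of radius $R+1$. For $(\bar u,\bar v,\bar w)\in S$, define $\Phi(\bar u,\bar v,\bar w)=(u,v,w)$ through the variation-of-constants formulas with the Neumann heat semigroup $e^{t\Delta}$:
\begin{align*}
u(t)&=e^{t\Delta}u_0+\int_0^t e^{(t-s)\Delta}\big[-\bar u(\alpha+\beta\bar v+\gamma\bar w)+f(s)\big]\ds,\\
v(t)&=e^{t\Delta}v_0-\chi\int_0^t e^{(t-s)\Delta}\nabla\cdot(\bar v\nabla\bar u)\ds+\int_0^t e^{(t-s)\Delta}\big[\sigma_1\bar v(1-\bar v+\bar u)-\delta_1\bar v\big]\ds,
\end{align*}
with the analogous formula for $w$.

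\textbf{Contraction.} I will use the smoothing estimates
\begin{align*}
\|\nabla e^{t\Delta}\varphi\|_{\mathcal{L}^q}&\le C(1+t^{-1/2})\|\varphi\|_{\mathcal{L}^q},\\
\|e^{t\Delta}\nabla\cdot\varphi\|_{\mathcal{L}^\infty}&\le C(1+t^{-\frac12-\frac{n}{2q}})\|\varphi\|_{\mathcal{L}^q}.
\end{align*}
The second exponent is integrable near $0$ exactly because $q>n$. Together with $\mathcal{W}^{1,q}\hookrightarrow\mathcal{L}^\infty$ and assumption \eqref{1.5} on $f$, these show that $\Phi$ maps $S$ into itself and is a contraction once $T$ is small. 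Banach's fixed point theorem then gives a unique mild solution.

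\textbf{Regularity, nonnegativity, extension.} Parabolic Schauder theory and a bootstrap upgrade the mild solution to the stated $\mathcal{C}^{2,1}$ regularity. Nonnegativity follows from the comparison principle applied to each equation. For $u$, the source satisfies $f\ge0$ and the reaction term $-u(\cdots)$ vanishes at $u=0$. For $v$ and $w$, the right-hand sides are of the form $v\cdot(\text{bounded})$ plus drift, so zero is a subsolution. Uniqueness follows from a Gronwall estimate on the difference of two solutions in $X$. Since $T$ depends only on $R$, the solution can be extended as long as the norm in \eqref{l1.1} stays bounded, which yields the blow-up alternative.

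\textbf{Case $\tau=0$.} Here the $u$- and $w$-equations become elliptic at each fixed $t$:
\begin{align*}
-\Delta u+(\alpha+\beta v+\gamma w)u=f,\qquad -\Delta w+(\delta_2-\sigma_2u)w=0.
\end{align*}
For the $u$-equation, $L^p$ elliptic theory gives existence of a solution with $\|u\|_{\mathcal{W}^{2,p}}$ controlled by $\|f\|_{\mathcal{L}^\infty}$, using the positive zeroth-order coefficient $\alpha+\beta v+\gamma w\ge \alpha$. The fixed point is then taken only in the $v$-component, with $u$ and $w$ expressed as functions of $v$. The $w$-equation is the main obstacle: its zeroth-order coefficient $\delta_2-\sigma_2u$ can change sign, and with Neumann conditions it generally admits only the trivial solution $w\equiv0$ unless a degenerate eigenvalue condition holds. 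I would handle this by noting that $w\equiv0$ is the unique solution whenever $\delta_2-\sigma_2u>0$, or in general by interpreting $w$ as the trivial branch. For this step I expect to lean on the cited framework \cite{horstman} rather than resolve it fully.
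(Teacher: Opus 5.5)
Your route is the same as the paper's, whose proof is only a pointer to the Banach fixed-point and regularity argument of \cite{horstman} plus the maximum principle. For $\tau=1$ your sketch is a reasonable expansion of it, but the factors of $X$ need relabelling. $\mathcal{C}^0([0,T];\cso)$ must carry $\bar v$, and the two $\mathcal{L}^\infty((0,T);\wsq)$ factors must carry $\bar u,\bar w$, because the term $\bar v\nabla\bar u$ requires $\nabla\bar u\in\mathcal{L}^q(\Omega)$. Here the cross-diffusing species is $v$ and the attractant is $u$.

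The genuine gap is the case $\tau=0$, and it is worse than you suggest. Saying that $0=\Delta w+(\sigma_2u-\delta_2)w$ generically has only the trivial solution treats $u$ as given. But $u$ depends on $w$ through the first equation, so $(u,w)$ solves a nonlinear elliptic system in which the coupling itself produces the ``degenerate eigenvalue condition''.

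Concretely, take $f\equiv F$ and $v_0\equiv V_0$ constant with $F\sigma_2/\delta_2>\alpha+\beta V_0$. Two branches are available:
\begin{itemize}
\item[(a)] $w\equiv0$ and $u=F/(\alpha+\beta v)$;
\item[(b)] $u\equiv\delta_2/\sigma_2$ and $w=(F\sigma_2/\delta_2-\alpha-\beta v)/\gamma$.
\end{itemize}
In each, $v=v(t)$ solves the corresponding ODE. Both are nonnegative, spatially homogeneous classical solutions on a short time interval with the same datum $v_0$, and they differ already at $t=0$, since $F/(\alpha+\beta V_0)>\delta_2/\sigma_2$.

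So for $\tau=0$ the map $v\mapsto(u,w)$ underlying your fixed point in $v$ is not well defined. Selecting the trivial branch gives existence but not the asserted uniqueness. Leaning on \cite{horstman} cannot close this, since it has no analogue of the $w$-equation, and the paper's proof, being a bare citation, does not address it either.

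What is missing is an extra hypothesis such as $M/\alpha<\delta_2/\sigma_2$, which is the condition Theorem \ref{t1.1} imposes when $\tau=0$. Comparison with the constant supersolution $M/\alpha$ then gives $u\le M/\alpha$ for every nonnegative solution, hence $\delta_2-\sigma_2u>0$. Testing the $w$-equation with $w$ then forces $w\equiv0$. The problem reduces to the elliptic--parabolic $(u,v)$ system, where your fixed point in $v$ with $u=(-\Delta+\alpha+\beta\bar v_+)^{-1}f$ goes through, and nonnegativity of $v$ is recovered afterwards by comparison.
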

\begin{proof}
The proof follows from standard arguments based on the Banach fixed point theorem together with elliptic and parabolic regularity theory. For details, we refer to \cite{horstman}. Moreover, the non-negativity of the solution in $\Omega\times(0,\tmax)$ follows from the maximum principle in conjunction with the initial data \eqref{1.4}.
\end{proof}
Throughout the sequel, $(u,v,w)$ denotes the local-in-time solution of system \eqref{1.1}-\eqref{1.3} corresponding to the initial data \eqref{1.4}, as provided by Lemma \ref{l2.2}.

\begin{lemma}\label{l2.3}
	The classical solution $(u,v,w)$ satisfies
	\begin{align}
		 \nut_{\lis}&\leq K_1:=\max\left\{\tau\|u_0\|_{\lis}, \frac{M}{\alpha}\right\}, \quad\text{for all}\:\: t\in(0, \tmax),\label{l2.3.1}\\
		\nvt_{\los}&\leq K_3:=\max\Big\{\|v_0\|_{\los}, (1+K_1) \momeg\Big\},\quad \text{for all}\:\: t\in(0, \tmax).\label{l2.3.3}
	\end{align} 
	Moreover, if $K_1\leq \frac{\delta_2}{\sigma_2}$,
	\begin{align}
	\nwt_{\lis}\leq K_2:= \|w_0\|_{\lis}, \quad\text{for all}\:\: t\in(0, \tmax).\label{l2.3.2}
	\end{align} 
	\begin{proof}
	Estimate \eqref{l2.3.1} follows from the elliptic and parabolic
	maximum principles. Moreover, if
	\(K_1\le \delta_2/\sigma_2\), then
	\(\sigma_2u-\delta_2\le0\), and the maximum principle applied to the
	third equation in \eqref{1.1} yields \eqref{l2.3.2} (see \cite[$\S$6.4 and $\S$7.1.4]{lcevans}). Next, integration of the second equation in \eqref{1.1} gives
	\begin{align*}
	\dt\ints v\leq\sigma_1\left(1+\nru_{\lis}\right)\ints v -\sigma_1 \ints v^2\quad \text{for all $t\in(0,\tmax)$}.
	\end{align*}
	Using Cauchy-Schwarz inequality,
	\begin{align*}
	\dt\ints v\leq\sigma_1\left(1+\nru_{\lis}\right)\ints v -\frac{\sigma_1}{\momeg} \left(\ints v\right)^2\quad \text{for all $t\in(0,\tmax)$}.
	\end{align*}
	Applying Lemma \ref{l2.1} together with \eqref{l2.3.1} yields \eqref{l2.3.3}. More specifically, when $\tau=0$, $\underline{u}\equiv0$ is a subsolution of the first equation in \eqref{1.1}. This implies $u\geq0$ and $ \frac{M}{\alpha}$ is the supersolution for all $(x,t)\in\Bar{\Omega}\times(0,\tmax)$. On the other hand, the third equation shows that $\underline{w}\equiv0$ is a subsolution, so that $w\geq 0$ for all $(x,t)\in\Bar{\Omega}\times(0,\tmax)$. Using these observations,  $\underline v\equiv0$ is the subsolution for the second equation, yielding $v\ge0$. The case $\tau=1$ is analogous, with the initial conditions
	taken into account in the comparison argument.
	\end{proof}
\end{lemma}

\begin{lemma}[Extensibility criterion]\label{l2.4}
	Suppose there exists $\displaystyle{p>\frac{n}{2}\geq 1}$ such that
	\begin{align*}
		\ssup \nvt_{\lps}<\infty.
	\end{align*}
	Then we have
	\begin{equation*} 
		\ssup\Big(\nut_{\wsq}+\nvt_{\lis}+\nwt_{\wsq}\Big)< \infty. %\label{l5.2}
	\end{equation*} 
	In particular $\tmax=\infty$ and $v\in \mathcal{L}^\infty ((0,\infty),\mathcal{L}^\infty(\Omega))$.
\end{lemma}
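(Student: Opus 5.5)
The plan is a bootstrap through the $u$- and $w$-equations, which are linear in $u$ and in $w$ once $v$ is controlled in $L^p$. It proceeds in three steps.

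\textbf{Step 1: a gradient bound for $u$.} Assume $\sup_{t\in(0,T_{\max})}\|v(\cdot,t)\|_{L^p(\Omega)}\le K$ for some $p>\frac n2$. By Lemma \ref{l2.3}, $0\le u\le K_1$. Write the $u$-equation as $\tau u_t=\Delta u-\alpha u+g$ with $g:=-u(\beta v+\gamma w)+f$.

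For $\tau=1$, use the variation-of-constants formula with the Neumann heat semigroup, $u(t)=e^{t(\Delta-\alpha)}u_0+\int_0^t e^{(t-s)(\Delta-\alpha)}g(s)\,ds$. Then apply the standard smoothing estimate
\begin{align*}
\|\nabla e^{t\Delta}\varphi\|_{L^{q}(\Omega)}\le C\big(1+t^{-\frac12-\frac n2(\frac1p-\frac1q)}\big)e^{-\lambda_1 t}\|\varphi\|_{L^p(\Omega)} .
\end{align*}
Since $p>\frac n2$, we can choose $q>n$ with $\frac12+\frac n2(\frac1p-\frac1q)<1$, so the time integral converges uniformly in $t$.

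The term $uw$ in $g$ requires a bound on $w$. We do not assume $K_1\le\delta_2/\sigma_2$ here, so this bound is not free.
\begin{itemize}
\item First, $\|w\|_{L^\infty}$ grows at most like $e^{(\sigma_2K_1-\delta_2)t}$ by comparison. This is only locally bounded.
\item Instead, I would obtain uniform control of $w$ in $L^r$ from the $w$-equation. Its potential $\sigma_2u-\delta_2$ is bounded, and $w$ satisfies a linear equation. When the condition of Lemma \ref{l2.3} holds, Lemma \ref{l2.3} gives boundedness directly.
\item In general, I expect to argue on finite intervals: finiteness of the supremum is only needed on $(0,T_{\max})$ when $T_{\max}<\infty$, and then the exponential bound is uniform.
\end{itemize}
So the conclusion "in particular $T_{\max}=\infty$" is reached by contradiction with \eqref{l1.1}. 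For $\tau=0$, use elliptic $W^{2,p}$ regularity for $-\Delta u+\alpha u=g$ and Sobolev embedding $W^{2,p}\hookrightarrow W^{1,q}$, which is valid for some $q>n$ since $p>\frac n2$. The same argument bounds $\|w\|_{W^{1,q}}$, since $\sigma_2uw\in L^\infty$ once $w$ is bounded.

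\textbf{Step 2: $L^\infty$ bound for $v$.} With $\|\nabla u\|_{L^q}$ bounded for some $q>n$, write
\begin{align*}
v(t)=e^{t(\Delta-1)}v_0-\chi\int_0^t e^{(t-s)(\Delta-1)}\nabla\cdot(v\nabla u)\,ds+\int_0^t e^{(t-s)(\Delta-1)}h\,ds,
\end{align*}
where $h=v+\sigma_1v(1-v+u)-\delta_1v\le (1+\sigma_1(1+K_1))v$. By positivity of the semigroup, the logistic term only helps and can be dropped from above. Estimate $\|e^{t\Delta}\nabla\cdot\psi\|_{L^\infty}\le C(1+t^{-\frac12-\frac n{2r}})e^{-\lambda_1t}\|\psi\|_{L^r}$ with $\psi=v\nabla u$.

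Set $M(T)=\sup_{t<T}\|v\|_{L^\infty}$. Interpolating between $L^p$ and $L^\infty$ gives $\|v\nabla u\|_{L^r}\le \|v\|_{L^\infty}^{\theta}K^{1-\theta}\|\nabla u\|_{L^q}$ with some $\theta<1$, for an $r>n$ slightly below $q$; this uses $\frac1r=\frac1q+\frac{1-\theta}{p}$. We then get $M(T)\le C+CM(T)^\theta$, hence $M(T)\le C$ uniformly in $T$.

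\textbf{Step 3: conclusion.} Feed $v\in L^\infty$ back into Step 1 to upgrade the $W^{1,q}$ bounds for $u$ and $w$. Then \eqref{l1.1} forces $T_{\max}=\infty$, and the bounds hold on $(0,\infty)$.

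The main obstacle is Step 2's interpolation bookkeeping. It requires choosing $q,r$ so that $r>n$, $\theta<1$, and all time singularities are integrable simultaneously; $p>\frac n2$ is exactly what makes this possible. A secondary issue is the uniform control of $w$ when the smallness condition is absent.
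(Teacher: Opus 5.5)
Your argument follows the paper's proof in structure. Both use a Duhamel/Neumann-semigroup estimate for $\nabla u$ in $L^{q}$ with $q>n$ (the paper's $L^{r\eta}$), which works exactly because $p>\frac n2$ gives $\frac12+\frac n2(\frac1p-\frac1q)<1$. Both then apply Duhamel to $v$, use H\"older and interpolation on $v\nabla u$, and absorb $M(T)\le C+CM(T)^\theta$. Both finish with the $\nabla w$ bound and the criterion \eqref{l1.1}.

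The differences are cosmetic:
\begin{itemize}
\item The paper interpolates $v$ between $L^\infty$ and the $L^1$ bound $K_3$, where you use the $L^p$ bound.
\item It runs Duhamel on windows $(t_0,t)$ with $t_0=\max\{0,t-1\}$ instead of shifting to $\Delta-1$.
\item It keeps $-\sigma_1v^2$ to bound the kinetic part of the $v$-equation by a constant. Your linear bound $(1+\sigma_1(1+K_1))v$ needs one extra step, $\|e^{t\Delta}\varphi\|_{L^\infty(\Omega)}\le C(1+t^{-\frac n{2p}})\|\varphi\|_{L^p(\Omega)}$, which is integrable since $p>\frac n2$.
\end{itemize}
Your elliptic $W^{2,p}$ treatment of $\tau=0$ is something the paper's proof never supplies, although Theorem \ref{t1.1} invokes the lemma for $\tau=0$.

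The step you leave open, a time-uniform bound for $w$, is a genuine gap. The paper does not close it either. It simply treats $\|w(\cdot,t)\|_{L^\infty(\Omega)}$ as a constant, which is \eqref{l2.3.2} and needs $K_1\le\delta_2/\sigma_2$. That is harmless in the lemma's only application, Theorem \ref{t1.1}, where this is assumed.

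Your finite-interval fallback does not repair the gap. It excludes $T_{\max}<\infty$, but the lemma asserts the bound on all of $(0,T_{\max})$, including when $T_{\max}=\infty$. Moreover, your Step 2 relies on the uniform $\nabla u$ bound, which depends on $\sup_t\|w\|_{L^p(\Omega)}$. So without it you lose even $v\in L^\infty((0,\infty);L^\infty(\Omega))$.

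The missing idea is that the $uw$ terms cancel, because the $u$- and $w$-equations have the same diffusion and the same $\tau$. Setting $E:=\sigma_2u+\gamma w$ and using $u,v,w\ge0$ and $f\le M$,
\[
\tau E_t=\Delta E-\sigma_2\alpha u-\sigma_2\beta uv-\gamma\delta_2 w+\sigma_2 f\le \Delta E-\min\{\alpha,\delta_2\}E+\sigma_2M,\qquad \partial_\nu E=0 .
\]
The parabolic maximum principle (for $\tau=1$) or the elliptic one (for $\tau=0$) then gives
\[
0\le \gamma w\le E\le \max\Big\{\tau\|\sigma_2u_0+\gamma w_0\|_{L^\infty(\Omega)},\ \frac{\sigma_2M}{\min\{\alpha,\delta_2\}}\Big\}\qquad\text{for all } t\in(0,T_{\max}),
\]
with no smallness assumption. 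With this bound inserted into your Step 1, your Steps 1--3 prove the lemma as stated, for both values of $\tau$.
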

\begin{proof}
	Let $q>n$ and for each fixed $p>\frac{n}{2}$, there hold
	\begin{align*} %\label{l5.3}
		\frac{np}{(n-p)_+}=
		\left\{
		\begin{array}{llll}
			\infty,  &\quad \mbox{if}\quad p\geq n,  \\
			\frac{np}{n-p}>n, & \quad \mbox{if}\quad \frac{n}{2}< p < n,
		\end{array}
		\right.
	\end{align*} 
	and choose $q<\frac{np}{(n-p)_+}$ and $1<r<q$ fulfilling $n<r<\frac{np}{(n-p)_+}$
	which enables to choose $\eta>1$, \: $n < r\eta<\frac{np}{(n-p)_+}$ and $ r \eta \leq q$. We fix arbitrary $t\in(0,\: \tmax)$. Applying the variation of constants formula to the first equation of $\eqref{1.1}$, we get
	\begin{align*}
		u=e^{-\alpha t}e^{t\Delta} u_0-\intt e^{-\alpha(t-s)} e^{(t-s)\Delta} \us\Big(\beta \vs+\gamma \ws\Big)\ds.%\label{l5.4}
	\end{align*}
	From the above, we have the estimate
	\begin{align*}
		\ngut_{\lros}\leq&\: e^{-\alpha t}\|\nabla e^{t\Delta}u_0\|_{\lros}\\
		&+\nut_{\lis} \intt e^{-\alpha(t-s)}\Big\|\nabla e^{(t-s)\Delta}\left(\beta \vs+\gamma \ws\right)\Big\|_{\lros}\ds\\
		&+\intt e^{-\alpha(t-s)} \Big\|\nabla e^{(t-s)\Delta} f(\cdot,s)\Big\|_{\lros}\ds.
	\end{align*}
	By using the estimates for the Neumann heat semigroup (Winkler \cite{winkler}) and $r\eta \leq q$, we obtain
	\begin{align*}
		\ngut_{\lros} \leq&\: c_1e^{-\alpha t}\|u_0\|_{\wsq} \\
		&+c_2\intt e^{-\alpha(t-s)}\left(1+\left(t-s\right)^{-\frac{1}{2}-\frac{n}{2}\left(\frac{1}{p}-\frac{1}{r\eta}\right)}\right)e^{-\lambda(t-s)} \Big\|\vs\Big\|_{\lps}\ds \\
		&+c_2\intt e^{-\alpha(t-s)}\left(1+\left(t-s\right)^{-\frac{1}{2}-\frac{n}{2}\left(\frac{1}{p}-\frac{1}{r\eta}\right)}\right)e^{-\lambda(t-s)} \Big\|\ws\Big\|_{\lps}\ds \\
		&+c_2\intt e^{-\alpha(t-s)}\left(1+\left(t-s\right)^{-\frac{1}{2}-\frac{n}{2}\left(\frac{1}{p}-\frac{1}{r\eta}\right)}\right)e^{-\lambda(t-s)} \Big\|f(\cdot,s)\Big\|_{\lps}\ds \\
		\leq&\: c_1\|u_0\|_{\wsq}\\
		& + c_2\sup_{t\in (0, \tmax)}\nvt_{\lps}\intt e^{-\alpha(t-s)}\left(1+\left(t-s\right)^{-\frac{1}{2}-\frac{n}{2}\left(\frac{1}{p}-\frac{1}{r\eta}\right)}\right)e^{-\lambda(t-s)}\ds \\
		&+ c_2\nwt_{\lis}\intt e^{-\alpha(t-s)}\left(1+\left(t-s\right)^{-\frac{1}{2}-\frac{n}{2}\left(\frac{1}{p}-\frac{1}{r\eta}\right)}\right)e^{-\lambda(t-s)} \ds\\
		&+ c_2M\intt e^{-\alpha(t-s)}\left(1+\left(t-s\right)^{-\frac{1}{2}-\frac{n}{2}\left(\frac{1}{p}-\frac{1}{r\eta}\right)}\right)e^{-\lambda(t-s)} \ds,
		\end{align*}
	where $c_1$ and $c_2$ are positive constants. Because of our assumptions $r\eta<\frac{np}{(n-p)}$ and $r \eta \leq q$, we can ensure that $\frac{1}{2}+\frac{n}{2}\left(\frac{1}{p}-\frac{1}{r\eta}\right) <1$.
	In addition, the Gamma function gives \\
	%	\begin{align*}
		%		\inti x^{-n}  e^{-\lambda x}  = \lambda^{n-1}\: \Gamma (1-n), \qquad \mathrm{for}\quad Re(n)<1, Re(\lambda)>0,
		%	\end{align*}
	$\inti e^{-\alpha\varphi}\left(1+\varphi^{-\frac{1}{2}-\frac{n}{2}(\frac{1}{p}-\frac{1}{r\eta})}\right)e^{-\lambda\varphi} < \infty$. 
	Using Lemma \ref{l2.2}, we can conclude that
	\begin{equation} 
		\ngut_{\lros}\leq c_4, \qquad \forall\, t\in (0, \tmax),\label{l5.6}
	\end{equation} 
	where $c_4>0$. Next let $\tin =\mathrm{max}\{0,\: t-1\}$ and use the variation of constants formula to the second equation of $\eqref{1.1}$, to get
	\begin{equation} \begin{split}
			v(\cdot, t)=&e^{-\delta_1(t-\tin)}e^{(t-\tin)\Delta} v(\cdot, \tin)-\chi\intti e^{-\delta_1(t-\tin)} e^{(t-s)\Delta}\nabla\cdot\Big(\vs\nabla\us\Big)\ds\\
			&+\sigma_1\intti e^{-\delta_1(t-\tin)}e^{(t-\tin)\Delta} \vs\Big(1-\vs+\us\Big)\ds\label{l5.7}
			%&+\mu_1\intti e^{(t-s)\Delta}\buks\Big(1-\bus\Big)\ds.
	\end{split} \end{equation} 
	Next, taking $\lis$ norm on both sides of \eqref{l5.7}, we obtain
	\begin{align*}
		\nvt_{\lis}\leq &\: e^{-\delta_1(t-\tin)}\big\|e^{(t-\tin)\Delta} v(\cdot, \tin)\big\|_{\lis}\\
		&+\chi\intti e^{-\delta_1(t-\tin)}\Big\|\nabla e^{(t-s)\Delta}\Big(\vs\nabla \us\Big)\Big\|_{\lis}\ds\\
		&+\sigma_1\intti e^{-\delta_1(t-\tin)}\left\|e^{(t-\tin)\Delta} \vs\Big(1-\vs+\us\Big)\right\|_{\lis}\ds
	\end{align*}
	for all $t\in(0,\tmax)$. If $t\leq 1$, then $\tin=0$ and we can use the maximum principle to get
	\begin{align*}
		\big\|e^{(t-\tin)\Delta} v(\cdot,\tin)\big\|_{\lis}= \big\|e^{(t-s_0) \Delta} v(\cdot, 0)\big\|_{\lis} \leq \big\| v(\cdot, 0)\big\|_{\lis}.
	\end{align*}
	If $t>1$, again using the Neumann heat semigroup (Winkler \cite{winkler}) property, with $c_5>0$,
	\begin{align*}
		\big\|e^{(t-\tin)\Delta} v(\cdot,\tin)\big\|_{\lis}\leq c_5(t-\tin)^{-\frac{n}{2}}\big\| v(\cdot,\tin)\big\|_{\los} \leq c_5 K_3,
	\end{align*}
	because $t-\tin=1$. Using cauchy's inequality
	\begin{align*}
		\sigma_1 v(1-v+u)\leq \sigma_1 v(1+u)-\sigma_1 v^2 \leq \sigma_1 \sigma_1^2 v^2 +\frac{(1+u)^2}{4\sigma_1}-\sigma_1 v^2\leq \frac{(1+u)^2}{4\sigma_1}
	\end{align*}
	In view of the estimates for the Neumann heat semigroup (Winkler \cite{winkler}),  
	$c_6 > 0$ satisfies
	{\small
		\begin{equation} \begin{split}
				\nvt_{\lis}\leq &\:\max\Big\{\| v(\cdot, 0)\|_{\lis}, c_5 K_3\Big\}  \\
				&+c_6\intti\left(1+\left(t-s\right)^{-\frac{1}{2}-\frac{n}{2}\left(\frac{1}{r}-\frac{1}{\infty}\right)}\right)e^{-\lambda(t-s)}\Big\|\vs\nabla \us\Big\|_{\lrs}\ds\\
				&+\frac{(1+u)^2}{4\sigma_1}\intti\left(1+\left(t-s\right)^{-\frac{1}{2}-\frac{n}{2}\left(\frac{1}{r}-\frac{1}{\infty}\right)}\right)e^{-\lambda(t-s)}\ds\label{l5.8}
		\end{split} \end{equation} 
	}
	%where $\frac{1}{2}+\frac{n}{2q_0}<1$ because of $q_0>n$. %Once again using the Gamma function gives\\ $\inti\left(1+\psi^{-\frac{1}{2}-\frac{n}{2q_0}}\right)e^{-\lambda\psi} < \infty.$ %Now, using the Youngs's inequality, we have
	%\begin{align*}
	%\mu_1\buk\big(1-\bu\big)%\leq &\:  \mu_1\buk-\mu_1\bu^{\kappa+1}\\
	%\leq&\: \mu_1 \bu^{\kappa+1}+\mu_1\left(\frac{\kappa+1}{\kappa}\right)^{-\kappa}\frac{1}{\kappa+1}-\mu_1\bu^{\kappa+1}
	%\leq c_7.
	%\end{align*}
	%The last term in \eqref{l5.8} can be written as
	%\begin{equation} \begin{split}
	%\mu_1\intti\left\|e^{(t-s)\Delta}\buks\left(1-\bus\right)\right\|_{\lis}\ds\leq &\:  \mu_1\intti\Big\|\buks\Big(1-\bus\Big)\Big\|_{\lis}\ds  \\
	%&\leq  c_7[t-\tin].\label{l2.9}
	%\end{split} \end{equation} 
	%If $t\leq 1$, then $\tin=0$ and we have $c_7[t-t_0]\leq c_8$ and if $t>1$,
	%$t-t_0=1$, then we have $c_7[t-s_0]=c_7$. 
	%Therefore, inserting \eqref{l2.9} into \eqref{l5.8}, one gets
	%\begin{equation} \begin{split}
	%\nut_{\lis}\leq & \:\max\Big\{\|\bu(\cdot, 0)\|_{\lis}, c_5M_1\Big\}  \\
	%&+ c_6\intti\left(1+\left(t-s\right)^{-\frac{1}{2}-\frac{n}{2}\left(\frac{1}{q_0}-\frac{1}{\infty}\right)}\right)e^{-\lambda(t-s)}\Big\|\bus\gvs\Big\|_{\lrs}\ds  \\
	%&+c_8,\label{l5.10}
	%\end{split} \end{equation} 
	%for all $t\in(0,\tmax)$. 
	Here by using the H\"older inequality and the Interpolation inequality and also
	using \eqref{l2.3.3} and \eqref{l5.6}, we obtain
	\begin{equation} \label{l5.11}
		\begin{split}
			\big\|\vs\nabla \us\big\|_{\lrs} \leq &\:  \:\big\|\vs\big\|_{{ L^{r\widehat{\eta} }}(\Omega)}\: \big\|\nabla \us\big\|_{\lros} \\
			\leq &\:  \: \big\|\vs\big\|_{\lis}^{\theta}\: \big\|\vs\big\|_{\los}^{1-\theta}\: \big\|\nabla \us\big\|_{\lros} \\
			\leq &\:  c_{7} \big\|\vs\big\|_{\lis}^{\theta},
		\end{split}
	\end{equation}
	where $\widehat{\eta}$ is the dual exponent of $\eta$ and $\theta=1-\frac{1}{r \widehat{\eta}} \in (0,1)$, for all $s\in (\tin, t)$ and $c_{7}>0$.  Inserting \eqref{l5.11} in \eqref{l5.8} and using the Gamma function, we obtain  $\inti\left(1+\varphi^{-\frac{1}{2}-\frac{n}{2r}}\right)e^{-\lambda\varphi} < \infty$, where $\frac{1}{2}+\frac{n}{2r}<1$ because of $r>n$. This gives
	\begin{equation*} %\label{l5.12}
		\nvt_{\lis}\leq \:\max\Big\{\|v(\cdot, 0)\|_{\lis}, c_5 K_3\Big\}+c_{8} \big\|\vs\big\|_{\lis}^{\theta}.
	\end{equation*} 
	Finally, using the Young inequality, we obtain
	\begin{equation} \begin{split}
			\nvt_{\lis}\leq &c_{9}, \hspace*{2cm} \forall t\in(0, \tmax),\label{l5.13}
	\end{split} \end{equation} 
	where $c_{9}>0$.
	Similarly applying the variation of constants formula to the third equation of $\eqref{1.1}$, we get
	%\begin{align*}
	%	\bwt=e^{t\Delta}\bw_0-\delta\intt e^{(t-s)\Delta}\buls\bws\ds+\mu_2\intt e^{(t-s)\Delta}\bwms\big(1-\bws\big)\ds.%\label{l5.4}
	%\end{align*}
	%Now,
	\begin{align*}
		\|\nabla\wt\|_{\lros}\leq&\:e^{-\delta_2(t-\tin)}\|\nabla e^{t\Delta} w_0\|_{\lros}+\sigma_2 \intt e^{-\delta_2(t-\tin)}\Big\|\nabla e^{(t-s)\Delta}\us\ws\Big\|_{\lros}\ds
	\end{align*}
	By using the estimates for the Neumann heat semigroup (Winkler \cite{winkler}), we obtain
	\begin{align*}
		\|\nabla\wt\|_{\lros} \leq&\: c_{10}\|w_0\|_{\wsq}  \\
		&+c_{11}\nut_{\lis}\intt\left(1+\left(t-s\right)^{-\frac{1}{2}-\frac{n}{2}\left(\frac{1}{p}-\frac{1}{r\eta}\right)}\right)e^{-\lambda(t-s)}\Big\|\ws\Big\|_{\lps}\ds
	\end{align*}
	%where $c_1$ and $c_2$ are positive constants. Because of our assumption $q_0 \zeta<\frac{np}{(n-p)}$ and $q_0 \zeta \leq q$, we can ensure that $\frac{1}{2}+\frac{n}{2}\left(\frac{1}{p}-\frac{1}{q_0\zeta}\right) <1$,
	%\begin{align*}
	%	\inti x^{-n}  e^{-\lambda x}  = \lambda^{n-1}\: \Gamma (1-n), \qquad \mathrm{for}\quad Re(n)<1, Re(\lambda)>0,
	%\end{align*}
	%$\inti e^{-\alpha\psi}\left(1+\psi^{-\frac{1}{2}-\frac{n}{2}(\frac{1}{p}-\frac{1}{q_0\zeta})}\right)e^{-\lambda\psi} < \infty$.
	Using the Gamma function and previous results, finally  we obtain
	\begin{equation} \begin{split}
			\|\nabla\wt\|_{\lros}&\leq c_{13}, \qquad t\in(0, \tmax),\label{l7.2}
	\end{split} \end{equation} 
	where $c_{13}>0$. Since the initial data satisfy $u_0,w_0\in\wsq$, the smoothing property of the heat semigroup implies that $u,w\in \mathcal{L}^\infty\bigl((0,\tmax);\wsq\bigr)$. Consequently,
	\begin{align*}
	\ssup\Bigl(\nut_{\wsq}+\nvt_{\lis}+\nwt_{\wsq}\Bigr)<\infty.
	\end{align*}
	Assume, by contradiction, that $\tmax<\infty$. Then the above estimate contradicts the extensibility criterion \eqref{l1.1}. Therefore, $\tmax=\infty$, and the proof is complete.
\end{proof}

This result provides a boundedness criterion that will be instrumental in establishing the boundedness of solutions to the model under consideration. Furthermore, it plays a key role in showing that any unbounded solution must be accompanied by blow-up of an appropriate energy functional. More precisely, we have the following result.

%\begin{remark}\label{r2.1} For $n=3$, if $(u,v,w)$ is a solution to model \eqref{1.1}-\eqref{1.3} which blows-up in finite time $\tmax$ in the sense that 
%	\begin{equation*} 
%		\limsup_{t\to \tmax} \nvt_{\lis} = \infty,
%	\end{equation*} 
%	then $\lim_{t \to T_{max}} \Psi_\tau(t) = +\infty$, where $\Psi_\tau$ is defined in \eqref{t1.3.1}.
%	Indeed, if $\into v^2$ were uniformly bounded-in-time on $(0,\tmax)$, necessarily in view of Lemma \ref{l2.4} we would obtain that $v\in L^\infty((0,\infty);L^\infty(\Omega))$, which is a contradiction. 
%\end{remark}

\begin{remark}\label{r2.1}
	Let $n=3$. If $(u,v,w)$ is a solution of \eqref{1.1}--\eqref{1.3} that blows up at the finite time $\tmax$ in the sense that $
	\limsup_{t\to\tmax}\|v(\cdot,t)\|_{\lis}=\infty$,
	then $\lim_{t\to\tmax}\Psi_\tau(t)=+\infty$, where $\Psi_\tau$ is defined in \eqref{t1.3.1}. Indeed, if $\int_\Omega v^2(\cdot,t)\,dx$ were uniformly bounded for $t\in(0,\tmax)$, then Lemma \ref{l2.4} would imply that $v\in \mathcal{L}^\infty\bigl((0,\tmax); \mathcal{L}^\infty(\Omega)\bigr)$,
	which contradicts the blow-up assumption.
\end{remark}

%\subsection{Some general tools} \label{subsec:GenTools}
%We will rely on the following general results.

The next lemma (see the main ideas in \cite{qzhang} and \cite{kbaghaei2}, inspired by \cite{ytao}) will be used to prove Theorem \ref{t1.1} in the case $\tau=1$.

%\begin{lemma}\label{l2.5}
%	Let $\epsilon\in(0,1)$ and $p>1$. Define the function
%	\begin{align*}
%		\phi(x):=e^{\zeta(x)}, \quad 0\leq x\leq N,
%	\end{align*}
%	where
%	\begin{align*}
%		\zeta(x):=-\frac{l}{2m}x+\frac{\sqrt{4km-l^2}}{2m}\int^x_0\tan\left(\frac{\sqrt{4km-l^2}}{2r}s+\arctan\frac{l}{\sqrt{4km-l^2}}\right)\ds
%	\end{align*}
%	with $k=(p-1)^2, l=-4(p-1)\epsilon, m=\frac{4}{p}(1+(p-1)\epsilon)$ and $r=\frac{4}{p}(p-1)(1-\epsilon)$. If
%	\begin{equation} \begin{split}
%			N<\frac{2}{\sqrt{p}}\sqrt{\frac{1-\epsilon}{1+p\epsilon}}\left(\frac{\pi}{2}+\arctan\sqrt{\frac{p}{1+(p-1)\epsilon-p\epsilon^2}}\epsilon\right),\label{l6.1}
%	\end{split} \end{equation} 
%	then the function $\phi(x)$ is well defined and satisfies the following conditions
%	\begin{equation*} %\label{l6.2}
%		1\leq \phi(x)\leq \phi(N), \qquad 0\leq \phi^{'}(x)<\infty
%	\end{equation*} 
%	and
%	\begin{equation} \begin{split}
%			\frac{1}{p}\phi^{''}(x)-\phi^{'}(x)\geq 0.\label{l6.3}
%	\end{split} \end{equation} 
%	Moreover we have
%	\begin{equation} \begin{split}
%			|(p-1)\phi(x)-2\phi^{'}(x)|-2\sqrt{(p-1)(1-\epsilon)\phi(x)\left(\frac{1}{p}\phi^{''}(x)-\phi^{'}(x)\right)}=0\label{l6.4}
%	\end{split} \end{equation} 
%	for all $0\leq x\leq N$.
%\end{lemma}

\begin{lemma}\label{l2.5}
	Let $\epsilon\in(0,1)$ and $p>1$. Define
\begin{align*}
	k:=\frac{p(p-1)\epsilon}{2(1+(p-1)\epsilon)},
	\quad
	l:=\frac{(p-1)(1-\epsilon)}{1+(p-1)\epsilon},\quad
	m:=
	\frac{\sqrt{p\bigl(1+(p-1)\epsilon-p\epsilon^2\bigr)}}
	{2(1-\epsilon)}.
\end{align*}
	Let
	\[
	\phi(x):=e^{kx}\cos(m x)^{-l},
	\qquad 0\le x\le N.
	\]
	If
	\begin{equation}
		N<\frac{\pi}{2m}=
		\frac{\pi(1-\epsilon)}
		{\sqrt{p\bigl(1+(p-1)\epsilon-p\epsilon^2\bigr)}},
		\label{l6.1}
	\end{equation}
	then $\phi$ is well defined on $[0,N]$ and satisfies
\begin{align*}
	1\le \phi(x)\le \phi(N),
	\qquad
	0\le \phi'(x)<\infty,
	\qquad 0\le x\le N,
\end{align*}
	as well as
	\begin{equation}
		\frac1p\phi''(x)-\phi'(x)\ge0,
		\qquad 0\le x\le N.
		\label{l6.2}
	\end{equation}
	Moreover,
	\begin{equation}
		|(p-1)\phi(x)-2\phi'(x)|
		-
		2\sqrt{
			(p-1)(1-\epsilon)\phi(x)
			\left(
			\frac1p\phi''(x)-\phi'(x)
			\right)
		}
		=0
		\label{l6.3}
	\end{equation}
	for all $x\in[0,N]$.
\end{lemma}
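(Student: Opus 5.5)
The plan is to reduce every claim to elementary properties of the logarithmic derivative of $\phi$ and to a polynomial identity in $T:=\tan(mx)$. First I check that the constants make sense. Since $1+(p-1)\epsilon-p\epsilon^2=(1-\epsilon)(1+p\epsilon)>0$ for $\epsilon\in(0,1)$, $p>1$, the constant $m$ is real and positive, and $k>0$, $l>0$. Under \eqref{l6.1} we have $0\le mx<\pi/2$ on $[0,N]$, so $\cos(mx)>0$ there. Hence $\phi$ is well defined and smooth on $[0,N]$, with $\phi(0)=1$.

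Next I write $\phi'=g\phi$ with
\begin{align*}
g(x):=k+lm\tan(mx),
\end{align*}
so that
\begin{align*}
\phi''=(g'+g^2)\phi,\qquad g'=lm^2\bigl(1+\tan^2(mx)\bigr).
\end{align*}
On $[0,N]$ we have $\tan(mx)\ge0$, so $g\ge k>0$. This gives $0\le\phi'<\infty$ and shows that $\phi$ is nondecreasing, hence $1=\phi(0)\le\phi(x)\le\phi(N)$.

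For \eqref{l6.2} and \eqref{l6.3}, I divide by $\phi>0$. It then suffices to prove the identity
\begin{align*}
\bigl(p-1-2g\bigr)^2=\frac{4(p-1)(1-\epsilon)}{p}\Bigl(g'+g^2-pg\Bigr),
\end{align*}
where both sides are quadratic polynomials in $T$. This identity immediately gives $\frac1p\phi''-\phi'=\frac{\phi\,(p-1-2g)^2}{4(p-1)(1-\epsilon)}\ge0$, which is \eqref{l6.2}. Taking square roots then yields \eqref{l6.3}.

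The main work is matching the three coefficients, and each one corresponds to the choice of one constant.
\begin{itemize}
\item The $T^2$ coefficient requires $4l^2m^2=\frac{4(p-1)(1-\epsilon)}{p}(lm^2+l^2m^2)$, that is, $l=\frac{(p-1)(1-\epsilon)}{p}(1+l)$. This holds because $1+l=\frac{p}{1+(p-1)\epsilon}$.
\item The $T$ coefficient requires $-(p-1-2k)=\frac{(p-1)(1-\epsilon)}{p}(2k-p)$. Solving this linear equation for $k$ gives exactly $k=\frac{p(p-1)\epsilon}{2(1+(p-1)\epsilon)}$.
\item The constant term requires $(p-1-2k)^2=\frac{4(p-1)(1-\epsilon)}{p}\bigl(lm^2+k^2-pk\bigr)$. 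This should be solved for $m^2$. After substituting $k$ and $l$, it must simplify to $m^2=\frac{p(1+(p-1)\epsilon-p\epsilon^2)}{4(1-\epsilon)^2}$.
\end{itemize}

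I expect this last algebraic simplification to be the only real obstacle. I would carry it out by clearing the common denominator $1+(p-1)\epsilon$. Useful intermediate quantities are $p-1-2k=\frac{(p-1)(1-\epsilon)}{1+(p-1)\epsilon}$ and $k^2-pk=-\frac{p^2(p-1)\epsilon\,(2+(p-1)\epsilon)}{4(1+(p-1)\epsilon)^2}$. If these do not combine to the stated $m$, I would recompute $m$ from this equation; the value of $m$ only affects the admissible range of $N$ in \eqref{l6.1}, not the structure of the argument.
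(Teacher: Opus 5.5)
Your proposal is correct and follows essentially the paper's route. The paper likewise writes $\phi'=y\phi$ with $y=k+lm\tan(mx)$, reduces to a quadratic in $\tan(mx)$ (a perfect square with zero discriminant), and then uses $\frac1p(y'+y^2)-y=\frac{(p-1-2y)^2}{4(p-1)(1-\epsilon)}$, which is exactly the identity your coefficient matching establishes. The constant-term check you left open does close, so the fallback of recomputing $m$ is not needed: since $p-1-2k=l$, one gets $\frac{p\,l^2}{4(p-1)(1-\epsilon)}-(k^2-pk)=\frac{p(p-1)(1+p\epsilon)}{4(1+(p-1)\epsilon)}=lm^2$ precisely when $m^2=\frac{p(1+p\epsilon)}{4(1-\epsilon)}$, and this is the stated value because $(1-\epsilon)(1+p\epsilon)=1+(p-1)\epsilon-p\epsilon^2$.
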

\begin{proof}
Since $k>0$, $l>0$, and $0\le m x<\frac{\pi}{2}$ for all $x\in[0,N]$, we have $\cos(m x)>0$, which ensures that $\phi(x)=e^{kx}\cos(m x)^{-l}$ is well defined, strictly positive, and finite on $[0,N]$. Moreover, since $\tan(m x)\ge 0$ on $[0,N]$, it follows that
\begin{align}
\frac{\phi'(x)}{\phi(x)} = k + lm \tan(m x) \ge k > 0, \label{l6.4}
\end{align}
so that $\phi'(x)>0$ on $[0,N]$. Consequently, $\phi$ is strictly increasing and satisfies
\begin{align*}
1\le \phi(x)\le \phi(N), \qquad x\in[0,N].
\end{align*}
Since $\tan(m x)$ is finite on $[0,N]$, which implies
\begin{align*}
0\le \phi'(x)<\infty, \quad \text{for all } x\in[0,N].
\end{align*}
Set $y(x):=k+lm\tan(mx)$, \eqref{l6.4} gives $\phi'(x)=\phi(x)y(x)$. Differentiating, $\phi''(x)=\phi(x)\bigl(y'(x)+y(x)^2\bigr)$, where $y'(x)=lm^2(1+\tan^2(mx))$
and hence
\begin{align}
\frac1p\phi''(x)-\phi'(x)
=
\phi(x)\left(\frac1p(y'(x)+y(x)^2)-y(x)\right).\label{l6.5}
\end{align}
Because $\phi(x)>0$, it suffices to show $y(x)^2-py(x)+y'(x)\geq 0$. Let $z=\tan(mx)$, therefore
\begin{align}
y(x)^2-py(x)+y'(x)&=\Lambda_1z^2+\Lambda_2z+\Lambda_3\nonumber\\
&=\frac{p(p-1)\left(\sqrt{p(1+p \epsilon)}\:z-\sqrt{(1-\epsilon)}\right)^2}{4 (1+(p-1) \epsilon )^2},\label{l6.6}
\end{align}
where
\begin{align*}
\Lambda_1&=lm^2(l+1)=\frac{p^2(p-1)(1+p \epsilon)}{4 (1+(p-1) \epsilon )^2}>0\\
\Lambda_2&=lm(2k-p)=-\frac{p(p-1)\sqrt{p (1-\epsilon) (1+p \epsilon)}}{2 (1+(p-1) \epsilon)^2}<0,\\
\Lambda_3&=k^2-pk+lm^2=\frac{p(p-1)(1-\epsilon)}{4 (1+(p-1) \epsilon)^2}>0.
\end{align*}
Since $\Lambda_1>0$ and $\Lambda_2^2-4\Lambda_1\Lambda_3=0$, \eqref{l6.5} is non-negative, for all $0\leq x\leq N$. From \eqref{l6.6}, direct computations shows that
%Moreover, using the identity satisfied by $y$ (which is obtained from the explicit construction of $\phi$ via the associated Riccati equation), we have
\begin{align*}
\frac1p(y'+y^2)-y
=
\frac{\bigl((p-1)-2y\bigr)^2}{4(p-1)(1-\epsilon)}.
\end{align*}
Substitute in to \eqref{l6.5}, yields
\begin{align*}
\frac1p\phi''-\phi'
=
\frac{\bigl((p-1)\phi-2\phi'\bigr)^2}{4(p-1)(1-\epsilon)\phi},
\end{align*}
which is non-negative. Taking square root directly gives \eqref{l6.6}.
\end{proof}

This further lemma is, conversely, employed in the analysis of Theorem \ref{t1.3}.
\begin{lemma}\label{l2.6}
	Let $\Omega$ be a bounded convex domain in $\mathbb{R}^3$. Then for any nonnegative $\mathcal{F} \in \mathcal{C}^1(\Omega)$ and for every $\omega>0$ it holds
	\begin{align}
		\into \mathcal{F}^3 \leq A_1 \left(\into \mathcal{F}^2\right)^\frac{3}{2} 
		+ \frac{A_2}{\omega^3} \left(\into \mathcal{F}^2\right)^3
		+ A_3 \omega \into \lvert\nabla \mathcal{F}\rvert^2,\label{l2.6.1}
	\end{align}
	where 
	\begin{align*}
		A_1 = \frac{3^{\frac32}}{2 \rho^\frac32},
		\quad
		A_2 = \frac{3^3}{4^\frac{15}{4}}\left(1 + \frac{d}{\rho}\right)^\frac32,
		\quad 
		A_3 = \sqrt{2} \left(1 + \frac{d}{\rho}\right)^\frac32.
	\end{align*}
	and 
	\begin{align*}
		\rho := \min_{\partial\Omega}{x \cdot \nu } > 0, \qquad d := \max_{\partial\Omega}{\lvert x \rvert}.
	\end{align*}
	\begin{proof}
		The proof is a combination of the result in  \cite[Lemma A.2]{lepayne}   and known inequalities. (See details in \cite{VIGLIALORO-JMAA-BlowUp-Attr-Rep}). 
	\end{proof}
\end{lemma}

\section{A priori estimates and global boundedness} \label{sec3}
\subsection{The case $\tau=1$} 
This section is devoted to the global boundedness of solutions to system \eqref{1.1}-\eqref{1.3}. The following lemma yields an $\lps$ estimate for the second component.

\begin{lemma}\label{l3.1}
	Let $(u_0, v_0, w_0)$ be initial data satisfying \eqref{1.4}. For any $p>1$ and $\epsilon\in(0,1)$, assume that \eqref{l6.1} holds with $N=\chi K_1$, where $K_1$ is defined in \eqref{l2.3.1}. Then there exists a constant $C>0$ such that
	\begin{equation*}
		\nvt_{\lps}\leq C,
		\hspace*{1cm}
		\forall\, t\in(0,\tmax).
	\end{equation*}
\end{lemma}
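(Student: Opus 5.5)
We follow the weighted-energy method of Tao and Winkler: we test the $v$-equation against a power of $v$ multiplied by a weight built from $u$, using the function $\phi$ of Lemma \ref{l2.5}. For $\tau=1$ we consider
\begin{align*}
	y(t):=\into v^p\,\phi(\chi u)\qquad\text{for } t\in(0,\tmax).
\end{align*}
By \eqref{l2.3.1} and the nonnegativity of $u$, we have $0\le\chi u\le\chi K_1=N$. Hence $\phi(\chi u)$ is well defined, and by Lemma \ref{l2.5} it satisfies $1\le\phi(\chi u)\le\phi(N)$. Consequently $\into v^p\le y(t)\le\phi(N)\into v^p$, and it suffices to bound $y$.

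We differentiate $y$ and use the $v$- and $u$-equations, integrating by parts under the Neumann conditions. The diffusion and cross-diffusion terms, after substituting $u_t$, produce a quadratic form in $\nabla v$ and $v\nabla u$ with the following coefficients (arguments $\chi u$ suppressed):
\begin{itemize}
	\item $-p(p-1)v^{p-2}\phi$ on $\mgv^2$;
	\item $\chi p\,v^{p-1}\big((p-1)\phi-2\phi'\big)$ on the mixed term $\nabla v\cdot\nabla u$;
	\item $-\chi^2 v^p\big(\phi''-p\phi'\big)$ on $\mgu^2$. Note that $\phi''-p\phi'=p\big(\frac1p\phi''-\phi'\big)\ge0$ by \eqref{l6.2}.
\end{itemize}
This is the computation of Tao's lemma adapted to our sign conventions. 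By \eqref{l6.3}, the absolute value of the mixed coefficient is exactly $2\sqrt{(1-\epsilon)\,\cdot\,\text{product of the diagonal coefficients}}$. Young's inequality then absorbs the cross term, leaving a dissipation of size at least $\epsilon p(p-1)\into v^{p-2}\phi\,\mgv^2\ge0$, which we may simply discard.

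The remaining terms are the reaction terms. They are
\begin{align*}
	p\into v^{p-1}\phi\,\big[\sigma_1 v(1-v+u)-\delta_1 v\big]
	\quad\text{and}\quad
	\chi\into v^p\phi'\big(-u(\alpha+\beta v+\gamma w)+f\big).
\end{align*}
Using $0\le u\le K_1$, $0\le f\le M$, $w\ge0$ and $\phi'\ge0$, the second contribution is bounded above by $\chi M\phi'(N)\into v^p$; the parts involving $-u\beta v$ and $-u\gamma w$ are nonpositive and are dropped. The logistic part contributes $p\sigma_1(1+K_1)\phi(N)\into v^p-p\sigma_1\into v^{p+1}$. We then use Young's inequality in the form
\begin{align*}
	c\into v^p\le \frac{p\sigma_1}{2}\into v^{p+1}+C,
\end{align*}
and Hölder's inequality, $\into v^{p+1}\ge \momeg^{-1/p}\phi(N)^{-(p+1)/p}\,y^{(p+1)/p}$. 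Combining these gives
\begin{align*}
	y'+A\,y^{(p+1)/p}\le B,
\end{align*}
and Lemma \ref{l2.1} yields a bound on $y$, hence on $\nvt_{\lps}$. The case $\tau=0$ is analogous in structure, with $u_t$ absent; there one uses $\Delta u=u(\alpha+\beta v+\gamma w)-f$ directly.

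The main obstacle is the exact bookkeeping of the gradient terms, in particular making sure that the coefficients produced by substituting $u_t=\Delta u-\dots$ match the combination in \eqref{l6.3}. If the constants in Lemma \ref{l2.5} are calibrated to a slightly different normalization, the fallback is to rescale: use $\phi(\chi u/c)$, or adjust $\epsilon$, and retain part of the dissipation via Young's inequality with the factor $(1-\epsilon)$. The extra integral of $v^{p-1}\phi'\nabla u\cdot\nabla v$ generated by $\Delta u$ must be tracked carefully so that it combines with the taxis term into the single mixed coefficient $(p-1)\phi-2\phi'$.
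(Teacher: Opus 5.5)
Your argument is correct, and its core is the paper's. You use the same weighted functional $\int_\Omega v^p\phi(\chi u)$ and obtain the same quadratic form after substituting $u_t$: your three coefficients are exactly $p$ times those in \eqref{l4.2}, with $\nabla\bar u=\chi\nabla u$. You complete the square via \eqref{l6.3} in the same way. The constants of Lemma \ref{l2.5} do match this normalization, so your rescaling fallback is unnecessary.

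The genuine difference is how you absorb the zeroth-order terms. The paper discards $-\sigma_1\int_\Omega v^{p+1}\phi$. It then uses Gagliardo--Nirenberg together with the mass bound \eqref{l2.3.3} to dominate $\int_\Omega v^p$ by the residual $\epsilon(p-1)\int_\Omega v^{p-2}\phi|\nabla v|^2$. Half of that residual goes to the reaction terms and half to creating $+\int_\Omega v^p\phi$ on the left, which yields the linear inequality $\frac1p y'+y\le c_6$. You instead keep the logistic damping and absorb every $\int_\Omega v^p$ term into $\frac{p\sigma_1}{2}\int_\Omega v^{p+1}$ by a pointwise Young inequality. H\"older and $\phi\le\phi(N)$ then give the superlinear inequality $y'+Ay^{(p+1)/p}\le B$, which Lemma \ref{l2.1} closes.

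Your route is more elementary: it needs no Gagliardo--Nirenberg and no $L^1$ bound, and the residual $\epsilon$-dissipation is simply thrown away. The paper's route uses the quadratic damping only indirectly, through \eqref{l2.3.3}. So it would also cover kinetics that merely keep $\int_\Omega v$ bounded, but it genuinely needs $\epsilon>0$.

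Two small points. For the bound $\phi'(\chi u)\le\phi'(N)$, note that $\phi''\ge p\phi'\ge0$ by \eqref{l6.2}, so $\phi'$ is nondecreasing. Your closing sentence on $\tau=0$ is outside the scope of this lemma and, as worded, inaccurate. For $\tau=0$ the weight $\phi(\chi u)$ still depends on $t$, and the elliptic equation gives no handle on $u_t$. One therefore drops the weight and works with the unweighted $\int_\Omega v^p$, substituting $\Delta u$ into $-(p-1)\chi\int_\Omega v^p\Delta u$, as in Lemma \ref{l3.2}.
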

\begin{proof}
	Set $\vo=\chi u$ to the function $\phi(x)$ defined in Lemma \ref{l2.5}. Multiply with $\upo$, $p>1$, the second equation of \eqref{1.1} and integrate with respect to $\Omega$ to get
	\begin{align*}
		\frac{1}{p}\dt\ints\bup\phvo=&\ints\upo v_t\phvo+\frac{1}{p}\ints\bup\povo \vo_t\\
		=&\ints\upo\phvo\left(\lv-\nabla\cdot(v \gvo)+\sigma_1v\left(1-v+\frac{\vo}{\chi}\right)-\delta_1 v\right)\\
		&+\frac{1}{p}\ints\bup\povo\Big(\Delta\vo-\vo(\alpha+\beta v+\gamma w)+\chi f(x,t)\Big).
	\end{align*}
	Using the integration by parts to the above estimate leads to
	\begin{align*}
		\frac{1}{p}\dt\ints\bup\phvo=&-(p-1)\ints v^{p-2}\phvo\mgv^2-\ints\upo\povo\gv\cdot\gvo\\
		&+(p-1)\ints\upo\phvo\gv\cdot\gvo+\ints\bup\povo\mgvo^2-\ints\upo\povo\gv\cdot\gvo\\
		&-\frac{1}{p}\ints\bup\ptvo\mgvo^2+\sigma_1\ints\bup \phvo+\frac{\sigma_1}{\chi} \ints \vo v^p\phvo+\frac{\chi M}{p}\ints \bup\povo
	\end{align*}
	Combining the terms, one can get
	\begin{align}
			\frac{1}{p}\dt\ints\bup\phvo+&(p-1)\epsilon\ints v^{p-2}\phvo\mgv^2=-(p-1)(1-\epsilon)\ints v^{p-2}\phvo\mgv^2 \nonumber\\
			&+\ints\left((p-1)\phvo-2 \povo\right)\upo\gv\cdot\gvo +(\sigma_1+\sigma_1\nru_{\lis})\ints\bup\phvo\nonumber  \\
			&-\ints\left(\frac{1}{p}\ptvo-\povo\right)\bup\mgvo^2 +\frac{\chi M}{p}\ints \bup\povo,\label{l4.2} 
	\end{align} 
	for all $t\in(0, \tmax)$. Using the Gagliardo-Nirenberg inequality  and the  Young inequality, we have
	\begin{align*}
		c_1\ints\bup=c_1\left\|v^{\frac{p}{2}}\right\|^2_{\lts}&\leq c_2\left(\left\|\nabla v^{\frac{p}{2}}\right\|^{2r_0}_{\lts}\,\,\left\|v^{\frac{p}{2}}\right\|^{2(1-r_0)}_{\ltps}+\left\|v^{\frac{p}{2}}\right\|^2_{\ltps}\right) \\
		&\leq \kappa\left(\left\|\nabla v^{\frac{p}{2}}\right\|^{2r_0}_{\lts}\right)^\frac{1}{r_0}+c_2\,c(\kappa)\left(\left\|v^{\frac{p}{2}}\right\|^{2(1-r_0)}_{\ltps}\right)^\frac{1}{1-r_0}+c_2\left\| v^{\frac{p}{2}}\right\|^2_{\ltps} \\
		&\leq  \kappa\left\|\nabla v^{\frac{p}{2}}\right\|^{2}_{\lts}+c_3 \\
		&\leq \frac{\kappa p^2}{4}\ints v^{p-2}\mgv^2+c_{3}.
	\end{align*}
	Thanks to $\phvo\geq 1$,
	\begin{equation} \label{l4.3} \begin{split}
			c_1\ints\bup\leq \frac{\kappa p^2}{4}\ints v^{p-2}\phvo\mgv^2+c_{3}.
	\end{split} \end{equation} 
	with $c_3=c_2c(\kappa) K^p_3, \: c(\kappa)=\left(\frac{\kappa}{r_0}\right)^{\frac{-r_0}{1-r_0}}(1-r_0)$,
	where $r_0=\frac{\frac{p}{2}-\frac{1}{2}}{\frac{p}{2}+\frac{1}{n}-\frac{1}{2}}\in(0,1)$.  
	From \eqref{l4.3}, we can estimate
	\begin{equation}\label{l4.4} \begin{split}
			\left(\sigma_1+\sigma_1\nru_{\lis}\|\phi\|_{\lis}+\frac{\chi M}{p}\|\phi'\|_{\lis}\right)\ints\bup
			\leq \frac{(p-1)\epsilon}{2}\ints v^{p-2}\phvo\mgv^2+c_{4}.
	\end{split} \end{equation} 
	%	Using the Young's inequality, we have
	%	\begin{equation} \begin{split}
			%	\mu_1\ints\upk\phvo %\leq&\:\mu_1 \phi(\chi \|v\|_{\lis})\ints\upk\\
			%	\leq & \mu_1\ints\bu^{p+\kappa}\phvo+c_{5}(\phi(K)),\label{l4.5}
			%	\end{split} \end{equation} 
	%	where $c_5>0$. 
	Substitute \eqref{l4.4} in \eqref{l4.2}, to obtain
	\begin{equation} \label{l4.6} 
		\begin{split}
			\frac{1}{p}\dt\ints\bup\phvo+\frac{(p-1)\epsilon}{2}\ints v^{p-2}\phvo\mgv^2\leq&\:-(p-1)(1-\epsilon)\ints v^{p-2}\phvo\mgv^2 \\
			&+\ints\left((p-1)\phvo-2\povo\right)\upo\gv\cdot\gvo \\
			&-\ints\left(\frac{1}{p}\ptvo-\povo\right)\bup\mgvo^2  \\
			&+c_4.
	\end{split} \end{equation} 
	Again, using \eqref{l4.3}, we have
	\begin{equation} \begin{split}
			\ints\bup\phvo\leq\phi(\chi\nru_{\lis})\ints\bup\leq \frac{(p-1)\epsilon}{2}\ints v^{p-2}\phvo\mgv^2+c_{5}.\label{l4.7}
	\end{split} \end{equation} 
	Substitute \eqref{l4.7} in \eqref{l4.6} to obtain
	{\small
		\begin{align*}
			\frac{1}{p}\dt\ints\bup\phvo+&\ints\bup\phvo\leq-(p-1)(1-\epsilon)\ints v^{p-2}\phvo\mgv^2\\
			&+\ints\left((p-1)\phvo-2\povo\right)\upo\mgv\mgvo-\ints\left(\frac{1}{p}\ptvo-\povo\right)\bup\mgvo^2\\
			&+c_6\\
			\leq&\: -\ints\left(\sqrt{(p-1)(1-\epsilon)\phvo} v^{\frac{p-2}{2}}\mgv\right)^2
			-\ints\left(\sqrt{\frac{1}{p}\ptvo-\povo} v^\frac{p}{2}\mgvo\right)^2\\
			&+\ints\left((p-1)\phvo-2\povo\right)\upo\mgv\mgvo+c_6\\
			\leq&\: -\ints\left[\left(\sqrt{(p-1)(1-\epsilon)\phvo} v^{\frac{p-2}{2}}\mgv\right)^2+\left(\sqrt{\frac{1}{p}\ptvo-\povo} v^\frac{p}{2}\mgvo\right)^2\right.\\
			&-\left.2\sqrt{(p-1)(1-\epsilon)\phvo\left(\frac{1}{p}\ptvo-\povo\right)} v^{\frac{p-2}{2}+\frac{p}{2}}\mgv\mgvo\right]\\
			&-\ints2\sqrt{(p-1)(1-\epsilon)\phvo\left(\frac{1}{p}\ptvo-\povo\right)} v^{\frac{p-2}{2}+\frac{p}{2}}\mgv\mgvo\\
			&+\ints\left((p-1)\phvo-2\povo\right)\upo\mgv\mgvo+c_6.
	\end{align*}}
	Combining the terms in the above inequality, one gets
	\begin{align*}
		\frac{1}{p}\dt\ints\bup\phvo+&\ints\bup\phvo\leq\\ &-\ints\left[\sqrt{(p-1)(1-\epsilon)\phvo} v^{\frac{p-2}{2}}\mgv-\sqrt{\frac{1}{p}\ptvo-\povo} v^\frac{p}{2}\mgvo\right]^2\\
		&+\ints\left((p-1)\phvo-2\povo-2\sqrt{(p-1)(1-\epsilon)\phvo\left(\frac{1}{p}\ptvo-\povo\right)}\right)\\
		&\times v^{p-1}\mgv\mgvo+c_6 
		%\leq&\: -\ints\left[\sqrt{(p-1)(1-\epsilon)\phvo} v^{\frac{p-2}{2}}\mgu-\sqrt{\frac{1}{p}\ptvo-\povo} v^\frac{p}{2}\mgvo\right]^2\\
		%&+\ints\psi v^{p-1}\mgv\mgvo+c_6,
	\end{align*}
	where $(p-1)\phvo-2\povo-2 \sqrt{ (p-1)(1-\epsilon)\phvo\left(\frac{1}{p}\ptvo-\povo\right)}=0$, for all $0\leq \vo\leq\chi K_1$. By ODE arguments, we can conclude that
	\begin{equation*} 
		\frac{1}{p}\ints\bup\phvo\leq c_6, \qquad \text{for all}\:\: t\in(0, \tmax).
	\end{equation*} 
	where $c_6>0$. This completes the proof.
\end{proof}

\subsection{The case $\tau=0$} 
In this section, we consider system \eqref{1.1}--\eqref{1.3} in the elliptic case, i.e., $\tau=0$, where the first and third equations are stationary. For clarity, we focus on the following simplified model
\begin{align}
\left\{
\begin{array}{llll}
	0= \Delta u-u(\alpha+\beta v+\gamma w)+f(x,t),\hspace*{0.5cm} &\text{in} \; \Omega\times(0,\tmax),\\
	v_t= \Delta v-\chi\nabla\cdot(v\gu)+\sigma_1 v(1-v+u)-\delta_1 v, &\text{in} \; \Omega\times(0,\tmax),\\
	0= \Delta w+\sigma_2 uw-\delta_2 w, &\text{in} \; \Omega\times(0,\tmax),\\
	\frac{\partial u}{\partial\nu}=\frac{\partial v}{\partial\nu}=\frac{\partial w}{\partial\nu}=0, &\text{on} \; \partial\Omega\times(0,\tmax),\\
	v(x,0)=v_0(x), &x\in\overline{\Omega}.
\end{array}
\right.\label{l3.2.1}
\end{align}
As observed above, suitable a priori estimates for $v$ are sufficient to guarantee global boundedness. In particular, we establish the following result.

\begin{lemma}\label{l3.2}
	For every $p>1$, there exists a constant $C>0$ such that the local solution $(u,v,w)$ of \eqref{l3.2.1} satisfies
	\begin{align*}
		\nvt_{\lps}\leq C,
		\qquad \text{for all } t\in(0,\tmax).
	\end{align*}
\end{lemma}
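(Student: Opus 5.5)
We test the second equation of \eqref{l3.2.1} against $v^{p-1}$ and integrate over $\Omega$. The key point in the elliptic case $\tau=0$ is that the cross-diffusion term can be rewritten using the first equation. Integration by parts gives
\begin{align*}
\frac1p\dt\ints v^p+(p-1)\ints v^{p-2}\mgv^2
=\chi(p-1)\ints v^{p-1}\gv\cdot\gu+\sigma_1\ints v^p(1-v+u)-\delta_1\ints v^p .
\end{align*}
Write $v^{p-1}\gv=\frac1p\nabla v^p$, integrate by parts once more and use the Neumann condition. Then
\begin{align*}
\chi(p-1)\ints v^{p-1}\gv\cdot\gu=-\frac{\chi(p-1)}{p}\ints v^p\lu .
\end{align*}
From the first equation of \eqref{l3.2.1} we have $-\lu=f-u(\alpha+\beta v+\gamma w)\le f\le M$, since $u,v,w\ge0$ by Lemma \ref{l2.3}. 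Hence the chemotactic term is bounded by $\frac{\chi(p-1)M}{p}\ints v^p$. This avoids any smallness assumption on $\chi$. Dropping the nonpositive part $-u(\alpha+\beta v+\gamma w)$ is exactly what makes the sign favorable.

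Next we treat the logistic part. By \eqref{l2.3.1}, $\|u\|_{L^\infty}\le K_1=M/\alpha$ when $\tau=0$, so
\begin{align*}
\sigma_1\ints v^p(1-v+u)\le \sigma_1(1+K_1)\ints v^p-\sigma_1\ints v^{p+1}.
\end{align*}
Altogether this yields
\begin{align*}
\frac1p\dt\ints v^p+\sigma_1\ints v^{p+1}\le \Big(\sigma_1(1+K_1)+\frac{\chi(p-1)M}{p}\Big)\ints v^p .
\end{align*}
We then add $\ints v^p$ to both sides. By Young's inequality, $c\ints v^p\le \frac{\sigma_1}{2}\ints v^{p+1}+c'|\Omega|$ for any constant $c$. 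Using Young once more in the form $\ints v^p\le \frac{\sigma_1}{2}\ints v^{p+1}+c''$, we arrive at
\[
\dt\ints v^p+p\ints v^p\le C .
\]
Lemma \ref{l2.1}, applied with exponent $1$, then gives $\ints v^p\le\max\{\ints v_0^p,\,C/p\}$ for all $t\in(0,\tmax)$, which is the claim. As an alternative, the Gagliardo--Nirenberg argument used in Lemma \ref{l3.1}, based on the $L^1$ bound \eqref{l2.3.3}, could replace the superlinear absorption; here the logistic damping already suffices.

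The main obstacle is justifying the identity $\chi(p-1)\ints v^{p-1}\gv\cdot\gu=-\frac{\chi(p-1)}{p}\ints v^p\lu$ together with the sign of $\lu$. This requires $u\in\mathcal C^{2}$ up to the boundary for each $t$, with $\partial_\nu u=0$. Both are provided by Lemma \ref{l2.2} in the case $\tau=0$. It also requires nonnegativity of $u$, $v$ and $w$, which Lemma \ref{l2.3} supplies. Once these regularity and sign facts are in place, the remaining computation is routine.
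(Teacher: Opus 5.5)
Your proof is correct. Its central step is the same as the paper's: you integrate by parts twice to turn the taxis contribution into $-\frac{\chi(p-1)}{p}\int_\Omega v^p\Delta u$. You then use the elliptic equation, $-\Delta u=f-u(\alpha+\beta v+\gamma w)\le M$, to bound it by a multiple of $\int_\Omega v^p$, with no smallness condition on $\chi$.

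Where you differ is in how you absorb the resulting linear growth term $c\int_\Omega v^p$. The paper throws away the logistic absorption by bounding $\sigma_1 v^p(1-v+u)\le \sigma_1(1+\|u\|_{L^\infty(\Omega)})v^p$. It keeps the diffusion term $-\frac{4(p-1)}{p}\int_\Omega|\nabla v^{p/2}|^2$ and absorbs $c\int_\Omega v^p$ into it through the Gagliardo--Nirenberg estimate \eqref{l4.3}. That estimate relies on the mass bound \eqref{l2.3.3} from Lemma \ref{l2.3}, and a second use of it produces the $-c_2\Phi$ term needed for the ODE comparison.

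You do the reverse. You discard the diffusion and let the quadratic degradation $-\sigma_1\int_\Omega v^{p+1}$ do all the work, via Young's inequality $c\int_\Omega v^p\le\frac{\sigma_1}{2}\int_\Omega v^{p+1}+c'|\Omega|$.

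Your route is more elementary and more self-contained. It needs no interpolation inequality and no exponent $r_0$ depending on $n$. It also does not use the $L^1$ estimate, which is itself derived from the same logistic term.

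The paper's route has its own advantage. It only needs some a priori $L^1$ bound on $v$, so it would survive if the source lacked superlinear damping. It also reuses the machinery already set up for the case $\tau=1$ in Lemma \ref{l3.1}.
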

	\begin{proof}
		We begin with the differentiation of the functional $\Phi(t)=\into v^p$ where we apply the divergence theorem 
		\begin{equation*}
			\begin{split}
				\Phi'(t)=&p\into v^{p-1}\Delta v - p\chi\into v^{p-1}\nabla\cdot \left( v\nabla u\right)+\sigma_1\into v^p(1-v+u)-\delta_1 \into v^p\\
				\leq&-p(p-1)\into v^{p-2}\left|\nabla v \right|^2+p(p-1)\chi\into v^{p-1}\nabla u\cdot\nabla v+\sigma_1(1+\nru_{\lis})\ints v^p\\
				\leq&-\frac{4(p-1)}{p}\into \left|\nabla v^{\frac p2}\right|^2-(p-1)\chi\into v^p\Delta v +\sigma_1(1+\nru_{\lis})\ints v^p\\
			\end{split}
		\end{equation*}
		We now make use of the second equation in conjunction with the fact that $u,v$ and $w$ are positive and $w$ is bounded, so to write
		\begin{equation}\label{l3.2.2}
			\begin{split}
				\Phi'(t)=&-\frac{4(p-1)}{p}\into \left|\nabla v^{\frac p2}\right|^2-(p-1)\chi\into v^pu\left(\alpha+\beta v+\gamma w\right)+ (p-1)\chi\into v^p f(x,t)\\
				&+\sigma_1(1+\nru_{\lis})\ints v^p\\
				\leq&-\frac{4(p-1)}{p}\into \left|\nabla v^{\frac p2}\right|^2+ \left((p-1)\chi M+\sigma_1(1+\nru_{\lis})\right)\into v^p
			\end{split}
		\end{equation}
		we can say that, for proper positive constant $c_1$,
		\begin{equation}\label{gn}
			\begin{split}
				\left((p-1)\chi M+\sigma_1(1+\nru_{\lis})\right)\into v^p
				\leq&\frac{2(p-1)}{p} \into |\nabla v^{\frac p2} |^2 +c_1 \quad \text{on $(0,\tmax)$,}
			\end{split}
		\end{equation}
		where in the last step we applied \eqref{l4.3}. In this way \eqref{l3.2.2} is turned into 
		\begin{equation*}
			\Phi'(t)\leq-\frac{2(p-1)}{p}\into \lvert\nabla v^{\frac p2}\rvert^2+c_1 \quad \text{on $(0,\tmax)$,}
		\end{equation*}
		which from \eqref{gn} gives this initial problem with $c_2, c_3>0$
		%    Again relying on the Gagliardo--Nirenberg inequality, making analogous calculations, we arrive at this initial problem, for some positive constants $c_4$ and $c_5$
		\begin{align*}
			\left\{
			\begin{array}{llll}
				\Phi'(t)\leq c_3-c_2\Phi(t) & \text{on $(0,\tmax)$},\\
				\Phi(0)=\into v_0^p.
			\end{array}
			\right.
		\end{align*}
		Naturally, the above problem is solvable and $\Phi(t)\leq\displaystyle{ \max \left\{\into v_0^p, \frac{c_3}{c_2}\right\}}$.
\end{proof}

\begin{proof}[Proof of Theorem \ref{t1.1}]
	We first consider the case $\tau=0$. In this case, the conclusion follows directly from Lemmas \ref{l3.2} and \ref{l2.4}. Next, let $\tau=1$. To apply Lemma \ref{l3.1} (and subsequently Lemma \ref{l2.4}), it suffices to verify condition \eqref{l6.1}. Let $	N:=\chi K_1< \pi\sqrt{\frac{2}{n}}$. Choose
	\begin{align*}
		\epsilon
		=
		\frac{\pi^2-\frac{n}{2}N^2}
		{\pi^2+\left(\frac{n}{2}\right)^2N^2}\: \in (0, 1).
	\end{align*}
	Since $N<\pi\sqrt{\frac{2}{n}}$, we have $\epsilon\in(0,1)$. Moreover, $p>\frac{n}{2}$ yields
	\begin{align*}
		N
		<
		\frac{\pi(1-\epsilon)}
		{\sqrt{p\bigl(1+(p-1)\epsilon-p\epsilon^2\bigr)}}
		=
		\frac{\pi}{2m}.
	\end{align*}
	Hence condition \eqref{l6.1} holds. Lemma \ref{l3.1} now applies, and combining it with Lemma \ref{l2.4} gives the desired conclusion.
\end{proof}

\section{Lower bound for the finite-time blow-up of solutions in \texorpdfstring{$\mathbb{R}^{3}$}{R3}}
\label{sec4}
This section is devoted to deriving a lower bound for the maximal existence time of solutions in $\mathbb{R}^3$. In this direction, we rely on the following result.

\begin{lemma}\label{l4.1}
	Suppose that $\Omega\subset\mathbb{R}^3$ is a bounded convex domain and that $\tau\in\{0,1\}$. Let $(u,v,w)$ be a solution of \eqref{1.1} that blows up at the finite time $\tmax$, in the sense that
	\begin{align*}
		\limsup_{t\to \tmax}\|v(\cdot,t)\|_{L^\infty(\Omega)}=+\infty.
	\end{align*}
	Then there exist computable constants $\mathcal{A}_\tau$, $\mathcal{B}_\tau$, $\mathcal{C}_\tau$, and $\mathcal{D}_\tau$ such that
	\begin{equation}
		\Psi_\tau'(t)
		\leq
		\mathcal{A}_\tau \Psi_\tau(t)^3
		+ \mathcal{B}_\tau \Psi_\tau(t)^{\frac32}
		+ \mathcal{C}_\tau \Psi_\tau(t)^\tau
		+ \tau \mathcal{D}_\tau,
		\qquad t\in(0,\tmax),
		\label{l4.1.1}
	\end{equation}
	where $\Psi_\tau$ is defined in \eqref{t1.3.1}.
\end{lemma}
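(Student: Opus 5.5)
We differentiate each of the three pieces of $\Psi_\tau$ separately along the classical solution given by Lemma \ref{l2.2}. Throughout we use the pointwise bounds $0\le u\le K_1$ from Lemma \ref{l2.3} and the nonnegativity of $u,v,w$. The goal is to express every right-hand side contribution through $\into v^3$, $\into |\nabla v|^2$, $\into|\nabla u|^4$ and $\into|\nabla w|^2$. Lemma \ref{l2.6}, applied with $\mathcal F=v$, then converts $\into v^3$ into powers of $\into v^2\le\Psi_\tau$ plus a small multiple of $\into|\nabla v|^2$, which is absorbed by diffusion.

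\emph{The $v$-part.} Testing the second equation of \eqref{1.1} by $v$ gives
\begin{align*}
\frac{\mathrm d}{\mathrm dt}\into v^2=-2\into\mgv^2+2\chi\into v\gu\cdot\gv+2\sigma_1\into v^2(1+u)-2\sigma_1\into v^3-2\delta_1\into v^2.
\end{align*}
For $\tau=0$ we integrate the cross term by parts once more, $2\chi\into v\gu\cdot\gv=-\chi\into v^2\lu$. Inserting the elliptic equation $\lu=u(\alpha+\beta v+\gamma w)-f\ge -M$ bounds it by $\chi M\into v^2$, so no $\gu$-term survives. Then $\frac{\mathrm d}{\mathrm dt}\into v^2\le c\into v^2$, which is the $\mathcal C_0\Psi_0^0$-type term up to adjusting constants; we would either keep it as is or bound $\Psi_0\le 1+\Psi_0^{3/2}$. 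For $\tau=1$ we use Young's inequality:
\begin{align*}
2\chi\into v\gu\cdot\gv\le\into\mgv^2+\chi^2\into v^2\mgu^2\le\into\mgv^2+\tfrac{\chi^2}{2}\into v^4\ \text{-free form}\ \ldots
\end{align*}
More carefully, we will bound it by $\into|\gv|^2+\frac{\chi^2}2\into|\gu|^4+\frac{\chi^2}{2}\into v^4$. Since $v^4$ is not controlled by Lemma \ref{l2.6}, we prefer Hölder in the form $\into v^2\mgu^2\le(\into v^3)^{2/3}(\into\mgu^6)^{1/3}$. This forces the gradient equation below to produce a dissipative $\into\mgu^2|D^2u|^2$ term. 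The main obstacle is precisely this coupling: closing the estimate with exponents compatible with $\Psi^3$ and $\Psi^{3/2}$.

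\emph{The $\nabla u$- and $\nabla w$-parts ($\tau=1$).} Using $\nabla\Delta u\cdot\gu=\frac12\Delta\mgu^2-|D^2u|^2$ and the convexity of $\Omega$ (so $\partial_\nu\mgu^2\le0$), we obtain
\begin{align*}
\frac{\mathrm d}{\mathrm dt}\into\mgu^4\le-4\into\mgu^2|D^2u|^2-2\into\bigl|\nabla\mgu^2\bigr|^2-4\into\mgu^2\gu\cdot\nabla\bigl(u(\alpha+\beta v+\gamma w)-f\bigr).
\end{align*}
After one integration by parts the $v$-term becomes $\int u v\,\nabla\cdot(\mgu^2\gu)$, bounded by $c\int v\mgu^2|D^2u|$. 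Young's inequality gives $\int\mgu^2|D^2u|^2+c\int v^2\mgu^2$, and the $w$-term is treated likewise. The $f$-term is bounded by $cM\into\mgu^3\le c(\into\mgu^4+1)$, which produces $\mathcal C_1\Psi_1+\mathcal D_1$. The remaining $\int v^2\mgu^2$ is split by Young as $\int v^3+\int\mgu^6$. The sextic term is controlled by the Sobolev embedding of $\mgu^2\in W^{1,2}$ in three dimensions together with the absorbed $\int|\nabla\mgu^2|^2$. For $w$, testing the third equation by $-\Delta w$ gives $\frac{\mathrm d}{\mathrm dt}\into\mgw^2\le-2\into|\Delta w|^2+2\sigma_2\into uw|\Delta w|$, which reduces to $c\into w^2\le c$ because $w$ is bounded in $L^1$ or $L^\infty$ by Lemma \ref{l2.3}; alternatively we write it as $c\Psi_1+c$.

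\emph{Closing.} Summing the three parts yields
\begin{align*}
\Psi_\tau'\le -c\into\mgv^2+c\into v^3+\mathcal C\Psi_\tau^\tau+\tau\mathcal D.
\end{align*}
Applying Lemma \ref{l2.6} with $\mathcal F=v$ and $\omega$ chosen so that $cA_3\omega$ equals the dissipation coefficient gives $c\into v^3\le\mathcal A_\tau(\into v^2)^3+\mathcal B_\tau(\into v^2)^{3/2}\le\mathcal A_\tau\Psi_\tau^3+\mathcal B_\tau\Psi_\tau^{3/2}$, which is \eqref{l4.1.1}. The blow-up hypothesis is used only through Remark \ref{r2.1}, which justifies treating $\Psi_\tau$ as the relevant quantity; the differential inequality itself holds on all of $(0,\tmax)$.
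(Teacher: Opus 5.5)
Your plan is essentially the paper's proof. You differentiate $\int_\Omega|\nabla u|^4$, $\int_\Omega v^2$ and $\int_\Omega|\nabla w|^2$ separately, use $\nabla u\cdot\nabla\Delta u=\frac12\Delta|\nabla u|^2-|D^2u|^2$ with convexity, split $v^2|\nabla u|^2$ by Young's inequality into $v^3$ and $|\nabla u|^6$, and convert the cubic integrals with Lemma~\ref{l2.6}. Your local variations are fine:
\begin{itemize}
\item You integrate the $\beta uv$ term of the $|\nabla u|^4$-equation by parts and absorb it into $\int_\Omega|\nabla u|^2|D^2u|^2$. The paper instead estimates $\int_\Omega|\nabla u|^3|\nabla v|$ by Young's inequality against the $\int_\Omega|\nabla v|^2$ dissipation from $I_2$.
\item Your $\tau=0$ argument, $\Psi_0'\le(\chi M+2\sigma_1(1+K_1))\Psi_0\le c\Psi_0^{3/2}+c$, is simpler than the paper's detour through $\int_\Omega v^3$.
\end{itemize}

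Two points in your write-up must be corrected.

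\emph{The sextic term.} Your closing display has no room for $\int_\Omega|\nabla u|^6$. It cannot be absorbed via a Sobolev embedding with a remainder linear in $\Psi_1$; replace $u$ by $\lambda u$ and let $\lambda\to\infty$ to see this. Lemma~\ref{l2.6} with $\mathcal F=|\nabla u|^2$, as in the paper, gives $A_1(\int_\Omega|\nabla u|^4)^{3/2}+A_2\omega^{-3}(\int_\Omega|\nabla u|^4)^3+A_3\omega\int_\Omega\bigl|\nabla|\nabla u|^2\bigr|^2$. These $\Psi_1^3$ and $\Psi_1^{3/2}$ terms must be carried into $\mathcal A_1$ and $\mathcal B_1$; only then does the stated inequality follow.

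\emph{Control of $w$.} Lemma~\ref{l2.3} gives no $L^1$ bound for $w$. It gives $\|w\|_{L^\infty}\le K_2$ only if $K_1\le\delta_2/\sigma_2$, which is not assumed here. Nor does $\int_\Omega w^2\le c\Psi_1+c$ hold, since $\Psi_1$ contains only $\nabla w$. The paper tacitly uses the same $L^\infty$ bound (its $\mathcal D_1$ contains $\|w\|_{L^\infty}$). So either state this assumption, or use the $T_{\max}$-dependent comparison bound $\|w\|_{L^\infty}\le\|w_0\|_{L^\infty}e^{(\sigma_2K_1-\delta_2)_+T_{\max}}$. Also, integrating the $\gamma uw$ term by parts creates a further dependence on $w$. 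The paper avoids this by writing $\nabla(uw)=w\nabla u+u\nabla w$, discarding $-4\gamma\int_\Omega w|\nabla u|^4\le 0$, and estimating the rest against $\int_\Omega|\nabla w|^2$.
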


\begin{proof}
%	We will distinguish the fully parabolic case, i.e., $\tau=1$, and the elliptic-parabolic-elliptic case, i.e., $\tau=0$.\\
	The case $\tau=1$. We have to deal with the following  energy function 
	\begin{align}
		\Psi_1(t) := \ints \mgu^4 + \ints v^2  + \ints \mgw^2 \qquad \text{for all}\; t\in(0, \tmax). \label{l4.1.2}
	\end{align}
Let us denote the integrals in \eqref{l4.1.2} by $I_1(t)$, $I_2(t)$, and $I_3(t)$ respectively. Let us then compute their derivatives one at a time. For the first integral $I_1(t)$
\begin{align*}
	\dt I_1=&4\ints \mgu^2\: \gu\cdot\nabla\left(\lu-\alpha u-\beta uv-\gamma vw+f\right) \\
	= & 4\ints \mgu^2\:\gu\cdot \nabla\lu-4\alpha\ints \mgu^4-4\beta\ints u \mgu^3\:\gv-4\beta\ints v \mgu^4\\
	&-4\gamma\ints u \mgu^3\:\gw-4\gamma\ints w \mgu^4+4\ints \mgu^3\:\nabla f
\end{align*}
Moreover by virtue of the identity $2\gu\cdot\nabla\lu=\Delta(\mgu^2)-2|D^2u|^2$ and neglecting the  non-negative terms, we get
\begin{align*}
	\dt I_1\leq & 2\ints \mgu^2\Delta(\mgu^2)-4\ints \mgu^2|D^2u|^2+4\beta\nru_{\lis}\ints  \mgu^3\:\gv\\
	&+4\gamma\nru_{\lis}\ints \mgu^3\:\gw+4\ints \mgu^3\:\nabla f
	\end{align*}
Using Young's inequality to the third and fourth term of the above inequality, we get
\begin{align*}
	\dt I_1\leq & 2\ints \mgu^2\Delta(\mgu^2)-4\ints \mgu^2|D^2u|^2+\ints \mgv^2+4\beta^2\nru_{\lis}^2\ints  \mgu^6\\
	&+\delta_2\ints\mgw^2+\frac{4\gamma^2\nru_{\lis}^2}{\delta_2}\ints \mgu^6+\ints\mgu^6+4M^2\momeg.
\end{align*}
Combining the terms, one can get
\begin{align*}
	\dt I_1\leq & 2\ints \mgu^2\Delta(\mgu^2)-4\ints \mgu^2|D^2u|^2+\ints \mgv^2+\delta_2\ints\mgw^2+B_1\ints \mgu^6+4M^2\momeg.
\end{align*}
where $B_1:=4\beta^2\nru_{\lis}^2+\frac{4\gamma^2\nru_{\lis}^2}{\delta_2}+1$. 
Recall \ref{l2.6} with the choice of $\zeta_1=\frac{1}{2A_3}$, we obtain
\begin{align}
	\dt I_1\leq & -2\ints |\nabla\mgu^2|^2-4\ints \mgu^2|D^2u|^2+\ints \mgv^2+\delta_2\ints\mgw^2+B_1A_1\left(\ints\mgu^4\right)^\frac32\nonumber\\
	&+\frac{B_1A_2}{\zeta_1^3}\left(\ints\mgu^4\right)^3+B_1A_3\zeta_1\ints|\nabla\mgu^2|^2+4M^2\momeg.\nonumber\\
	\leq & -4\ints \mgu^2|D^2u|^2+\ints \mgv^2+\delta_2\ints\mgw^2+B_1A_1\left(\ints\mgu^4\right)^\frac32\nonumber\\
	&+\frac{B_1^4A_2A_3^3}{8}\left(\ints\mgu^4\right)^3+4M^2\momeg.\label{l4.1.3}
\end{align}
With the second term $I_2(t)$ we obtain, once again by relying on the divergence theorem
\begin{align}
	\dt I_2\leq& -2\ints v\lv-2\chi\ints v\nabla\cdot(v\gu)+2\sigma_1(1+\nru_{\lis})\ints v^2-2\sigma_1\ints v^3\nonumber\\
	\leq & -2\ints\mgv^2+2\chi\ints v\gu\cdot\gv+2\sigma_1(1+\nru_{\lis})\ints v^2\label{l4.1.4}
\end{align}
Using Young's inequality, we get
\begin{align*}
	\dt I_2\leq&  -2\ints\mgv^2+\frac12\ints \mgv^2+2\chi^2\ints v^3+\frac{8\chi^2}{27}\ints \mgu^6+2\sigma_1\ints v^3\\
	&+\frac{8\sigma_1}{27}(1+\nru_{\lis})^3\momeg-2\sigma_1\ints v^3\\
	\leq & -\frac32\ints \mgv^2+2\chi^2\ints v^3+\frac{8\chi^2}{27}\ints \mgu^6+\frac{8\sigma_1}{27}(1+\nru_{\lis})^3\momeg
\end{align*}
Recall \ref{l2.6} with the choice $\zeta_2=\frac{1}{4\chi^2A_3}$ and $\zeta_3=\frac{1}{16\chi^2A_3}$, we obtain
\begin{align}
	\dt I_2	\leq & -\frac32\ints \mgv^2+2\chi^2A_1\left(\ints v^2\right)^\frac32+\frac{2\chi^2A_2}{\zeta_2^3}\left(\ints v^2\right)^3+2\chi^2A_3\zeta_2\ints \mgv^2\nonumber\\
	&+\frac{8\chi^2A_1}{27}\ints \left(\mgu^4\right)^\frac32+\frac{8\chi^2A_2}{27\zeta_3^3}\ints \left(\mgu^4\right)^3+8\chi^2A_3\zeta_3\ints|\nabla\mgu^2|^2\nonumber\\
	&+\frac{8\sigma_1}{27}(1+\nru_{\lis})^3\momeg\nonumber\\
	\leq & -\ints \mgv^2+2\chi^2A_1\left(\ints v^2\right)^\frac32+2^7\chi^8A_2A_3^3\left(\ints v^2\right)^3\nonumber\\
	&+\frac{8\chi^2A_1}{27}\ints \left(\mgu^4\right)^\frac32+\frac{2^{16}\chi^8A_2A_3^3}{27}\ints \left(\mgu^4\right)^3+\frac12\ints|\nabla\mgu^2|^2\nonumber\\
	&+\frac{8\sigma_1}{27}(1+\nru_{\lis})^3\momeg\label{l4.1.5}
\end{align}
With analogous steps, last integral $I_3(t)$ leads to 
\begin{align*}
	\dt I_3=&2\ints \gw\cdot\nabla\Delta w+2\sigma_2 \ints \gw\cdot u\gw+2\sigma_2\ints \gw\cdot w\gu-2\delta_2\ints \mgw^2
\end{align*}
Using Young inequality
\begin{align*}
	\dt I_3\leq& \ints \Delta(\mgw^2)-2\ints |D^2w|^2+2\sigma_2 \nru_{\lis}\ints \mgw^2+2\sigma_2\nrw_{\lis}\ints \gw\cdot \gu\\
	&-2\delta_2\ints \mgw^2\\
	\leq& \ints \Delta(\mgw^2)-2\ints |D^2w|^2+2\sigma_2 \nru_{\lis}\ints \mgw^2+\delta_2\ints \mgw^2\\
	&+\frac{\sigma_2\nrw_{\lis}}{\delta_2}\ints \mgu^2-2\delta_2\ints \mgw^2\\
	\leq& \ints \Delta(\mgw^2)-2\ints |D^2w|^2+2\sigma_2 \nru_{\lis}\ints \mgw^2+\frac{\sigma_2\nrw_{\lis}}{\delta_2}\ints \mgu^2\\
	&-\delta_2\ints \mgw^2.
\end{align*}
Again using Young inequality,
\begin{align*}
	\dt I_3\leq& 2\sigma_2 \nru_{\lis}\ints \mgw^2+\ints \mgu^6-\delta_2\ints \mgw^2+\frac{2\sigma_2^3\nrw_{\lis}^3}{3^\frac32\delta_2^\frac32}\momeg.
\end{align*}
Recall \ref{l2.6} with the choice $\zeta_4=\frac{1}{2A_3}$, we get
\begin{align}
	\dt I_3\leq& 2\sigma_2 \nru_{\lis}\ints \mgw^2
	+A_1\left(\ints \mgu^4\right)^\frac32+\frac{A_2}{\zeta_4^3}\left(\ints\mgu^4\right)^3+A_3\zeta_4\ints|\nabla\mgu^2|^2\nonumber\\
	&-\delta_2\ints \mgw^2+\frac{2\sigma_2^3\nrw_{\lis}^3}{3^\frac32\delta_2^\frac32}\momeg\nonumber\\
	\leq& 2\sigma_2 \nru_{\lis}\ints \mgw^2
	+A_1\left(\ints \mgu^4\right)^\frac32+2^3A_2A_3^3\left(\ints\mgu^4\right)^3+\frac12\ints|\nabla\mgu^2|^2\nonumber\\
	&-\delta_2\ints \mgw^2+\frac{2\sigma_2^3\nrw_{\lis}^3}{3^\frac32\delta_2^\frac32}\momeg.\label{l4.1.6}
\end{align}
Combining \eqref{l4.1.3}, \eqref{l4.1.5} and \eqref{l4.1.6}, we obtain 
\begin{align*}
	\dt \Psi_1	\leq & -4\ints \mgu^2|D^2u|^2+\ints|\nabla\mgu^2|^2+\left(B_1A_1+\frac{8\chi^2A_1}{27}+A_1\right)\left(\ints \mgu^4\right)^\frac32\\
	&+\left(\frac{B_1^4A_2A_3^3}{8}+\frac{2^{16}\chi^8A_2A_3^3}{27}+2^3A_2A_3^3\right)\left(\ints\mgu^4\right)^3+2\chi^2A_1\left(\ints v^2\right)^\frac32\\
	&+2^7\chi^8A_2A_3^3\left(\ints v^2\right)^3+2\sigma_2 \nru_{\lis}\ints \mgw^2+\frac{8\sigma_1}{27}(1+\nru_{\lis})^3\momeg\\
	&+\frac{2\sigma_2^3\nrw_{\lis}^3}{3^\frac32\delta_2^\frac32}\momeg+4M^2\momeg.
\end{align*}
We invoked as well the inequality $|\nabla\mgu^2|^2\leq 4\mgu^2|D^2u|^2$ applied to the second term in RHS, 
\begin{align*}
	\dt \Psi_1	\leq & \left(B_1A_1+\frac{8\chi^2A_1}{27}+A_1\right)\left(\ints \mgu^4\right)^\frac32+2\chi^2A_1\left(\ints v^2\right)^\frac32\\
	&+\left(\frac{B_1^4A_2A_3^3}{8}+\frac{2^{16}\chi^8A_2A_3^3}{27}+2^3A_2A_3^3\right)\left(\ints\mgu^4\right)^3+2^7\chi^8A_2A_3^3\left(\ints v^2\right)^3\\
	&+2\sigma_2 \nru_{\lis}\ints \mgw^2+\frac{8\sigma_1}{27}(1+\nru_{\lis})^3\momeg+\frac{2\sigma_2^3\nrw_{\lis}^3}{3^\frac32\delta_2^\frac32}\momeg+4M^2\momeg\\
	\leq &  \mathcal{A}_1 \Psi_1(t)^3 + \mathcal{B}_1 \Psi_1(t)^\frac32 + \mathcal{C}_1 \Psi_1(t)+\mathcal{D}_1,
\end{align*}
with
\begin{align*}
	\mathcal{A}_1:=& \max\left\{1, \frac{B_1^4A_2A_3^3}{8}+\frac{2^{16}\chi^8A_2A_3^3}{27}+2^3A_2A_3^3, 2^7\chi^8A_2A_3^3\right\},\\
	\mathcal{B}_1:=& \max \left\{1, B_1A_1+\frac{8\chi^2A_1}{27}+A_1, 2\chi^2A_1 \right\},\\
	\mathcal{C}_1:= & 2\sigma_2 \nru_{\lis},\\
	\mathcal{D}_1:= & \frac{8\sigma_1}{27}(1+\nru_{\lis})^3\momeg+\frac{2\sigma_2^3\nrw_{\lis}^3}{3^\frac32\delta_2^\frac32}\momeg+4M^2\momeg.
\end{align*}
For $\tau=0$, the simplification shows that among the terms $I_1$, $I_2$ and $I_3$ in \eqref{l4.1.2}, only $I_2$ contributes to the analysis, so that the energy functional reduces to
\begin{align*} 
	\Psi_0(t) := \into v^2\quad \text{for all} \; t \in (0,\tmax).
\end{align*}
Subsequently,  from  \eqref{l4.1.4} we can write, by applying twice the divergence theorem,   
\begin{align}
	\dt I_2	\leq & -2\ints\mgv^2-\chi\ints v^2\lu+2\sigma_1(1+\nru_{\lis})\ints v^2
\end{align}
which thanks to the first equation in system \eqref{1.1} turns into
\begin{align*}
	\dt I_2	\leq & -2\ints\mgv^2-\chi\alpha\ints uv^2-\chi\beta\ints v^3-\chi\gamma\ints v^2w+\chi\ints f v^2\\
	&+2\sigma_1(1+\nru_{\lis})\ints v^2\\
	\leq & -2\ints\mgv^2+\left(\chi M+2\sigma_1(1+\nru_{\lis})\right)\ints v^2\\
		\leq & -2\ints\mgv^2+\ints v^3+\frac{4\momeg}{27}\left(\chi M+2\sigma_1(1+\nru_{\lis})\right)^3.
\end{align*}
Here, Young's inequality has been used in the final step. As the case $\tau=1$ has already been treated extensively, we omit the corresponding details. Using \eqref{l2.6.1}, one can estimate $\ints v^3$ in terms of $\ints \mgv^2$, $\left(\ints v^2\right)^3$ and $\left(\ints v^2\right)^\frac{3}{2}$. It follows that there exist positive constants $\mathcal{A}_0$, $\mathcal{B}_0$ and $\mathcal{C}_0$ such that
\begin{align*}
	\Psi_0'(t) \leq \mathcal{A}_0 \Psi_0(t)^3  + \mathcal{B}_0 \Psi_0(t)^\frac32 + \mathcal{C}_0 \quad \textrm{on} \quad (0,\tmax).
\end{align*} 	
This completes the proof.
\end{proof}

\begin{proof}[Proof of Theorem \ref{t1.3}]
	Let $\tau\in\{0,1\}$. Integrating \eqref{l4.1.1} over $(0,t)$, we obtain
	\begin{align}
		t
		\geq
		\int_{\Psi_\tau(0)}^{\Psi_\tau(t)}
		\frac{\mbox{d}\Psi}
		{\mathcal{A}_\tau \Psi^3
			+\mathcal{B}_\tau \Psi^{\frac32}
			+\mathcal{C}_\tau \Psi^\tau
			+\tau\mathcal{D}_\tau }.
		\label{t1.3.p1}
	\end{align}
	If $\Psi_\tau(t)$ blows up at time $\tmax$, then letting $t\to\tmax$ in \eqref{t1.3.p1} yields
	\begin{align}
		\tmax
		\geq
		\int_{\Psi_\tau(0)}^{+\infty}
		\frac{\mbox{d}\Psi}
		{\mathcal{A}_\tau \Psi^3
			+\mathcal{B}_\tau \Psi^{\frac32}
			+\mathcal{C}_\tau \Psi^\tau
			+\tau\mathcal{D}_\tau }.
		\label{t1.3.p2}
	\end{align}
	In view of Remark \ref{r2.1}, we have $\lim_{t\to\tmax}\Psi_\tau(t)=+\infty$,	which immediately yields \eqref{t1.3.3}. To derive the explicit lower bound in \eqref{t1.3.4}, we follow the argument of \cite[Remark 6.2]{marras2020}. Since $\lim_{t\to\tmax}\Psi_\tau(t)=+\infty$,
	there exists $t_0\in[0,\tmax)$ such that $\Psi_\tau(t)\geq \Psi_\tau(0)$ for all $t\in(t_0,\tmax)$. Consequently,
	\begin{align*}
	\Psi_\tau(t)^{\frac32}\leq\Psi_\tau(0)^{-\frac32}\Psi_\tau(t)^3,\: 
	\Psi_\tau(t)^\tau\leq\Psi_\tau(0)^{\tau-3}\Psi_\tau(t)^3,\:
	1\leq\Psi_\tau(0)^{-3}\Psi_\tau(t)^3,\quad t\in(t_0,\tmax).
	\end{align*}
	Therefore,
%	\[
%	\mathcal{A}_\tau \Psi_\tau(t)^3
%	+\mathcal{B}_\tau \Psi_\tau(t)^{\frac32}
%	+\mathcal{C}_\tau \Psi_\tau(t)^\tau
%	+\tau\mathcal{D}_\tau
%	\leq
%	\mathcal{E}_\tau\,\Psi_\tau(t)^3,
%	\]
	%Substituting this estimate into \eqref{t1.3.p2}, we obtain
	\[
	\tmax
	\geq
	\int_{\Psi_\tau(0)}^{+\infty}
	\frac{\mbox{d}\Psi}{\mathcal{E}_\tau\,\Psi^3}
	=
	\frac{1}{2\mathcal{E}_\tau\,\Psi_\tau(0)^2}.
	\]
	where $\mathcal{E}_\tau=\mathcal{A}_\tau +\mathcal{B}_\tau\Psi_\tau(0)^{-\frac32}
	+\mathcal{C}_\tau\Psi_\tau(0)^{\tau-3}
	+\tau\mathcal{D}_\tau\Psi_\tau(0)^{-3}$. This proves \eqref{t1.3.4} and completes the proof.
\end{proof}

\begin{remark}
	We note that an analysis analogous to that of Theorem \ref{t1.3} can also be carried out for domains in $\mathbb{R}^n$ with $n\geq 4$. In this setting, however, inequality \eqref{l2.6.1} is no longer applicable, and an alternative estimate is required. To overcome this difficulty, one may employ suitable Sobolev embedding inequalities, following the approach developed in \cite{and2017}.
\end{remark}

\section{Numerical Experiments}\label{sec5}
In this section, we perform finite element computations for the Fish–Mussels chemotaxis system in the three-dimensional domain $\Omega=[-0.5,0.5]^3$. The spatial domain is uniformly discretized using tetrahedral $P_1$ linear finite elements for all unknown variables, while the temporal discretization is carried out using the backward Euler (implicit Euler) scheme. The numerical implementation is performed using Python 3.13.11 (FEniCSx 0.10.0), and MATLAB R2024a is used for visualization and plotting.

The fully coupled nonlinear system arising from the backward Euler finite element discretization is solved using the PETSc SNES Newton line-search solver with relative tolerance $10^{-6}$, absolute tolerance $10^{-8}$, and a maximum of 20 Newton iterations. The resulting linearized systems are solved using the GMRES iterative solver preconditioned with LU factorization.

The FEM scheme is verified through mesh convergence studies demonstrating the expected order of accuracy. In addition, the computed PDE solutions are validated against the corresponding spatially homogeneous ODE model solved using MATLAB \texttt{ode45}, showing excellent agreement in the long-time dynamics and validation.

\subsection{Convergence study}

To perform the convergence study of the proposed finite element scheme, we consider the dimensionless parameter values $\chi = 2,
\alpha = \beta = \gamma = \sigma_1 = \sigma_2 = \delta_1 = \delta_2 = 1, f = 10$, with the time step chosen as $\Delta t = 0.25\,\Delta x^2$.
The initial conditions are chosen as
\begin{align*}
	u(\cdot,0)=0,
	\qquad
	v(\cdot,0)=15,
	\qquad
	w(\cdot,0)=5.
\end{align*}
Furthermore, to verify the convergence behaviour of the numerical scheme and compare the errors across different mesh levels, we compute the discrete numerical errors in the $\lts$ and $\lis$ norms over the time interval $t \in [0,1]$.

\begin{figure}[H]
	\centering
	\subfloat[]{\includegraphics[width=0.45\textwidth]{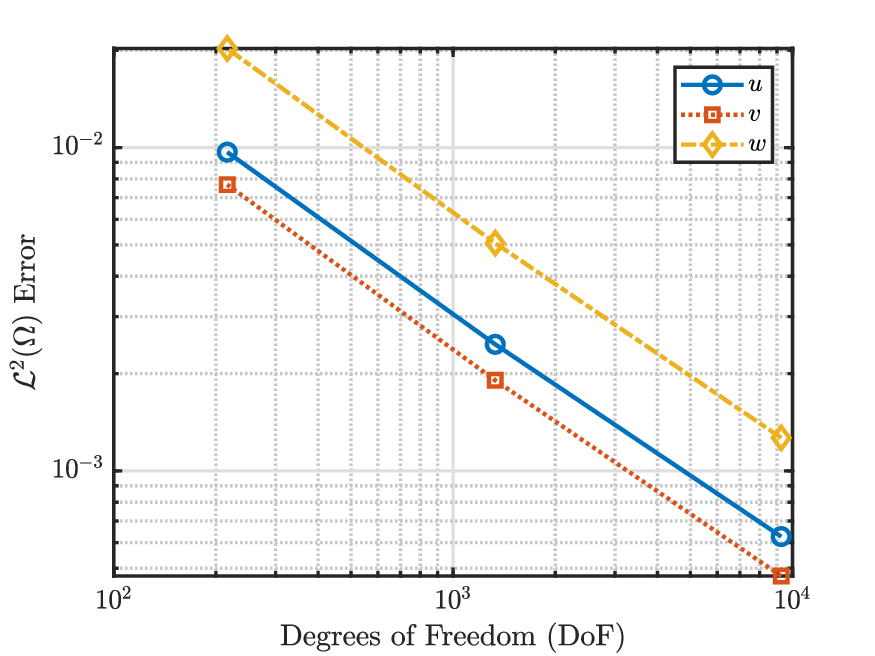}}
	\subfloat[]{\includegraphics[width=0.45\textwidth]{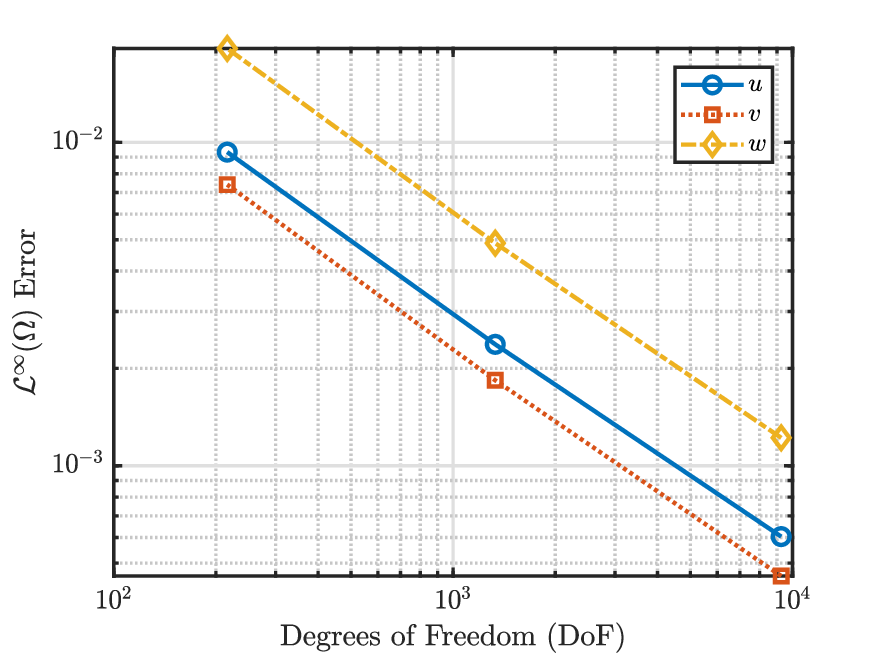}}
	\caption{Error plots of the unknown variables obtained at different mesh levels. Panels (A) and (B) represent the $\lts$ and $\lis$ errors versus the degrees of freedom (DOF), respectively.}
	\label{fig.5.1}
\end{figure}

\begin{table}[H]
	\centering
	\caption{Numerical error values and corresponding orders of convergence for $u(\cdot,t)$, $v(\cdot,t)$, and $w(\cdot,t)$ at different mesh levels.}
	\begin{tabular}{c c c c c c}
		\toprule
		{\bf Variable} & {\bf DOF} & {\bf $\lts$ error} & {\bf Order} & {\bf $\lis$ error} & {\bf Order} \\
		\midrule
		\multirow{2}{*}{$u$}
		& 216 to 1331  & 9.6822$\times 10^{-3}$ & ---    & 9.3302$\times 10^{-3}$ & ---    \\
		& 1331 to 9261 & 2.4630$\times 10^{-3}$  & 1.97491612  & 2.3735$\times 10^{-3}$ & 1.97491486\\
		\midrule
		
		\multirow{2}{*}{$v$}
		& 216 to 1331  & 7.6796$\times 10^{-3}$ & ---    & 7.4007$\times 10^{-3}$ & ---    \\
		& 1331 to 9261 & 1.9063$\times 10^{-3}$ & 2.01023606 & 1.8371$\times 10^{-3}$ & 2.01024513 \\
		\midrule
		
		\multirow{2}{*}{$w$}
		& 216 to 1331  & 2.0250$\times 10^{-2}$ & ---    & 1.9514$\times 10^{-2}$ & ---    \\
		& 1331 to 9261 & 5.0630$\times 10^{-3}$  & 1.99986027 & 4.8790$\times 10^{-3}$ & 1.99986704 \\
		\bottomrule
	\end{tabular}
	\label{tab.5.1}
\end{table}

%\begin{table}[H]
%\centering
%\caption{Numerical error values and corresponding orders of convergence for $u(\cdot,t)$ at different mesh levels.}
%\begin{tabular}{c c c c c}
%\toprule
%DOF & $L^2(\Omega)$ error & Order & $L^\infty(\Omega)$ error &  order\\
%\midrule
%216 to 1331 & 1.2265e-02 & 	--- & 9.3302e-03 & --- \\
%1331 to 9261 & 2.7382e-03 & 2.1632 & 2.3735e-03 & 1.9749\\
%\bottomrule
%\end{tabular}\label{tab.5.1}
%\end{table}
%
%
%\begin{table}[H]
%	\centering
%\caption{Numerical error values and corresponding orders of convergence for $v(\cdot,t)$ at different mesh levels.}
%\begin{tabular}{c c c c c}
%\toprule
%DOF & $L^2(\Omega)$ error & Order & $L^\infty(\Omega)$ error &  order\\
%\midrule
%216 to 1331 & 9.7280e-03 & --- & 7.4007e-03 & --- \\
%1331 to 9261 & 2.1193e-03 & 2.1985 & 1.8371e-03 & 2.0102\\
%\bottomrule
%\end{tabular}\label{tab.5.2}
%\end{table}
%
%
%\begin{table}[H]
%\centering
%\caption{Numerical error values and corresponding orders of convergence for $w(\cdot,t)$ at different mesh levels.}
%\begin{tabular}{c c c c c}
%\toprule
%DOF & $L^2(\Omega)$ error & Order & $L^\infty(\Omega)$ error &  order\\
%\midrule
%216 to 1331 & 2.5652e-02  & --- & 1.9514e-02  & --- \\
%1331 to 9261 & 5.6288e-03  & 2.1881 & 4.8790e-03  & 1.9998\\
%\bottomrule
%\end{tabular}\label{tab.5.3}
%\end{table}
The obtained numerical errors are illustrated in Figure~\ref{fig.5.1}. In Figure~\ref{fig.5.1}, the horizontal axis represents the logarithmic values of the degrees of freedom, while the vertical axis represents the logarithmic values of the corresponding errors. The linear behaviour observed in the log--log plots clearly demonstrates the second-order convergence of the proposed finite element scheme in both the $\lts$ and $\lis$ norms.

Moreover, Tables~\ref{tab.5.1} present the computed numerical errors and the corresponding orders of convergence for each unknown variable at different levels of mesh refinement. The numerical results confirm that the proposed finite element scheme achieves the optimal order of convergence when piecewise linear finite elements are employed.

\subsection{Long-time dynamics and validation}
In this subsection, we validate the proposed three-dimensional FEM scheme by comparing the long-time dynamics of the fish--mussel chemotaxis system with the numerical solutions of the corresponding spatially homogeneous ODE system obtained using MATLAB's built-in \texttt{ode45} solver. Let $(u,v,w)$ be a classical solution of system \eqref{1.1}-\eqref{1.3} satisfying \eqref{1.4}. The equilibrium point $(u^e,v^e,w^e)$ of system \eqref{1.1} are determined by the following system of equations
\begin{align}
	\left\{
	\begin{array}{r}
		-u^e(\alpha+\beta v^e+\gamma w^e)+f =\frac{\mbox{d}u^e}{\mbox{d}t}= 0,\\[2mm]
		\sigma_1(1-v^e+u^e)-\delta_1 v^e =\frac{\mbox{d}v^e}{\mbox{d}t}= 0,\\[2mm]
		\sigma_2 u^e w^e-\delta_2 w^e =\frac{\mbox{d}w^e}{\mbox{d}t}= 0.
	\end{array}
	\right.\label{5.1}
\end{align}
The coexistence equilibrium corresponds to a balanced ecological state in which all interacting species persist simultaneously. The coexistence steady state $(\ust,\vst,\wst)$ is given by $\displaystyle{
	\ust=\frac{\delta _2}{\sigma _2}}$, $\displaystyle{\vst=\frac{\delta _2 \sigma _1-\delta _1 \sigma _2+\sigma _2 \sigma _1}{\sigma _1 \sigma _2}, 
	\wst=\frac{f \sigma _1 \sigma _2^2+\beta  \delta _1 \delta _2 \sigma _2-\alpha  \delta _2 \sigma _1 \sigma _2-\beta  \delta _2^2 \sigma _1-\beta  \delta _2 \sigma _1 \sigma _2}{\gamma  \delta _2 \sigma _1 \sigma _2}}$.
For the coexistence equilibrium $(\ust,\vst,\wst)$ to be biologically feasible, that is, for all components to remain positive, the following conditions must hold $\delta_1\sigma_2 < \delta_2\sigma_1, \:
	\alpha+\beta+\frac{\beta\delta_2}{\sigma_2}
	<
	\frac{\beta\delta_1}{\sigma_1}
	+
	\frac{f\delta_2}{\sigma_2}$.
For the numerical simulations, we consider the three-dimensional domain
$\Omega=[-0.5,0.5]^3$, discretized using a uniform mesh with $\Delta x_i=\frac{1}{20}, i=1,2,3,$ and time step size $\Delta t=0.01$. The spatially homogeneous initial conditions are chosen as
\begin{align*}
	u(\cdot,0)=0,
	\qquad
	v(\cdot,0)=15,
	\qquad
	w(\cdot,0)=5.
\end{align*}
Moreover, the parameter values are taken as $\chi=1, \alpha=0.1, \beta=0.1, \gamma=0.1, \sigma_1=0.5, \sigma_2=0.1, \delta_1=0.1, \delta_2=1, f=0.2$. The numerical simulations indicate that the computed solution $(u,v,w)$ approaches the coexistence steady state $(\ust,\vst,\wst)$ as $t\to\infty$, demonstrating the capability of the proposed numerical scheme to capture the long-time dynamics of the system. 

Figure \ref{fig.5.2} illustrates the spatial distributions of the solution components $u(\cdot,t), v(\cdot,t)$, and $w(\cdot,t)$ at $t=4$, obtained using the proposed three-dimensional finite element method. It can be observed that all solution components become nearly spatially homogeneous over the computational domain indicating that the transient dynamics gradually stabilize toward the coexistence equilibrium state.

Figure~\ref{fig.5.3} demonstrates the asymptotic behaviour of the numerical solution $(u,v,w)$ obtained by both the MATLAB \texttt{ode45} solver and the proposed three-dimensional FEM. The time evolution profiles show that the nutrient density $u$, fish population $v$, and mussels population $w$ gradually stabilize and converge toward the coexistence steady state as $t\to 4$.

\begin{figure}[H]
	\centering
	\subfloat[$u(\cdot, t)$]{\includegraphics[width=0.33\textwidth]{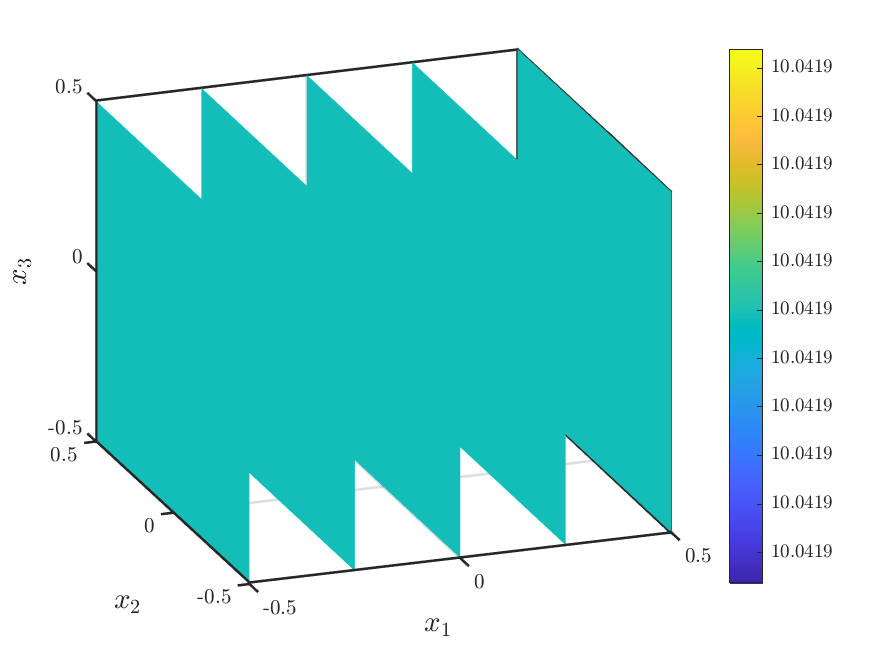}}
	\subfloat[$v(\cdot, t)$]{\includegraphics[width=0.33\textwidth]{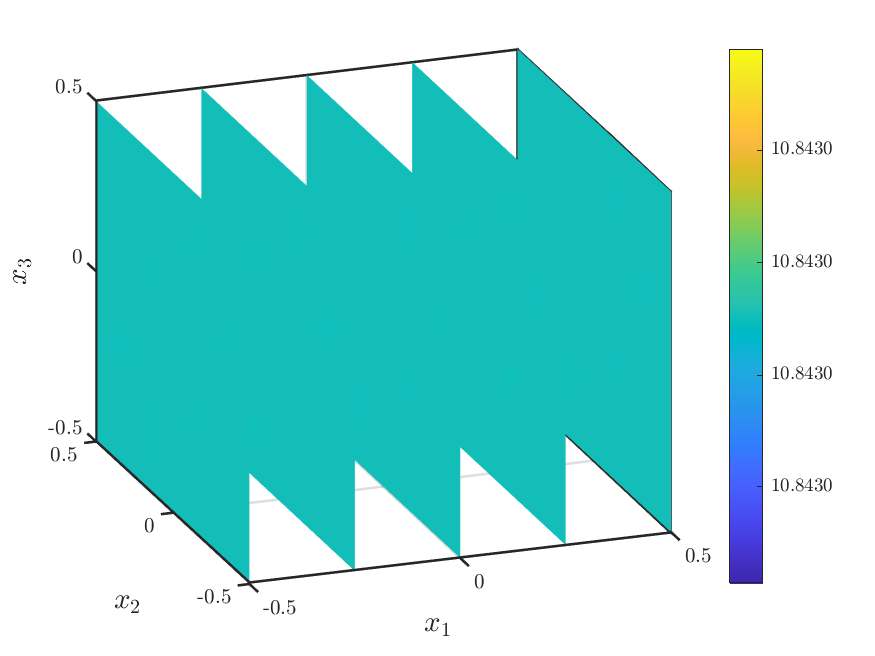}}
	\subfloat[$w(\cdot, t)$]{\includegraphics[width=0.33\textwidth]{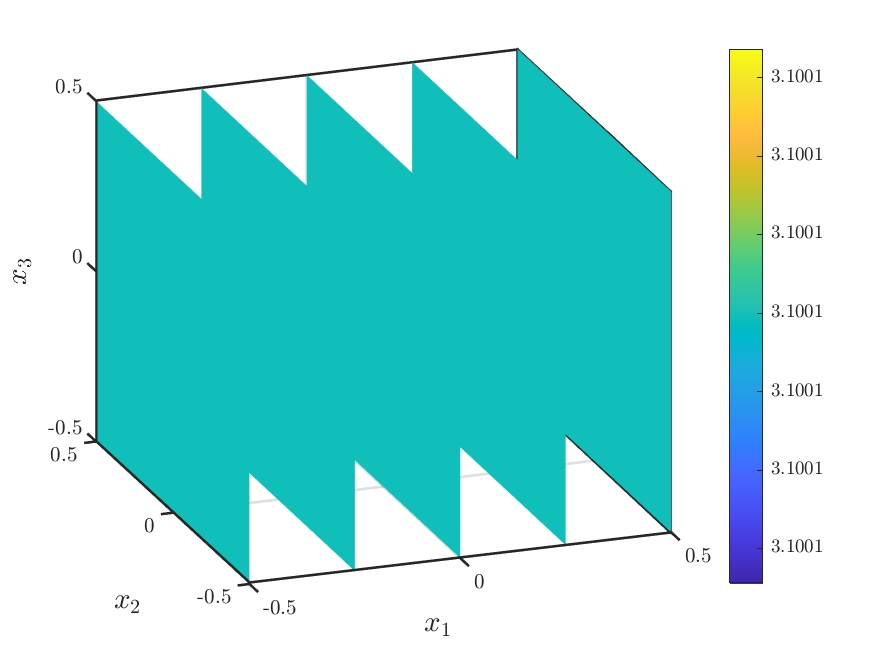}}
	\caption{Densities of the unknowns at $t=4$ by FEM.}
	\label{fig.5.2}
%\end{figure}
%\begin{figure}[H]
\centering
\subfloat[\texttt{ode45}]{\includegraphics[width=0.4\textwidth]{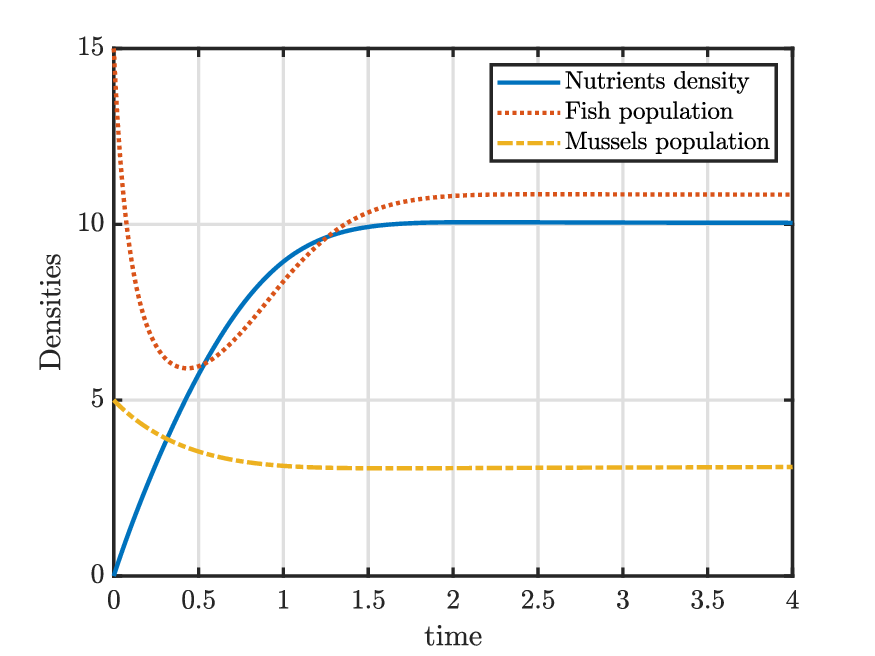}}
\subfloat[ FEM]{\includegraphics[width=0.4\textwidth]{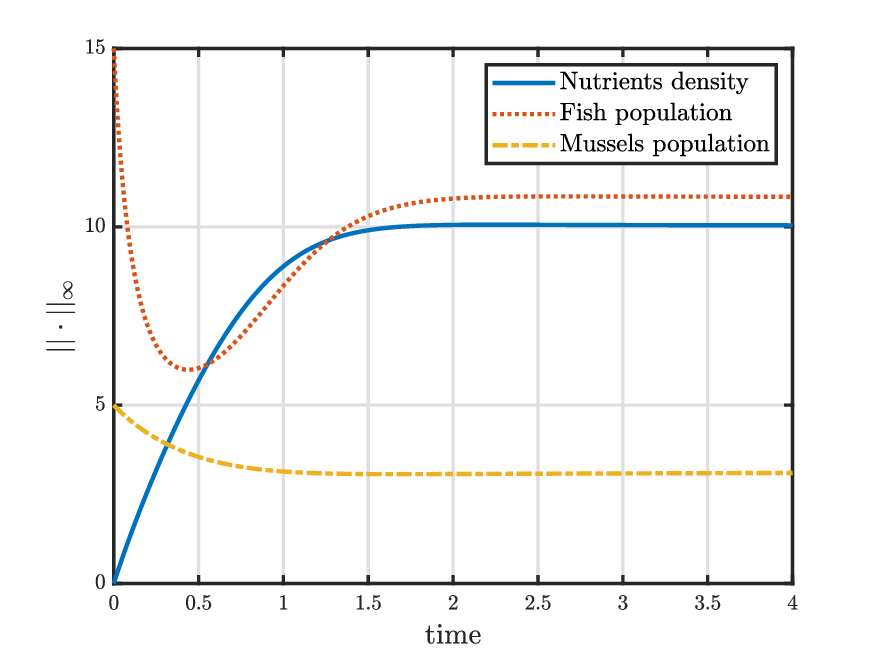}}
\caption{Comparison through asymptotic behaviour as $t\to 4$, where (A) Solution of \eqref{5.1} by \texttt{ode45}, (B) Solution of \eqref{1.1} by FEM.}
\label{fig.5.3}
\end{figure}
Table \ref{tab.5.4} presents the comparison between the solutions obtained using the MATLAB \texttt{ode45} solver and the FEM for considered system at different time levels. The numerical results demonstrate that the computed solutions gradually approach the coexistence steady state as time increases. In particular, the population densities stabilize near the equilibrium values confirming the long-time stability of the system. Moreover, the close agreement between the \texttt{ode45} and FEM results validates the accuracy and reliability of the proposed finite element numerical scheme in capturing the dynamics of the model.

\begin{table}[H]
	\centering
	\caption{Numerical validation table}
	\label{tab.5.4}
	\begin{tabular}{c ccc ccc ccc}
		\toprule
		& \multicolumn{3}{c}{\bf Solution of \eqref{5.1} by \texttt{ode45}} & \multicolumn{3}{c}{\bf Solution of \eqref{1.1} by FEM} 
		& \multicolumn{3}{c}{\bf Absolute error} \\
		\cmidrule(lr){2-4} \cmidrule(lr){5-7} \cmidrule(lr){8-10}
		\multicolumn{1}{c}{time} 
		& $u$ & $v$ & $w$ 
		& $u$ & $v$ & $w$
		& $u$ & $v$ & $w$ \\
		\midrule
		0.00 & 0.0000  & 15.0000 & 5.0000 & 0.0000  & 15.0000 & 5.0000 & 0 & 0 & 0\\
		1.00 & 8.9396  & 8.3778  & 3.1319 & 8.8917  & 8.3469  & 3.1397 & 0.0479 & 0.0309 & 0.0078\\
		2.00 & 10.0591 & 10.8067 & 3.0669 & 10.0739 & 10.7924 & 3.0691 & 0.0148 & 0.0143 & 0.0022\\
		3.00 & 10.0499 & 10.8520 & 3.0844 & 10.0493 & 10.8513 & 3.0861 & 0.0006 & 0.0007 & 0.0017\\
		4.00 & 10.0425 & 10.8436 & 3.0985 & 10.0418 & 10.8430 & 3.1000 & 0.0007 & 0.0006 & 0.0015\\
		\bottomrule
	\end{tabular}
\end{table}

\begin{figure}[H]
\includegraphics[width=0.5\textwidth]{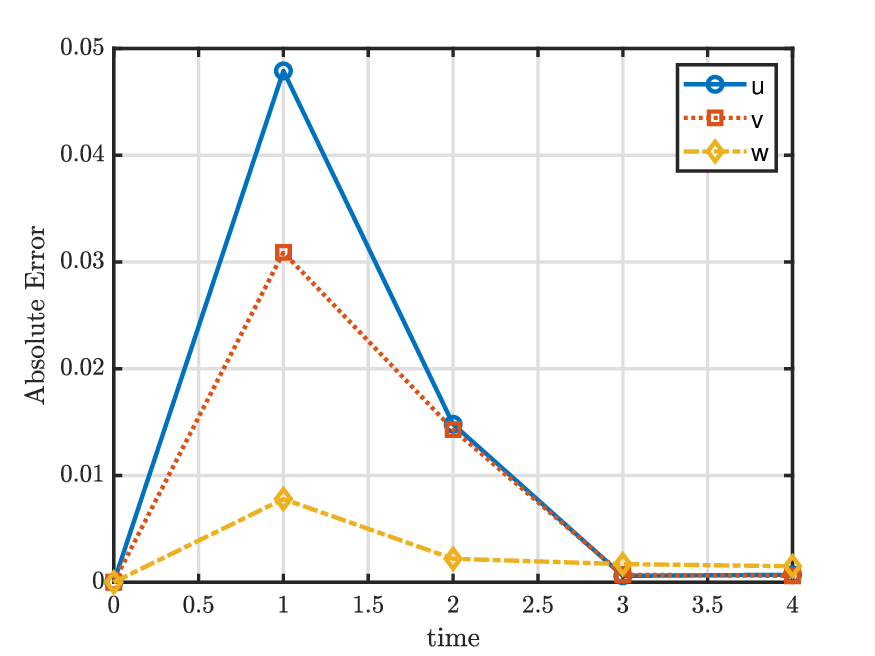}
\caption{Absolute error between \texttt{ode45} and FEM}
\label{fig.5.4}
\end{figure}

\subsection{Blow-up in a cubic domain}

Certain numerical observations indicate the possible occurrence of blow-up behaviour within a cubic domain. Although these results are preliminary in nature, they provide numerical evidence supporting the blow-up time estimate established in Theorem~\ref{t1.3}. The computations are carried out on the three-dimensional cubic domain
$[-0.5,0.5]^3,$ using a uniform spatial discretization with mesh size
$\Delta x_i=\frac{1}{20}, i=1,2,3.$ We consider the initial-boundary value problem associated with the Fish--Mussels chemotaxis system \eqref{1.1}, subject to the initial conditions
\begin{align}\label{5.2}
u(\cdot,0)=500e^{-500\sum_{i=1}^{3}x_i^2}, \quad v(\cdot,0)=1000e^{-1000\sum_{i=1}^{3}x_i^2}., \quad
w(\cdot,0)=800e^{-800\sum_{i=1}^{3}x_i^2}.
\end{align}
Moreover, the parameter values are chosen as $\tau=1, \chi=1, 
\alpha=\beta=\gamma=\sigma_1=\sigma_2=\delta_1=0.001, 
f=10$, and $\delta_2=0.5$.

\begin{figure}[H]
	\centering
	\includegraphics[width=0.32\textwidth]{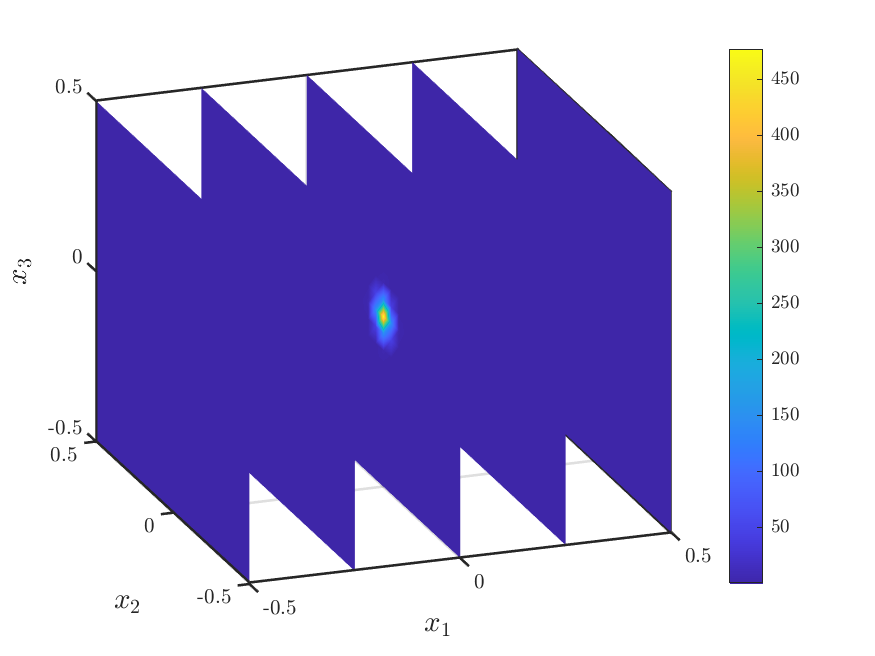}
	\includegraphics[width=0.32\textwidth]{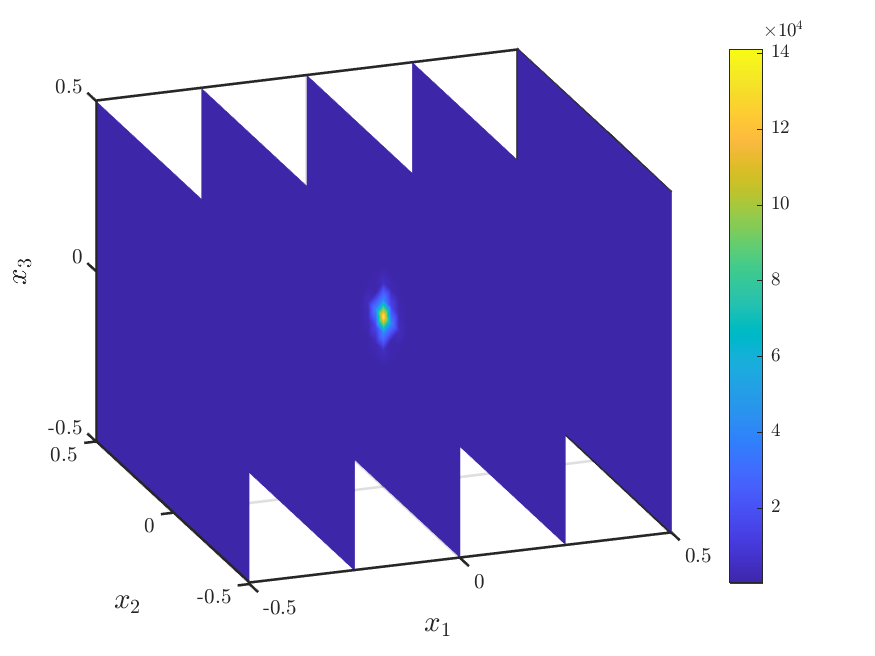}
	\includegraphics[width=0.32\textwidth]{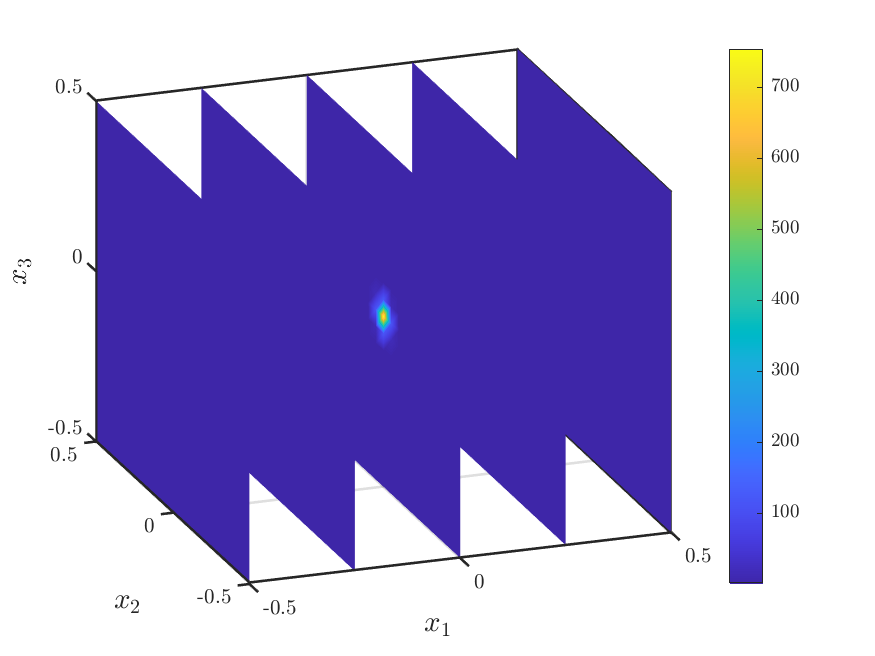}\\
	\subfloat[$u(\cdot,t)$]{\includegraphics[width=0.32\textwidth]{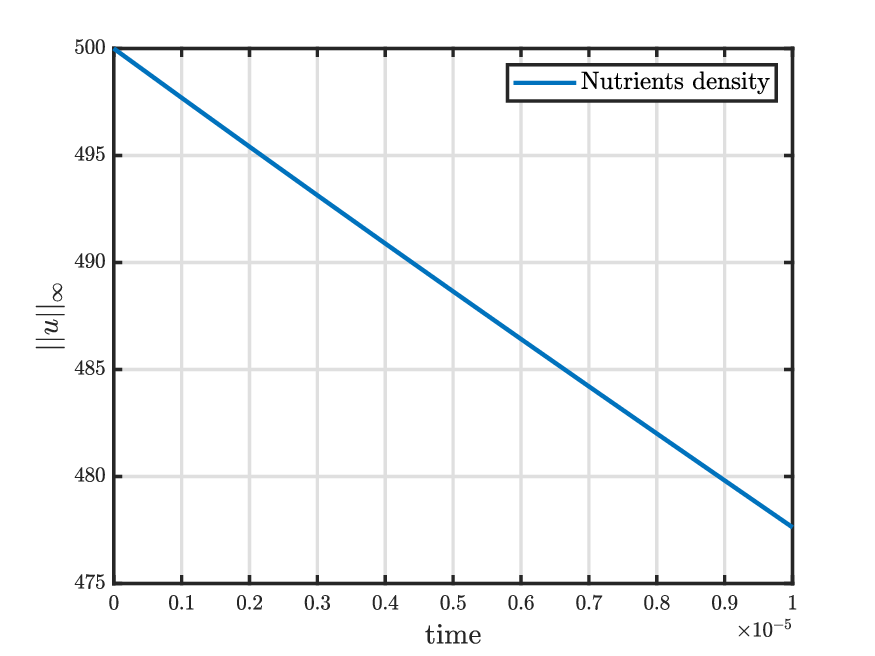}}
	\subfloat[ $v(\cdot,t)$]{\includegraphics[width=0.32\textwidth]{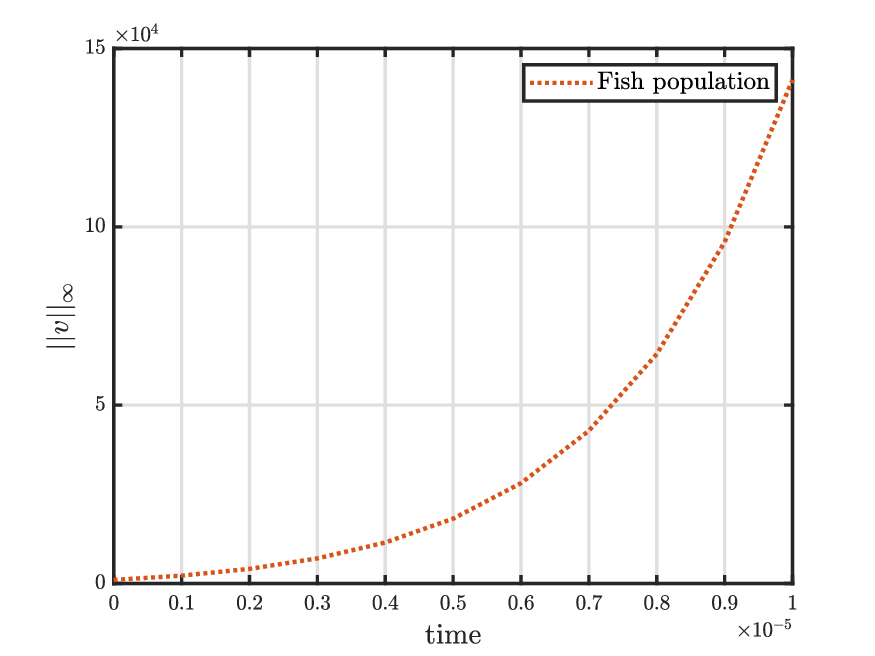}}
	\subfloat[ $w(\cdot,t)$]{\includegraphics[width=0.32\textwidth]{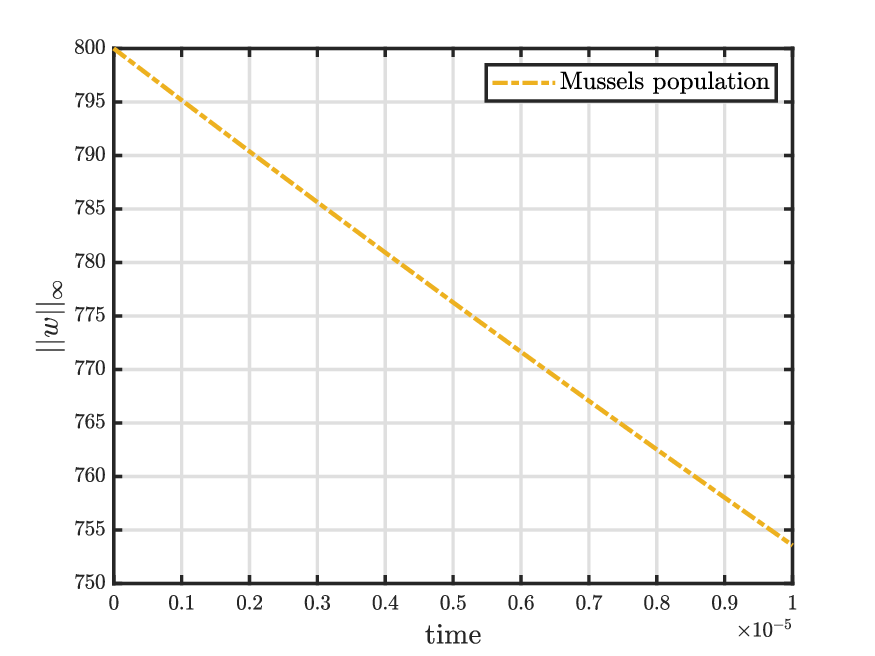}}
	\caption{Graphical representation of the evolution for unknowns at $t=10^{-5}$.}
	\label{fig.5.5}
\end{figure}
\begin{figure}[H]
	\centering
	\includegraphics[width=0.32\textwidth]{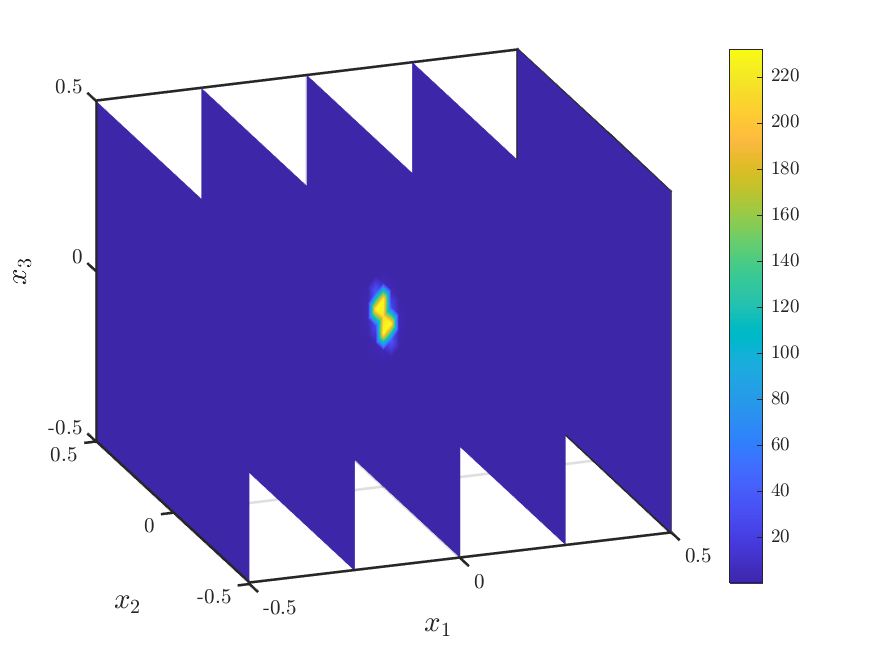}
	\includegraphics[width=0.32\textwidth]{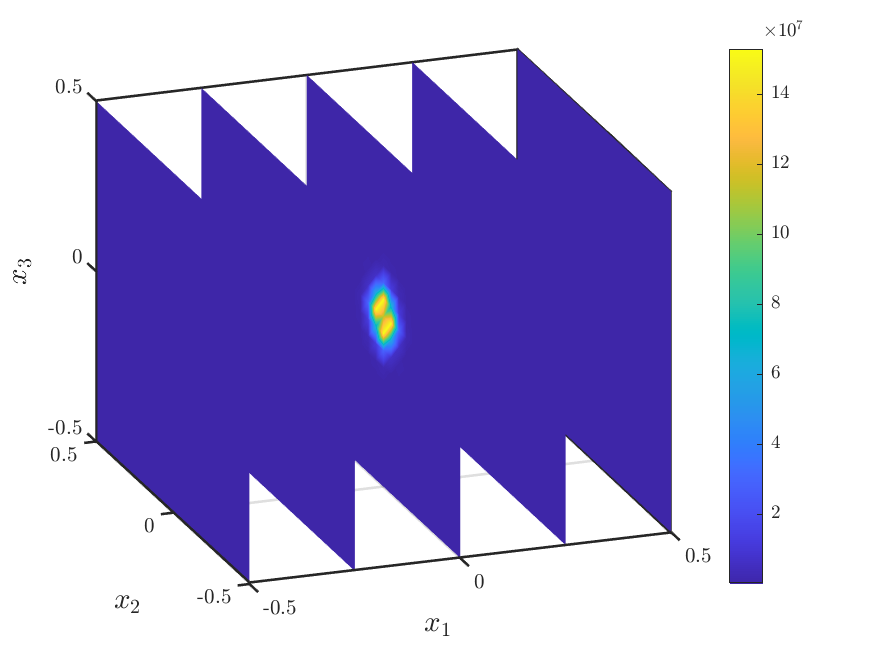}
	\includegraphics[width=0.32\textwidth]{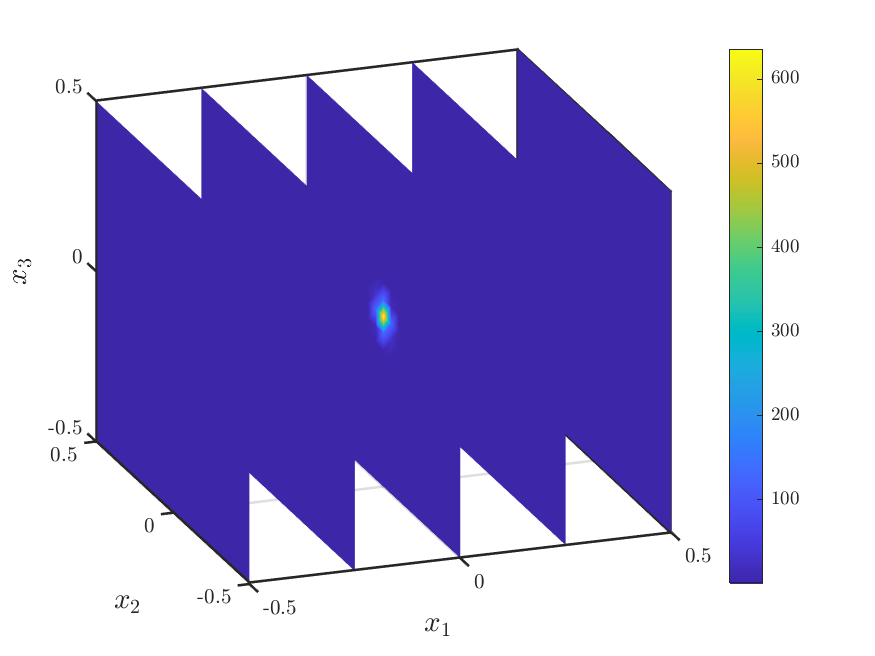}\\
	\subfloat[ $u(\cdot,t)$]{\includegraphics[width=0.32\textwidth]{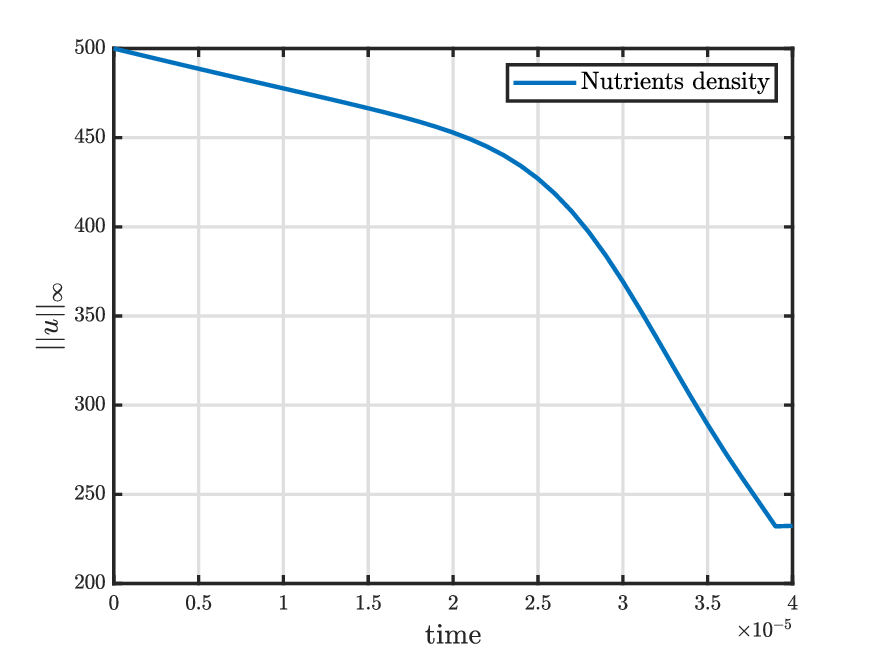}}
	\subfloat[ $v(\cdot,t)$]{\includegraphics[width=0.32\textwidth]{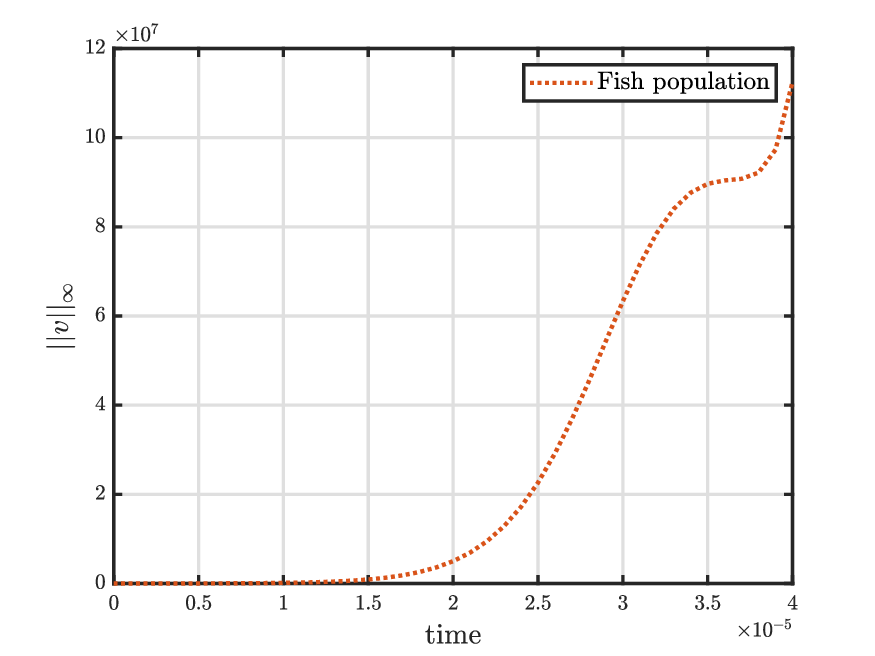}}
	\subfloat[ $w(\cdot,t)$]{\includegraphics[width=0.32\textwidth]{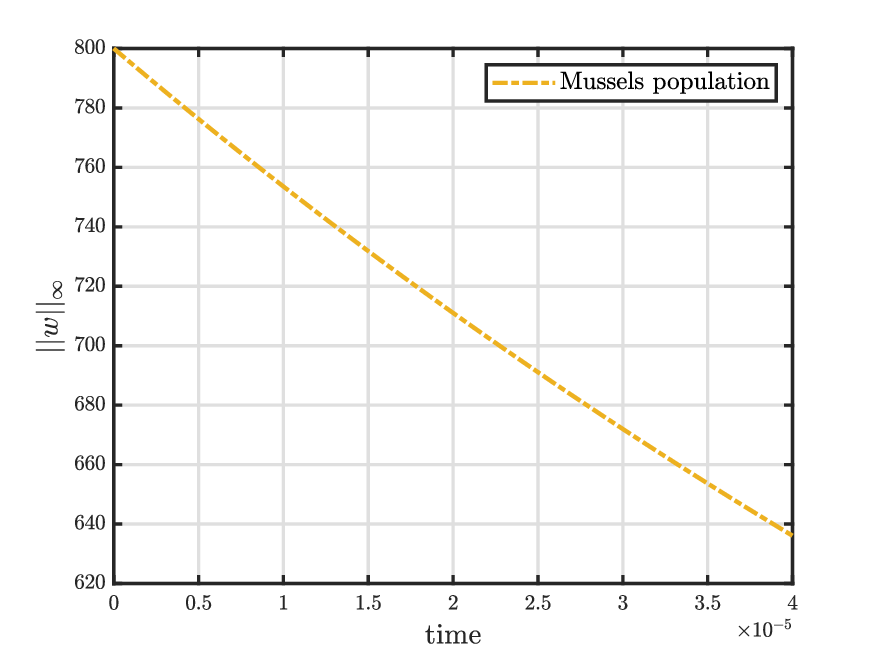}}
	\caption{Graphical representation of the evolution for unknowns at $t=4\times 10^{-5}$.}
	\label{fig.5.6}
\end{figure}
The numerical approximations of the solution components $(u,v,w)$ are presented in Figure~\ref{fig.5.5}-\ref{fig.5.6}. The figure illustrates the temporal evolution of the solution profiles together with the corresponding evolution of their maximum values over the cubic domain $\Omega$. In particular, the numerical results reveal a rapid and uncontrolled growth of the component $v$ as $t$ approaches the critical time $T_{\max}=4\times10^{-5}$.

More precisely, the quantity $\|v(\cdot,t)\|_{L^\infty(\Omega)}$ attains a significantly large value near $t=T_{\max}$, as shown in Figure~ \ref{fig.5.6} B. Furthermore, in agreement with the theoretical estimates \eqref{l2.3.1} and \eqref{l2.3.2}, Figures~ \ref{fig.5.6} A and~C indicate that, for the parameter choice given in \eqref{5.2}, the corresponding upper bounds remain uniformly bounded by
\begin{align*}
K_1=\max\left\{	\|u_0\|_{L^\infty(\Omega)}, \frac{M}{\alpha}
	\right\}
	=500\leq \frac{\delta_2}{\sigma_2}, \quad 
K_2=\|w_0\|_{L^\infty(\Omega)}=800.
\end{align*}
These numerical observations provide additional evidence for the emergence of blow-up behaviour in the system and offer further insight into the underlying dynamics responsible for the rapid growth of the solution component $v$.

\section*{Acknowledgment}
GS and JS thank the Anusandhan National Research Foundation (ANRF), formerly Science and Engineering Research Board (SERB), Govt. of India for their support through Core Research Grant (CRG/2023/001483) and ARG Matrics program (ARGM/2025/000442/MTR), during this work.

\end{document}